\tikzset{>=latex}
\newtheoremstyle{def}
     {10pt}
     {10pt}
     {}
     {}
     {\rmfamily\bfseries\upshape}
     {.}
     {.5em}
     {}
 \theoremstyle{def}
 \newtheorem{definition}{Definition}[subsection]
\newtheorem{remark}[definition]{Remark}
\newtheorem{example}[definition]{Example}
\newtheoremstyle{thm}
     {20pt}
     {10pt}
     {\it}
     {}
     {\rmfamily\bfseries\upshape}
     {.}
     {.5em}
     {}
\theoremstyle{thm}
\newtheorem{theorem}[definition]{Theorem}
\newtheorem{lemma}[definition]{Lemma}
\newtheorem{proposition}[definition]{Proposition}
\newtheorem{corollary}[definition]{Corollary}
\newtheorem{principle}[definition]{Principle}
\DeclareMathOperator{\Aut}{Aut}
\DeclareMathOperator{\im}{im}
\DeclareMathOperator{\Spec}{Spec}
\DeclareMathOperator{\Hom}{Hom}
\DeclareMathOperator{\id}{id}
\DeclareMathOperator{\pr}{pr}
\DeclareMathOperator{\Pic}{Pic}
\DeclareMathOperator{\SL}{SL}
\DeclareMathOperator{\Mp}{Mp}
\DeclareMathOperator{\Sp}{Sp}
\DeclareMathOperator{\GL}{GL}
\newcommand{\smalltwobytwo}[4]{
\bigl( \begin{smallmatrix} 
  #1 & #2\\
  #3 & #4 
\end{smallmatrix} \bigr)}
\newcommand{\ppair}[2]{
\left\langle #1,#2  \right\rangle}
\newcommand{\un}[1]{\underline{#1}}
\newcommand{\cF}{\mathcal{F}}
\newcommand{\cJ}{\mathcal{J}}
\newcommand{\cK}{\mathcal{K}}
\newcommand{\cL}{\mathcal{L}}
\newcommand{\cM}{\mathcal{M}}
\newcommand{\cO}{\mathcal{O}}
\newcommand{\cV}{\mathcal{V}}
\newcommand{\cW}{\mathcal{W}}
\newcommand{\sA}{\mathscr{A}}
\newcommand{\sE}{\mathscr{E}}
\newcommand{\sM}{\mathscr{M}}
\newcommand{\sX}{\mathscr{X}}
\newcommand{\CC}{\mathbb{C}}
\newcommand{\GG}{\mathbb{G}}
\newcommand{\ZZ}{\mathbb{Z}}
\def\@seccntformat#1{%
  \protect\textup{\protect\@secnumfont
    \ifnum\pdfstrcmp{subsection}{#1}=0 \bfseries\fi
    \csname the#1\endcsname
    \protect\@secnumpunct
  }%
}  
\numberwithin{equation}{section}
\DeclareMathOperator{\mf}{\underline{\omega}}
\DeclareMathOperator{\Des}{Des}
\tikzset{>=latex}
\begin{document}

\title[Transformation laws]{The transformation laws of algebraic theta functions}

\author{Luca Candelori}
\email{lcandelori@lsu.edu}

\begin{abstract}
We give a comprehensive treatment of the transformation laws of theta functions from an algebro-geometric perspective, that is, in terms of moduli of abelian schemes. This is accomplished by introducing geometric notions of theta-descent structures, metaplectic stacks, and bundles of half-forms, whose analytic incarnations underlie different aspects of the  classical transformation laws. As an application, we lay the foundations for a geometric theory of modular forms of half-integral weight and, more generally, for modular forms taking values in the Weil representation. We discuss further applications to the algebraic theory of Jacobi forms and to the theory of determinant bundles of abelian schemes.   
\end{abstract}
\maketitle

\section{Introduction}

Classical theta functions are viewed geometrically as coordinates of canonical embeddings of a $g$-dimensional complex torus  into projective space, giving the torus the structure of an abelian variety.  The set of complex structures on the torus can be parametrized by a matrix $\tau$ in Siegel's upper half-space $\mathfrak{h}_g$. As $\Sp_{2g}(\ZZ)$ acts on $\mathfrak{h}_g$, the theta functions undergo certain transformations with respect to $\tau$: in particular, theta functions are {\em modular forms of half-integral weight} on the {\em metaplectic group $\Mp_{2g}(\ZZ)$}, the unique non-trivial double cover of $\Sp_{2g}(\ZZ)$. These `transformation laws' can be computed analytically, and their formulas are well-known. However, (paraphrasing Andr\'{e} Weil \cite{Weil}) the role of the metaplectic group as a `deus-ex-machina' underlying the transformation laws is still shrouded in mystery, particularly from a geometric perspective. The purpose of this article is to expose the mechanisms of the transformation laws by working algebraically, thus replacing the classical theta functions with their algebraic avatars. In fact, we forget completely about the functions themselves, and instead interpret the transformation laws as canonical isomorphisms between vector bundles over moduli stacks of abelian schemes. 

Our work is based on the theory of algebraic theta functions, as first laid out by David Mumford in the series of papers ``{\em On the equations defining abelian varieties}" \cite{Mumford:EqAbv1}, \cite{Mumford:EqAbv2}, \cite{Mumford:EqAbv3}. Indeed, it was clear to Mumford that his theory could be employed to `explain' the transformation laws: 

\begin{quote}``There are several interesting topics which I have not gone into in this paper, but which can be investigated in the same spirit: for example, [...] a discussion of the transformation theory of theta-functions..." (D. Mumford, \cite{Mumford:EqAbv1}, p.287).
\end{quote}

The present work can thus be viewed as a natural addendum to \cite{Mumford:EqAbv1}, \cite{Mumford:EqAbv2}, \cite{Mumford:EqAbv3}, and we have made an extensive effort to keep the terminology and notation as close as possible to those original papers. We do however make extensive use of the language of algebraic stacks, whose theory has been thoroughly developed in the intervening years. 

We begin in Section \ref{section:Theta-Descent} by reviewing Mumford's theory of theta structures (\cite{Mumford:EqAbv1},\cite{Mumford:EqAbv2}), and introduce a variation of it which we call `theta-descent structures'. These types of level structures on abelian schemes are strictly weaker than theta structures, in the same sense as, for example, $\Gamma_0(N)$-structures on elliptic curves are weaker than full level $N$ structures. Their purpose is to control the descent of line bundles on abelian schemes under isogenies, as opposed to `full' theta structures, which control the representation theory of theta groups. 

Out of the theory of theta-descent structures there emerges the theory of modular forms of half-integral weight, as  explained in Section \ref{section:metaplecticStacks}. In particular, we combine theta-descent with our earlier work on theta multiplier bundles (\cite{C1}) to obtain an algebraic theory of modular forms of half-integral weight, in the sense of Shimura \cite{Shimura}, and natural generalizations to higher type and degree (Def. \ref{definition:GeneralModForms}). This application was originally motivated by the work of Nick Ramsey (\cite{Ramsey}), who has studied algebraic and $p$-adic analogs of modular forms of half-integral weight in the case $g=1$ and type $\delta = (2)$ (the `Shimura' modular forms of \cite{Shimura}, Def. \ref{definition:ShimuraModularForm}). Ramsey's modular forms are defined as sections of line bundles corresponding to the divisors of theta functions. Our approach is different in that it is more general and it is purely modular, i.e. the relevant line bundles can be constructed directly out of the moduli problem of theta-descent, without requiring theta functions or their divisors. On the other hand, Ramsey's theory includes a discussion of compactifications and Hecke operators (as well as beautiful $p$-adic applications) which our theory currently lacks. In any case, our results suggest the existence of yet-to-be defined algebraic structures which could lead to progress in studying the Fourier coefficients of modular forms of half-integral weight. Section \ref{section:metaplecticStacks} also contains the construction of the main technical tools that we employ, namely the theory of metaplectic stacks and a review of theta multiplier bundles (\cite{C1}), leading up to the definition of the bundle of half-forms (Def. \ref{definition:half-forms}). All these constructions have been inspired by their analogs in geometric quantization theory, though for brevity we have omitted any computation linking the two explicitly.   

Section \ref{section:WeilBundles} contains perhaps the most important result of this article (Thm. \ref{theorem:IdealTheorem}), giving a canonical, algebraic isomorphism $\iota_{\rm{alg}}$ between two vector bundles over the moduli stack of abelian schemes of dimension $g$ with a polarization of even type $\delta$. On one side of the isomorphism we have the {\em Weil bundle} $\cW_{g,\delta}$, whose analytic incarnation is the local system defined by the Weil representation of \cite{Weil}; on the other side we have the bundle $\cJ^{\vee}_{g,\delta}$, whose analytic sections are the vectors of theta constants (theta-nulls). We prove that over the analytic category the algebraic isomorphism $\iota_{\rm{alg}}$ of Thm. \ref{theorem:IdealTheorem}, tensored with $\CC$, essentially coincides with the isomorphism $\iota_{\rm{an}}$ given by the transformation laws of theta functions, and thus can be viewed as a rather precise algebraic analog for the transformation laws. The sections of the Weil bundle are also interesting on their own, and provide new, algebraic analogs of {\em vector-valued} modular forms taking values in the Weil representation (Def. \ref{definition:vvaluedModularForms}), in the sense of Borcherds (\cite{Borcherds}) and others. At the end of Section \ref{section:WeilBundles}, we also briefly discuss possible applications of Thm. \ref{theorem:IdealTheorem} to the theory of Jacobi forms and the theory of determinant bundles of abelian schemes.   

At the end of every section we have included an `analytic theory' sub-section where we compute our algebraic constructions explicitly over the category of analytic spaces. In some cases, we recover well-known constructions associated to the classical transformation laws of theta functions (e.g. metaplectic stacks correspond to quotients of Siegel's upper half-space by metaplectic groups, theta multiplier bundles correspond to the characters of theta groups appearing in the transformation laws, half-forms correspond to modular forms of half-integral weight, etc.). In others, such as for the theory of theta-descent structures, our analytic constructions seem new. For simplicity, these analytic computations all take place in the case $g=1$ (i.e. the case of elliptic curves), but entirely analogous computations can be carried out in higher degree.  

Throughout this work, we have restricted our attention to totally symmetric line bundles over abelian varieties, in the same spirit as \cite{Mumford:EqAbv1}, \cite{Mumford:EqAbv2}, \cite{Mumford:EqAbv3}. Extending the theory to the more general case of symmetric line bundles should not present any difficulty. A more tantalizing topic to investigate would be the extension to the case of {\em non-degenerate} line bundles, i.e. those line bundles whose theta group is an extension of a finite flat group scheme. In this case, Thm. \ref{theorem:IdealTheorem} should still hold with the right-hand side replaced by a higher derived direct image. It would then be interesting to investigate the algebraic nature of these `generalized' transformation laws and the corresponding theory of modular forms of half-integral weight, whose definition can no longer be given in terms of classical theta functions.

\subsection{Notation}
Throughout, we work over the category of schemes $S$ which are separated and locally noetherian, and we endow this category with the (big) \'{e}tale topology. If $S$ is a scheme and $G$ is a group, we denote by $\un{G}$ the constant \'{e}tale sheaf obtained by sheafification of the functor on $S$-schemes given by $T\mapsto G$. In terms of notation, we denote schemes by capital letters (e.g. $A,S,T$...), algebraic stacks by scripts (e.g. $\sA, \sX, \sM$...) and sheaves of modules by calligraphic letter (e.g. $\cL, \cV, \cF$...). For a locally free $\cO_S$-module $\cF$, we denote its dual in the category of locally free sheaves by $\cF^{\vee} := Hom_{\cO_S}(\cF,\cO_S)$, and a similar notation will be employed when the base scheme is replaced by an algebraic stack.

\subsection{Acknowledgments}

This work was initiated while I was a graduate student of Henri Darmon at McGill University, thus I would like to thank him for his support. At McGill, I would also like to thank Eyal Goren and Niky Kamran for their interest and helpful insights to earlier versions of this work. I would also like to thank William J. Hoffman, Ben Howard, Ling Long and Nick Ramsey for many clarifying conversations about the topic.   

\section{Theta- and Theta-Descent structures}
\label{section:Theta-Descent}

\subsection{} Let $\pi:A\rightarrow S$ be an abelian scheme of dimension $g$ with identity section $e:S\rightarrow A$. Let $\cL$ be a relatively ample, invertible $\cO_A$-module which has been normalized at the identity, i.e. it is equipped with a fixed $\cO_S$-module isomorphism
$
e^*\cL \simeq \cO_S  
$
(in the following, any isomorphism of normalized invertible sheaves is assumed to respect the normalizations, so that, in particular, a normalized invertible sheaf has no automorphisms). For $P$ a scheme-valued point of $A$, let $T_P$ be the translation-by-$P$ morphism and consider the canonical morphism
$$
\phi_{\cL}: A \longrightarrow A^{t} , \quad 
 P \longmapsto T_P^*\cL\otimes\cL^{-1} 
$$
to the dual abelian scheme induced by $\cL$. The group scheme 
$$
K(\cL):= \ker \phi_{\cL}
$$
is commutative, finite flat over $S$ and it is canonically endowed with a non-degenerate symplectic pairing
$$
e_{\cL}: K(\cL)\times K(\cL) \longrightarrow \GG_m. 
$$
In this case $\mathrm{rk}(K(\cL)) = d^2$, for some integer $d>0$ called the {\em degree} of $\cL$. $K(\cL)$ is then  finite \'{e}tale over $S[1/d]$ and  \'{e}tale locally we may find an isomorphism 
$$
K(\cL) \simeq \left(\prod_{i=1}^g \underline{\ZZ/d_i\ZZ}\right)\times \left(\prod_{i=1}^g \mu_{d_i}\right)
$$ 
for some sequence of integers $\delta := (d_1,\ldots,d_g)$ with $d_1\mid d_2 \mid \cdots \mid d_g$ and $d = d_1\cdots d_g$
(\cite{Mumford:EqAbv1} \S 1, \cite{Mumford:EqAbv2} \S 6). The sequence $\delta$ is called the {\em type} of $\cL$. 

For any scheme $T\rightarrow S$, let $\Aut_{A\times_S T}(\cL/A)$ be the group of automorphisms of $\cL$ covering a translation morphism of $A\times_S T$. The functor $(T\rightarrow S)\mapsto \Aut_{A\times_S T}(\cL/A)$ is represented by group scheme
$
G(\cL),
$
the {\em theta group} of $\cL$, which is a central extension 
\begin{equation}
\label{eqn:thetaGroupExactSequence}
1 \rightarrow \GG_m \rightarrow G(\cL) \stackrel{p}\rightarrow K(\cL) \rightarrow 1,
\end{equation}
with commutator pairing equal to $e_{\cL}$ (\cite{Mumford:EqAbv2} \S 6, \cite{AbelianV} Thm. 23.1). The scheme-valued points of $G(\cL)$ can be represented by pairs $(z,\varphi)$, where $\varphi: T^*_z\cL \stackrel{\simeq}\rightarrow \cL$.

 The key feature of theta groups is that they control the descent of invertible sheaves under isogenies of abelian schemes: we briefly recall this result, which will be refined by Thm. \ref{theorem:DescentForSymmetricLineBundles} below. 

\begin{theorem}[\cite{AbelianV}, Thm. 23.2, \cite{MoretBailly:PinceauxAbv} Prop. VI.1.2]
\label{theorem:DescentForLineBundles}
Let $A\rightarrow S[1/d]$ be an abelian scheme and let $\cL$ be a relatively ample, normalized invertible sheaf of degree $d$. Let $\phi:A\rightarrow B$ be an isogeny of abelian schemes, and let $H:= \ker f$. Then there is an equivalence between: 
\begin{itemize}
\item[(a)] The category of pairs $(\cM, f)$ of  relatively ample, normalized invertible sheaves $\cM$ over $B$ such that $f: \phi^*\cM \stackrel{\simeq}\rightarrow \cL$.
\item[(b)] The discrete category of group homomorphisms $\sigma: H \rightarrow G(\cL)$ making the following diagram 
$$
\begin{tikzcd}
  &H \arrow{dl}{\sigma} \arrow[hook]{dr} &   \\
G(\cL) \arrow{r}{p} & K(\cL) \arrow[hook]{r}& A
\end{tikzcd}
$$
commutative. 
\end{itemize}
\end{theorem}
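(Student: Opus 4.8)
The plan is to deduce the statement from faithfully flat descent along $\phi$, with the theta group $G(\cL)$ serving as the universal recipient for equivariant structures on $\cL$. Since $\phi$ is an isogeny it is finite locally free and surjective, hence an fppf cover, and it exhibits $A$ as an fppf $H$-torsor over $B$ with $B = A/H$. Thus, forgetting normalizations and relative ampleness for the moment, giving a pair $(\cM,f)$ with $f:\phi^*\cM\xrightarrow{\sim}\cL$ is the same as giving a descent datum on $\cL$ relative to $\phi$. Because the two maps $H\times_S A\rightrightarrows A$ occurring in the descent are the second projection and the translation action $(h,x)\mapsto T_h(x)$, such a descent datum amounts to a compatible family of isomorphisms $T_h^*\cL\xrightarrow{\sim}\cL$ (equivalently, their inverses) indexed by the points $h$ of $H$, satisfying a cocycle identity over $H\times_S H\times_S A$.

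Next I would observe that each such isomorphism $\varphi_h : T_h^*\cL\xrightarrow{\sim}\cL$ is, by definition, a point of $G(\cL)$ lying over $h$; in particular it forces $h\in K(\cL)$, so the relevant $H$ must sit inside $K(\cL)$ as a subgroup scheme of $A$, and since the image of a would-be lift is commutative (as $H$ is) it must be isotropic for $e_{\cL}$ — when either condition fails both categories in the statement are empty, consistently. The point is then that the cocycle identity for the descent datum translates \emph{exactly} into the statement that $h\mapsto\varphi_h$ is a homomorphism of $S$-group schemes $\sigma:H\to G(\cL)$, whose composite with $p$ is the given inclusion $H\hookrightarrow K(\cL)$; conversely any such $\sigma$ reassembles into a descent datum. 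This gives the bijection at the level of objects.

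Finally I would deal with normalizations, ampleness, and morphisms. An automorphism of a relatively ample invertible sheaf on an abelian scheme over $S$ comes from a global unit on $S$, hence is trivial once it is required to preserve a normalization at the identity section; so category (a) is automatically discrete, and fixing the normalization of $\cL$ likewise rigidifies the descent data, matching the discreteness of (b). To match the normalizations on the nose one uses $\phi\circ e_A = e_B$: a normalization of $e_B^*\cM$ pulls back through $e_A$ to one of $e_A^*\cL$, and since $\sigma$ carries the identity section of $H$ to the identity $(0,\id_\cL)$ of $G(\cL)$ the descent datum is trivial along $e_A$, so the normalizations of $\cM$ and $\cL$ determine each other; the residual freedom of altering $\sigma$ by a character $H\to\GG_m$ matches $\ker(\phi^*\colon\Pic B\to\Pic A)\cong\Hom(H,\GG_m)$ on side (a). For relative ampleness, if $\cM$ denotes the descended sheaf then $\phi^*\cM\cong\cL$ is relatively ample and $\phi$ is finite surjective, so $\cM$ is relatively ample by descent of ampleness along finite surjective morphisms. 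I expect the only genuinely delicate point to be the bookkeeping in the middle step: keeping straight the direction of the equivariant isomorphisms and checking that the descent cocycle identity is precisely the homomorphism identity in the \emph{nonabelian} group $G(\cL)$ — not merely up to the central $\GG_m$ of \eqref{eqn:thetaGroupExactSequence} — while everything else is formal descent theory.
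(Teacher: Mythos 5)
Your argument is correct and is essentially the paper's: the forward functor is the same assignment $h\mapsto\bigl(h,\,f\circ T_h^*f^{-1}\bigr)$, and the quasi-inverse is descent along the isogeny, with the splitting $\sigma$ playing the role of the descent datum. The only presentational difference is that you invoke abstract fppf descent for the $H$-torsor $A\to B$ where the paper realizes the descended sheaf concretely as the invariants $(\phi_*\cL)^{\sigma H}$ and cites Mumford for $\phi^*(\phi_*\cL)^{\sigma H}\simeq\cL$; your extra care with discreteness, normalizations, and descent of relative ampleness fills in bookkeeping the paper leaves implicit.
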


\begin{proof}
Suppose first that a pair $(\cM,f)$ is given with $f: \phi^*\cM \stackrel{\simeq}\rightarrow \cL$. For any $h\in H\subseteq A$, we have 
$$
T_h^*\cL \stackrel{T_h^*f^{-1}}\rightarrow T_h^*\phi^*\cM = (\phi\circ T_h)^*\cM = \phi^*\cM \stackrel{f}\rightarrow \cL,
$$
thus $(h,f\circ T_h^*f^{-1}) \in G(\cL)$
is an object of category (b), and we have defined a functor from category (a) to category (b). Since both categories are rigid groupoids, it suffices to show that this functor is a bijection at the level of equivalence classes of objects. Therefore, suppose that a an object $(H,\sigma)$ of category (b) is given, so that we have an action of $\sigma H$ on $\cL$ covering the translation action of $H$ on $A$. The sheaf $\phi_*\cL$, which is locally free of rank $\mathrm{rk}\, H$, is then endowed with an action of $\sigma H$. Define $\cM:= (\phi_*\cL)^{\sigma H}$, the invariants under this action, and let $f$ be the canonical isomorphism $f: \phi^*(\phi_*\cL)^{\sigma H}\stackrel{\simeq}\rightarrow \cL$ (\cite{AbelianV}, Prop. II.7.2). 
\end{proof}

\begin{definition}
An object $(H,\sigma)$ as in Thm. \ref{theorem:DescentForLineBundles} (b) is called a {\em splitting} of the theta group $G(\cL)$ over $H$. 
\end{definition}

Let $H:=\ker \phi$, with $\phi:A\rightarrow B=A/H$ as in Thm. \ref{theorem:DescentForLineBundles}, and suppose a splitting $\sigma$ is given. A natural question to ask is to express the theta group $G(\cM)$ of the descended invertible sheaf over $B$ in terms of the pair $(H,\sigma)$.  Note first that $H \subseteq K(\cL)$ must necessarily be isotropic with respect to the pairing $e_{\cL}$. By \cite{AbelianV}, Lemma 23.2, we then have 
$$
K(\cM)= H^{\perp}/H, \quad G(\cM) =  p^{-1}(H^{\perp})/\sigma(H).
$$
In particular if $H^{\perp} = H$ (i.e. $H$ is {\em maximal} isotropic) then $\deg \cM = 1$, so that $\cM$ induces a principal polarization $\phi_{\cM}:B\rightarrow B^t$.  

\begin{definition}
A splitting $(H,\sigma)$ of $G(\cL)$ over $H$ is {\em maximal}  whenever $H^{\perp} = H$.
\end{definition}

\subsection{}
\label{subsection:symmetricLineBundles}
Suppose next that $S$ is a scheme where $1/2\in \cO_S$ and suppose that the (relatively ample, normalized) invertible sheaf $\cL$ is {\em symmetric}. This means that there is a (unique) isomorphism of normalized invertible $\cO_A$-modules
$
\iota_{\cL}: [-1]^*\cL \stackrel{\simeq}\longrightarrow \cL,
$
where $[-1]:A \rightarrow A$ is the inversion morphism. The map $\iota_{\cL}$ then induces an order two automorphism 
$$
\delta_{-1}: G(\cL) \stackrel{\iota^{-1}_{\cL}}\longrightarrow [-1]^*G(\cL)\longrightarrow G(\cL),
$$
restricting to the identity on $\GG_m$. In terms of points $(z,\varphi) \in G(\cL)$, we have (\cite{Mumford:EqAbv1}, \S 2)
\begin{equation}
\label{equation:delta-1}
\delta_{-1}(z,\varphi) = (-z,(T^*_{-x}\iota_{\cL})\circ [-1]^*\varphi\circ \iota_{\cL}^{-1}).
\end{equation}
We now refine Thm. \ref{theorem:DescentForLineBundles} by taking into account the additional automorphism $\delta_{-1}$, which controls the descent of symmetric invertible sheaves. 

\begin{theorem}
\label{theorem:DescentForSymmetricLineBundles}
Let $(A\rightarrow S[1/2d],\cL)$ be as in Thm. \ref{theorem:DescentForLineBundles} and suppose additionally that $\cL$ is symmetric. Let $f:A\rightarrow B$ be an isogeny of abelian schemes, and let $H:= \ker f$.  Then there is an equivalence between:
\begin{itemize}
\item[(a)] The category of pairs $(\cM, f)$ of symmetric, relatively ample, normalized invertible sheaves $\cM$ over $B$ such that $f: \phi^*\cM \stackrel{\simeq}\rightarrow \cL$.
\item[(b)] The category of splittings $(H,\sigma)$ of $G(\cL)$ over $H$ such that the following diagram 
$$
\begin{tikzcd}
G(\cL) \arrow{rr}{\delta_{-1}} 
& & G(\cL)  \\
&H \arrow{ur}{\sigma\circ[-1]} \arrow{ul}{\sigma} &
\end{tikzcd}
$$
is commutative.
\end{itemize}
\end{theorem}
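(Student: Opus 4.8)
The plan is to bootstrap off Theorem~\ref{theorem:DescentForLineBundles}. Both categories in the present statement are full subcategories of the corresponding categories there: symmetric relatively ample normalized sheaves on $B$ form a full subcategory of all such sheaves, and a splitting making the displayed diagram commute is in particular a splitting. Since the equivalence of Theorem~\ref{theorem:DescentForLineBundles} is already available and both categories here are again rigid groupoids (a full subcategory of a rigid groupoid being one), it suffices to show that under that equivalence an object $(\cM, f)$ has $\cM$ symmetric if and only if the associated splitting $\sigma$ satisfies $\delta_{-1}\circ\sigma = \sigma\circ[-1]$, i.e.\ $\delta_{-1}(\sigma(h)) = \sigma(-h)$ for every point $h$ of $H$; the equivalence then restricts to the subcategories. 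Recall for this that, in the notation of the proof of Theorem~\ref{theorem:DescentForLineBundles}, $\sigma(h) = (h,\, f\circ T_h^*(f^{-1}))$ and the inverse construction is $\cM = (\phi_*\cL)^{\sigma H}$; throughout one uses, as in \S\ref{subsection:symmetricLineBundles}, that $1/2\in\cO_S$, so that $\delta_{-1}$ is defined and of order two.

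For the forward direction I would argue as follows. Let $\iota_\cM\colon[-1]^*\cM\stackrel{\simeq}\rightarrow\cM$ be the unique symmetry isomorphism of the symmetric sheaf $\cM$. Since $\phi$ is a homomorphism, $[-1]\circ\phi = \phi\circ[-1]$, hence $\phi^*[-1]^*\cM = [-1]^*\phi^*\cM$, and the composite
$$
[-1]^*\cL \xrightarrow{[-1]^*(f^{-1})} [-1]^*\phi^*\cM = \phi^*[-1]^*\cM \xrightarrow{\phi^*\iota_\cM} \phi^*\cM \xrightarrow{f} \cL
$$
is an isomorphism of normalized invertible sheaves on $A$ (the normalizations composing to the identity, by $[-1]\circ e = e$), so by uniqueness of $\iota_\cL$ it equals $\iota_\cL$. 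One then substitutes this expression for $\iota_\cL$ into formula \eqref{equation:delta-1} for $\delta_{-1}(\sigma(h))$ and simplifies, using $\phi\circ T_h = T_h\circ\phi$ together with $\phi\circ T_{-h}=\phi$, to obtain $\delta_{-1}(\sigma(h)) = (-h,\, f\circ T_{-h}^*(f^{-1})) = \sigma(-h)$.

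For the converse, given $\sigma$ with $\delta_{-1}\circ\sigma = \sigma\circ[-1]$, I would form $\cM = (\phi_*\cL)^{\sigma H}$ with its canonical isomorphism $f\colon\phi^*\cM\stackrel{\simeq}\rightarrow\cL$ and produce a symmetry isomorphism for $\cM$ directly. The symmetry isomorphism $\iota_\cL$ of $\cL$, together with $[-1]\circ\phi = \phi\circ[-1]$, induces an isomorphism $[-1]^*(\phi_*\cL) = \phi_*([-1]^*\cL)\xrightarrow{\phi_*\iota_\cL}\phi_*\cL$; since $\delta_{-1}$ is implemented by conjugation by $\iota_\cL$, the hypothesis on $\sigma$ says precisely that this isomorphism intertwines the action of $\sigma(h)$ on $[-1]^*(\phi_*\cL)$ with the action of $\sigma(-h)$ on $\phi_*\cL$, and since $h\mapsto-h$ is a bijection of $H$ it carries $\sigma H$-invariants to $\sigma H$-invariants, descending to $\iota_\cM\colon[-1]^*\cM\stackrel{\simeq}\rightarrow\cM$. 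It remains to check that $\iota_\cM$ is normalized (again from $[-1]\circ e=e$) and squares to the identity (inherited from $\iota_\cL^2=\id$), so that $\cM$ is symmetric, and that $f$ carries $\phi^*\iota_\cM$ to $\iota_\cL$. The two assignments are mutually inverse because they are so after forgetting the symmetry data, by Theorem~\ref{theorem:DescentForLineBundles}.

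The step I expect to be the main obstacle is the computation in the forward direction: unwinding \eqref{equation:delta-1} with $\iota_\cL = f\circ\phi^*(\iota_\cM)\circ[-1]^*(f^{-1})$ substituted in, and tracking all the pullbacks along $T_h$, $T_{-h}$ and $[-1]$ together with the compatibilities of $\phi^*$ and $\phi_*$ with them, so as to land exactly on $\sigma(-h)$. No new idea is needed beyond this, but the bookkeeping with the normalizations is delicate and is where one could slip.
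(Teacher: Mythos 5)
Your proposal is correct and follows essentially the same route as the paper: the forward direction is the explicit computation with \eqref{equation:delta-1} (which the paper only asserts, after identifying $\iota_{\cL}$ with $f\circ\phi^*\iota_{\cM}\circ[-1]^*(f^{-1})$ as you do), and the converse uses the same canonical isomorphism $\phi_*[-1]_A^*\cL\simeq[-1]_B^*\phi_*\cL$ (the paper's Lemma \ref{lemma:symmetricDescentLemma}) together with the equivariance hypothesis to descend $\iota_{\cL}$ to a symmetry isomorphism of $\cM=(\phi_*\cL)^{\sigma H}$. The only superfluous step is checking that $\iota_{\cM}$ squares to the identity: a normalized invertible sheaf has no automorphisms, so any normalized isomorphism $[-1]^*\cM\simeq\cM$ already makes $\cM$ symmetric.
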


\begin{proof}
As in Thm.  \ref{theorem:DescentForLineBundles}, it suffices to show a bijection at the level of objects. First, given $(\cM,f)$ note that the splitting $\sigma: H \rightarrow G(\cL)$ given by $(h, T^*_h f^{-1}\circ f)$ lies in category (b), as follows from by explicit computation using \eqref{equation:delta-1}. Conversely, suppose that a splitting $(H,\sigma)$ in (b) is given, and let $\cM:= \phi_*\cL^{\sigma H}$, $f: \phi^*(\phi_*\cL)^{\sigma H}\stackrel{\simeq}\rightarrow \cL$ as in Thm. \ref{theorem:DescentForLineBundles}. We first need:

\begin{lemma}
\label{lemma:symmetricDescentLemma}
There is a canonical isomorphism $\phi_*[-1]_A^*\cL \simeq [-1]_B^*\phi_*\cL$. 
\end{lemma}

\begin{proof}
Note that $\phi\circ [-1]_A = [-1]_B\circ\phi$, since the morphism $\phi$ must respect the structure of abelian schemes. By functoriality there is a corresponding canonical isomorphism of functors  
$
\phi_*[-1]_{A*} \simeq [-1]_{B*}\phi_*.
$
Applying this isomorphism to $[-1]_A^*\cL$ we get
$$
\phi_*[-1]_{A*}[-1]^*_A\cL  = \phi_*\cL\simeq [-1]_{B*}\phi_*[-1]_A^*\cL,
$$
using the fact that the isogeny $[-1]_A$ is of degree 1. Finally, we may pull back both sides by $[-1]_B^*$ and again use the fact that $ [-1]_B^*[-1]_{B*} = 1$ to get the desired result. 
\end{proof}

{\em End of the proof of Thm. \ref{theorem:DescentForSymmetricLineBundles}.} Applying invariants to the isomorphism of Lemma \ref{lemma:symmetricDescentLemma}, we get a canonical isomorphism 
$$
\phi_*([-1]_A^*\cL)^{\sigma[-1]_A H} = [-1]^*_B\phi_*(\cL)^{\sigma H}.
$$
But by assumption, the action of $\sigma[-1]_A H$ on $\phi_*[-1]^*_A\cL$ is equivariant with the action of $\sigma H$ on $\phi_*\cL$ under the isomorphism $\iota_{\cL}: [-1]^*\cL \stackrel{\simeq}\rightarrow \cL$, so that $\phi_*([-1]_A^*\cL)^{\sigma[-1]_A H}\simeq \phi_*\cL^{\sigma H} = \cM$. Thus we have found the required isomorphism $[-1]_B^*\cM \simeq \cM$.  
\end{proof}

\begin{definition}
\label{definition:symmetricSplitting}
A {\em symmetric splitting} $(H,\sigma)$ is a splitting of $G(\cL)$ over $H$ as in Thm. \ref{theorem:DescentForSymmetricLineBundles} (b).  A {\em maximal} symmetric splitting is a symmetric splitting with $H^{\perp} = H$. 
\end{definition} 

\begin{remark}If $(H,\sigma)$ is a maximal symmetric splitting, the descended invertible sheaf $\cM$ induces a symmetric principal polarization $\phi_{M}:B\rightarrow B^t$. Following standard notation, in this case we denote $\cM$ by $\Theta$. Indeed if $S = \Spec(k)$ with $\bar{k} = k$ and $\mathrm{char}(k) = 0$, the invertible sheaf $\Theta$ corresponds to a classical {\em theta divisor} on the abelian variety $B/k$.   
\end{remark}

The isomorphism $\iota_{\cL}$, restricted to the fixed locus $A[2]$ of $[-1]^*$, is multiplication by $\pm 1$ and thus it defines a function
$$
e^{\cL}_*: A[2] \longrightarrow \mu_2,
$$
the {\em theta characteristic} of $\cL$. This function is quadratic for the symmetric pairing $e_{\cL^2}$ (\cite{Mumford:EqAbv1},\S 2, Cor. 1) and, along with $e_{\cL}$, they completely determine the isomorphism class of $\cL$. Given a symmetric splitting $(H,\sigma)$, the theta characteristic $e_*^{\cM}$ of the descended symmetric invertible sheaf $\cM$ over $B$ can be computed explicitly in terms of $(H,\sigma)$. To do so, consider the homomorphisms (\cite{Mumford:EqAbv1}, \S 2)
\begin{equation}
\begin{aligned}
\label{equation:eta2}
\epsilon_2: G(\cL) \rightarrow G(\cL^{\otimes 2}) , \quad \eta_2: & G(\cL^{\otimes 2}) \rightarrow G(\cL) \\
(z,\varphi) \mapsto (z, \varphi^{\otimes 2})  \quad\quad & (z,\varphi)\mapsto (z,\rho),
\end{aligned}
\end{equation}
where $\rho: T_{2z}^*\cL \stackrel{\simeq}\rightarrow\cL$ is the unique isomorphism such that $[2]^*\rho = \nu\circ \varphi^{\otimes 2} \circ T_z^*\nu^{-1}$ and $\nu: \cL^{\otimes 4} \stackrel{\simeq}\rightarrow [2]^*\cL$ is the isomorphism induced by symmetry (\cite{AbelianV}, Cor. 6.3). Then at the level of points the following diagram of sets 
$$
\begin{tikzcd}
A[2] \arrow{rr}{s}\arrow{dr}{e_*^{\cL}} & & G(\cL^{\otimes 2})[2]\arrow{dl}{\eta_2}  \\
&\mu_2  &
\end{tikzcd}
$$
is commutative, where $s$ is any (set-theoretic) lift of the 2-torsion of $A$ to the 2-torsion of $G(\cL^{\otimes 2})$ (\cite{Mumford:EqAbv1}, Prop. 2.6). Consequently, we may compute $e_*^{\cM}$ on points via the commutative diagram  
$$
\begin{tikzcd}
A/H[2]=B[2] \arrow{rr}{s}\arrow{dr}{e_*^{\cM}} & & G(\cM^{\otimes 2})[2] = p^{-1}_{G(\cL^{\otimes 2})}(\epsilon_2H^{\perp})/\epsilon_2\sigma (H)[2] \arrow{dl}{\eta_2}  \\
&\mu_2.  &
\end{tikzcd}
$$

\subsection{} Given a (relatively ample, normalized) symmetric invertible sheaf $\cL$ over $A$, it is not necessarily the case that a symmetric splitting $(H,\sigma)$ of $G(\cL)$ over $H\subseteq K(\cL)$ exists. To ensure existence, we must impose on $A$ a type of level structure that we call a {\em $\Theta$-descent structure}, a construction based on the theory of theta structures (\cite{Mumford:EqAbv1} \S 1, \cite{Mumford:EqAbv2} \S 6), which we briefly recall. Let $\delta := (d_1,\ldots,d_g)$ be a type with $d = d_1\cdots d_g$ and let
$$
H(\delta):= \prod_{i=1}^g \ZZ/d_i\ZZ,\quad 
K(\delta):= H(\delta)\times H(\delta).
$$
The elements of $K(\delta)$ are denoted by $z = (x,y)$, with $x,y \in H(\delta)$. Let $\{e_1, \ldots, e_{2g}\}$ be the standard basis of $K(\delta)$. Let $\zeta_d \in \mu_d$ be a primitive $d$-th root of unity and define the {\em standard symplectic pairing of type $\delta$} to be the non-degenerate alternating bilinear pairing on $K(\delta)$ determined by
\begin{equation}
\label{equation:standardSymplecticPairing}
\ppair{e_{\nu}}{e_{\mu}}_{\delta} = \left\{ \begin{array}{lc} 
\zeta_d^{-1} &\text{ if } \mu = g + \nu \\
\zeta_d &\text{ if } \nu = g + \mu \\
1 &\text{ otherwise. }  \end{array}   \right.
\end{equation}
If $S$ is a scheme containing $1/d$ and a primitive $d$-th root of unity $\zeta_d \in H^0(S,\cO_S^{\times})$, we may form the constant group scheme $\un{K}(\delta)$ together with the standard symplectic pairing, viewed as a morphism of constant group schemes $\ppair{}{}_{\delta}:\un{K}(\delta)^{\times 2}\rightarrow \mu_d$.  

\begin{definition}
\label{definition:HeisenbergGroupScheme}
The {\em Heisenberg group scheme of type} $\delta = (d_1,\ldots,d_g)$ over $S$ is the flat affine scheme 
$$
G(\delta):= \GG_m\times_S \underline{K}(\delta),
$$ 
with group structure given by 
$
(\lambda_1, x_1,y_1)(\lambda_2, x_2,y_2) = (\lambda_1\lambda_2\,\langle x_1,y_2\rangle_{\delta}  , x_1+x_2,y_1 + y_2).
$
\end{definition}

Note that $G(\delta)$ is a central extension of $\underline{K}(\delta)$ by $\GG_m$ of commutator pairing equal to $\langle,\rangle_{\delta}$. If $\cL$ is of type $\delta$, then (\'{e}tale) locally on $S$ the theta group $G(\cL)$ is  isomorphic as a central extension to $G(\delta)$ (\cite{Mumford:EqAbv1}, \S 1), which motivates the following definition:

\begin{definition}
Let $A\rightarrow S$ be an abelian scheme of dimension $g$ over a scheme $S$ containing $1/d$ and $\zeta_d$, and let $\cL$ be a relatively ample, normalized invertible sheaf over $A$ of type $\delta$. A {\em theta structure} on $(A,\cL)$ is a group scheme isomorphism $$
\beta: G(\cL) \stackrel{\simeq}\rightarrow G(\delta),
$$
fitting in the following commutative diagram 
$$
\begin{tikzcd}
1 \arrow{r} & \GG_m \arrow{r} \arrow{d}{\mathrm{id}} &G(\cL) \arrow{r} \arrow{d}{\beta} & K(\mathcal{L}) \arrow{r} \arrow{d}{\alpha} & 1 \\
1 \arrow{r} & \GG_m \arrow{r} &G(\delta) \arrow{r} & \underline{K}(\delta) \arrow{r} & 1. 
\end{tikzcd}
$$
\end{definition}

\begin{remark}
By a straightforward computation, the induced isomorphism $$\alpha: K(\cL) \stackrel{\simeq}\rightarrow \underline{K}(\delta)$$ is an {\em arithmetic level structure}, i.e. a group scheme isomorphism such that $\alpha^*\ppair{}{}_{\delta} = e_{\cL}$. 
\end{remark}

Consider the \'{e}tale sheaf of groups defined by the functor on $S$-schemes 
$$
\Aut_{\GG_m}G(\delta)(T\rightarrow S):= \{ f:G(\delta)(T)\stackrel{\simeq}\rightarrow G(\delta)(T) \text{ such that } f|_{\GG_m(T)} = \id \}.
$$
By the same arguments as in \cite{BL}, Lemma 6.6.6, we have:

\begin{lemma}
\label{lemma:AutExactSequence}
There is an exact sequence of \'{e}tale sheaves of groups
$$
0 \rightarrow \underline{K}(\delta) \stackrel{i}\rightarrow \Aut_{\GG_m}G(\delta) \stackrel{p}\rightarrow \Sp(\un{K}(\delta),\ppair{}{}_{\delta})\rightarrow 0,
$$
where $i$ is given on points by $i(z) =\{ (\lambda,z') \mapsto (\lambda\,\ppair{z}{z'}_{\delta},z')\}$ and the second map is induced by the projection $p: G(\delta)\rightarrow \underline{K}(\delta)$.
\end{lemma}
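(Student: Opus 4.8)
The plan is to realize $p$ as the map induced on automorphisms by the quotient $G(\delta)\to G(\delta)/\GG_m$, and to verify exactness by two short cocycle computations, following \cite{BL}, Lemma 6.6.6. Write $b(z_1,z_2):=\ppair{x_1}{y_2}_{\delta}$ (for $z_i=(x_i,y_i)\in\underline{K}(\delta)$) for the bilinear $\GG_m$-valued $2$-cocycle defining the multiplication of $G(\delta)$, so that $b(z_1,z_2)\,b(z_2,z_1)^{-1}=\ppair{z_1}{z_2}_{\delta}$. Non-degeneracy of this pairing forces the centre of $G(\delta)$ to be exactly $\GG_m$, so any $f\in\Aut_{\GG_m}G(\delta)(T)$ preserves $\GG_m$ and descends to an automorphism $\bar f$ of $\underline{K}(\delta)=G(\delta)/\GG_m$. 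Since commutators in $G(\delta)$ lie in $\GG_m$ and $f$ is the identity there, $\ppair{\bar f\bar a}{\bar f\bar b}_{\delta}=f([a,b])=[a,b]=\ppair{\bar a}{\bar b}_{\delta}$, whence $\bar f\in\Sp(\underline{K}(\delta),\ppair{}{}_{\delta})$. Setting $p(f):=\bar f$ gives the asserted morphism of étale sheaves, manifestly functorial in $T$.

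For exactness in the middle I would argue as follows. Using $f(\lambda,z)=f(\lambda,0)\,f(1,z)$ together with $f|_{\GG_m}=\id$, any $f$ with $p(f)=\id$ must have the form $f(\lambda,z)=(\lambda\,c(z),z)$ for a morphism $c:\underline{K}(\delta)|_T\to\GG_m|_T$, and the requirement that $f$ be a group homomorphism becomes $c(z_1+z_2)=c(z_1)c(z_2)$, i.e. $c\in\underline{\Hom}(\underline{K}(\delta),\GG_m)(T)$. Since $1/d$ and $\zeta_d$ lie in $\cO_S$, this internal Hom sheaf is finite étale of the same rank $d^2$ as $\underline{K}(\delta)$, and non-degeneracy of $\ppair{}{}_{\delta}$ makes $z_0\mapsto\ppair{z_0}{-}_{\delta}$ an isomorphism $\underline{K}(\delta)\xrightarrow{\sim}\underline{\Hom}(\underline{K}(\delta),\GG_m)$; under it $\ker p$ is precisely the image of $i$, and injectivity of $i$ is again non-degeneracy. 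This settles exactness at both $\underline{K}(\delta)$ and $\Aut_{\GG_m}G(\delta)$.

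The hard part will be surjectivity of $p$. Given $A\in\Sp(\underline{K}(\delta),\ppair{}{}_{\delta})(T)$, the same bookkeeping shows that any lift must be of the form $f(\lambda,z)=(\lambda\,c(z),Az)$, and that $f$ is a homomorphism precisely when
$$
c(z_1)\,c(z_2)\,c(z_1+z_2)^{-1}=b(z_1,z_2)\,b(Az_1,Az_2)^{-1}=:\gamma_A(z_1,z_2).
$$
Now $\gamma_A$ is a ratio of bilinear forms, hence itself bilinear, and it is symmetric because its antisymmetrization equals $\ppair{z_1}{z_2}_{\delta}\,\ppair{Az_1}{Az_2}_{\delta}^{-1}=1$ by symplecticity of $A$. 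The key input is then the standard fact that a symmetric $2$-cocycle on a finite abelian group becomes a coboundary étale-locally: prescribing $c$ on a basis, the obstruction to solving the displayed equation lives in Kummer cohomology and is killed after adjoining finitely many roots of unity — this is exactly the computation of \cite{BL}, Lemma 6.6.6, and is where the hypotheses on $\cO_S$ enter. For such a $c$ the morphism $f(\lambda,z):=(\lambda\,c(z),Az)$ is an element of $\Aut_{\GG_m}G(\delta)(T')$ over a suitable étale cover $T'\to T$ with $p(f)=A$, so $p$ is an epimorphism of étale sheaves. I expect the trivialization of $\gamma_A$ to be the only genuinely delicate point; I would also note that $c$ is pinned down only up to a character of $\underline{K}(\delta)$, i.e. up to $\ker p=\underline{K}(\delta)$, so the lifts $f_A$ cannot be chosen functorially in $A$, and the resulting obstruction cocycle is precisely what produces the metaplectic extension studied in Section \ref{section:metaplecticStacks}.
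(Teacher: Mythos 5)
Your proof is correct and takes essentially the same approach as the paper, which defers to the argument of \cite{BL}, Lemma 6.6.6: the kernel of $p$ is identified with the character group of $\un{K}(\delta)$ and hence with $\un{K}(\delta)$ itself via non-degeneracy of $\ppair{}{}_{\delta}$, and surjectivity is obtained by constructing, \'etale-locally, a semicharacter $c$ for the symmetric bilinear form $\gamma_A$ (using that $d$ is invertible and $\zeta_d$ is present). Your closing aside relating the non-functoriality of the lifts to the metaplectic stacks of Section \ref{section:metaplecticStacks} is tangential --- those gerbes are constructed from the Hodge bundle --- but it does not affect the argument.
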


The \'{e}tale sheaf of groups $\Aut_{\GG_m}G(\delta)$ is thus representable by a constant group scheme over $S$, which implies:

\begin{corollary}
\label{corollary:TorsorOfThetaStructuresIsFiniteEtale}
The $\Aut_{\GG_m}G(\delta)$-torsor
$$
\mathrm{Isom}_{\GG_m}(G(\cL),G(\delta))(T\rightarrow S) := \{f:G(\cL)(T)\stackrel{\simeq}\rightarrow G(\delta)(T) \text{ such that } f|_{\GG_m(T)} = \id \},
$$
classifying theta structures on $(A,\cL)$, is representable by a finite \'{e}tale scheme over $S$.
\end{corollary}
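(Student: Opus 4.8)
The corollary will follow by assembling three facts: (i) that $\Aut_{\GG_m}G(\delta)$ is representable by a finite \emph{constant}, hence finite \'{e}tale, group scheme over $S$; (ii) that $\mathrm{Isom}_{\GG_m}(G(\cL),G(\delta))$ is a torsor under it for the \'{e}tale topology; and (iii) the general principle that an fppf-locally (a fortiori \'{e}tale-locally) trivial torsor under an affine group scheme is representable, with the properties ``finite'' and ``\'{e}tale'' descending from the structure group to the torsor. Fact (i) is the content of the sentence preceding the statement: from the exact sequence of Lemma \ref{lemma:AutExactSequence} the sheaf $\Aut_{\GG_m}G(\delta)$ is an extension of the constant group scheme $\Sp(\un{K}(\delta),\ppair{}{}_{\delta})$ by the constant group scheme $\un{K}(\delta)$, and this extension is itself constant because, $1/d$ and $\zeta_d$ lying in $\cO_S$, the group $\mu_d$ -- and hence the sheaf of $\eta$-semicharacters that measures the ambiguity of a lift -- is constant over $S$.

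For (ii), I would let $\Aut_{\GG_m}G(\delta)$ act on the right on $\mathrm{Isom}_{\GG_m}(G(\cL),G(\delta))$ by $\beta\cdot g := g\circ\beta$. Over any $T\to S$ admitting a theta structure $\beta_0$, every theta structure over $T$ is of the form $g\circ\beta_0$ for the unique $g=\beta\circ\beta_0^{-1}\in\Aut_{\GG_m}G(\delta)(T)$; hence the action is simply transitive on nonempty fibres, so $\mathrm{Isom}_{\GG_m}(G(\cL),G(\delta))$ is a pseudo-torsor. That it is a genuine torsor -- i.e. that sections exist \'{e}tale-locally on $S$ -- is precisely Mumford's local structure theorem: for $\cL$ of type $\delta$ over a base containing $1/d$ and a primitive $d$-th root of unity, $G(\cL)$ is \'{e}tale-locally isomorphic as a central extension to $G(\delta)$ (\cite{Mumford:EqAbv1}, \S 1).

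For (iii), I would choose an \'{e}tale cover $S'\to S$ over which the torsor acquires a section, so that over $S'$ it becomes isomorphic to $\Aut_{\GG_m}G(\delta)\times_S S'$; the torsor is then recovered by faithfully flat descent from this trivial object equipped with the descent datum encoded by the transition automorphisms over $S'\times_S S'$. Since $\Aut_{\GG_m}G(\delta)$ is affine over $S$, this descent datum is effective and produces a scheme affine over $S$; and since both ``finite'' and ``\'{e}tale'' are fppf-local on the target and stable under base change, the resulting scheme inherits these properties from $\Aut_{\GG_m}G(\delta)$. Hence $\mathrm{Isom}_{\GG_m}(G(\cL),G(\delta))$ is representable by a finite \'{e}tale $S$-scheme.

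The only step demanding real care is (iii): the torsor is a priori trivial only \'{e}tale-locally, not Zariski-locally, so one genuinely needs faithfully flat descent for affine (equivalently, finite) morphisms together with the descent of the properties ``finite'' and ``\'{e}tale'' -- all standard, but this is what should be cited precisely rather than merely asserted. Everything else reduces to the already-established structure of $\Aut_{\GG_m}G(\delta)$ and to the local isomorphism $G(\cL)\simeq G(\delta)$.
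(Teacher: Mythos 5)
Your proof is correct and follows the same route the paper intends: the paper treats the corollary as an immediate consequence of $\Aut_{\GG_m}G(\delta)$ being a finite constant group scheme (via Lemma \ref{lemma:AutExactSequence}) together with Mumford's local isomorphism $G(\cL)\simeq G(\delta)$, and your three steps simply make that implication explicit, including the descent argument for representability that the paper leaves unstated.
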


\subsection{}

 Let now $U\subseteq \Aut_{\GG_m}G(\delta)$ be a subgroup and let $F_U$ be the sheafification for the \'{e}tale topology of the functor
$$
F_U(T) := U\backslash \mathrm{Isom}_{\GG_m}(G(\cL),G(\delta))(T).
$$

\begin{definition}
\label{definition:thetaStructureOfLevelU}
A {\em theta structure of level $U$} on $(A\rightarrow S,\cL)$ is an element of $F_U(S)$. 
\end{definition}

More explicitly, note that any theta structure $\beta$ (of full level) forgetfully induces a theta structure of level $U$ in $F_U(S)$. Conversely every theta structure of level $U$, locally on $S$, can be represented by a full theta structure $\beta$. Two theta structures of level $U$, represented by $\beta,\beta'$, respectively, are isomorphic if there exists $u\in U$  such that $\beta' = u\beta$.

\begin{proposition}
For any subgroup $U\subseteq \Aut_{\GG_m}G(\delta)$, $F_U$ is representable by a finite \'{e}tale scheme over $S$.
\end{proposition}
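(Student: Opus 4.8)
The plan is to reduce the statement about $F_U$ to the already-established representability of $\mathrm{Isom}_{\GG_m}(G(\cL),G(\delta))$ by a finite étale scheme over $S$ (Corollary \ref{corollary:TorsorOfThetaStructuresIsFiniteEtale}), together with the fact that $\Aut_{\GG_m}G(\delta)$, hence any subgroup $U\subseteq \Aut_{\GG_m}G(\delta)$, is a constant (in particular finite or at least locally constant) group scheme over $S$, as follows from Lemma \ref{lemma:AutExactSequence}. The key observation is that $F_U$ is, by construction, the quotient sheaf $\orb{U}{\mathrm{Isom}_{\GG_m}(G(\cL),G(\delta))}$ for the free action of $U$ on the right (the action is free because $\mathrm{Isom}_{\GG_m}(G(\cL),G(\delta))$ is an $\Aut_{\GG_m}G(\delta)$-torsor, so $U$ acts freely as a subgroup). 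Thus the problem is to show that the quotient of a finite étale $S$-scheme by a free action of a (locally) constant finite group is again finite étale over $S$.

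The main steps I would carry out are as follows. First, I would recall that étale-locally on $S$ a theta structure exists, so that étale-locally on $S$ the $\Aut_{\GG_m}G(\delta)$-torsor $\mathrm{Isom}_{\GG_m}(G(\cL),G(\delta))$ is trivial, i.e. isomorphic to $\Aut_{\GG_m}G(\delta)$ acting on itself by translation; consequently $F_U$ becomes étale-locally isomorphic to the constant scheme $\orb{U}{\Aut_{\GG_m}G(\delta)}$, which is visibly finite étale (a disjoint union of copies of the base indexed by the finite coset space $\orb{U}{\Aut_{\GG_m}G(\delta)(\bar{S})}$, at least after passing to a connected component where the relevant groups are genuinely constant). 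Second, since "finite étale over $S$" is a property that is local for the étale topology on $S$ (descent for finite étale morphisms, e.g. by faithfully flat descent of quasi-coherent sheaves of algebras together with the fact that the étale and finiteness conditions descend), the étale-local description assembles to show $F_U$ is representable by a finite étale $S$-scheme. Alternatively, and perhaps cleaner: since the étale quotient map $\mathrm{Isom}_{\GG_m}(G(\cL),G(\delta))\to F_U$ exhibits the source as a $U$-torsor over $F_U$ (free action), and the source is finite étale over $S$, one concludes $F_U$ is finite étale over $S$ because a scheme that is étale-locally on $S$ a finite disjoint union of copies of $S$ is finite étale.

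The step I expect to be the main (if modest) obstacle is the passage from "étale-locally $F_U$ looks like a disjoint union of copies of $S$" to "$F_U$ is representable by a scheme": one must invoke effectivity of étale descent in the relevant generality. This is harmless here because $F_U$ is a sheaf which étale-locally is a finite étale scheme, and finite (hence affine, hence quasi-affine) morphisms satisfy effective descent for the fppf, a fortiori the étale, topology; so the descent datum on the local charts is automatically effective and yields a genuine finite étale $S$-scheme representing $F_U$. A secondary point to be careful about is that $U$ and $\Aut_{\GG_m}G(\delta)$ need only be locally constant over $S$ (they are constant after passing to connected components of $S$ of residue characteristic prime to $d$, which is the standing hypothesis since $1/d\in\cO_S$); but this causes no trouble since all the arguments above are étale-local on $S$, and étale-locally these group schemes are constant and finite, so the coset spaces are finite.
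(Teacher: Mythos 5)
Your proposal is correct and follows essentially the same route as the paper, whose entire proof is the one-line observation that $F_U$ is locally constant for the \'{e}tale topology as a consequence of Cor.~\ref{corollary:TorsorOfThetaStructuresIsFiniteEtale}. You have simply made explicit the details the paper leaves implicit: the \'{e}tale-local trivialization of the $\Aut_{\GG_m}G(\delta)$-torsor identifying $F_U$ with the constant finite coset sheaf, and the effectivity of \'{e}tale descent for finite morphisms needed to pass from the local charts to a representing finite \'{e}tale $S$-scheme.
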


\begin{proof}
The sheaf $F_U$ is locally constant for the \'{e}tale topology, as follows from Cor. \ref{corollary:TorsorOfThetaStructuresIsFiniteEtale}. 
\end{proof}

Although the theory of theta structures of level $U$ can be developed along these general lines, we only work with one kind of level $U$, defined  as follows. Let
\begin{equation}
\label{equation:canonicalSplitting}
\sigma_{\rm{can}}(h) = (1,h), \quad h\in \underline{H}(\delta)\times \{0\},
\end{equation}
be the canonical maximal splitting of $G(\delta)$ over the subgroup $ \underline{H}(\delta) \simeq \underline{H}(\delta)\times \{0\}\subseteq \underline{K}(\delta)$. If a theta structure $\beta:G(\cL)\stackrel{\simeq}\rightarrow G(\delta)$ (of full level) is given, the data 
$$
(H,\sigma) := (\alpha^{-1}\underline{H}(\delta), \beta^{-1}\sigma_{\rm{can}}\alpha)
$$ 
is a splitting of $G(\cL)$ over $H$, and by Thm. \ref{theorem:DescentForLineBundles} the invertible sheaf $\cL$ descends to an invertible sheaf $\cM$ of degree 1 over $A/H$. In fact  we do not need a full theta structure to impose a descent datum on $(A,\cL)$, but only the part of it that preserves $(H,\sigma)$. This motivates the following definition: 

\begin{definition}
\label{definition:M-descent}
Let $A\rightarrow S$ be an abelian scheme of dimension $g$ over a scheme $S$ containing $1/d$, and let $\cL$ be a relatively ample, normalized invertible sheaf over $A$ of type $\delta$. An {\em $\cM$-descent structure} on $(A,\cL)$ is a theta structure of level
$$
U_0:=\{ u \in \Aut_{\GG_m}G(\delta) : u\,\sigma_{\rm{can}} \un{H}(\delta) \subseteq \sigma_{\rm{can}} \un{H}(\delta)\},
$$
the stabilizer in $\Aut_{\GG_m}G(\delta)$ of the canonical splitting $(\un{H}(\delta),\sigma_{\rm{can}})$.
\end{definition}

\subsection{} If $\cL$ is in addition symmetric, the concept of theta structure of level $U$ can be refined to take into account the extra  symmetry. To this end, consider the order two element of $\Aut_{\GG_m}G(\delta)$ given by 
\begin{align*}
D_{-1}: G(\delta) &\longrightarrow G(\delta) \\
(\lambda,z) &\longmapsto (\lambda,-z).
\end{align*}

\begin{definition}[\cite{Mumford:EqAbv2}, \S 6]
Let $A\rightarrow S$ be an abelian scheme of dimension $g$ over a scheme $S$ containing $1/2d$ and $\zeta_d$, and let $\cL$ be a symmetric, relatively ample, normalized invertible sheaf over $A$ of type $\delta$. A {\em symmetric} theta structure on $(A,\cL)$ is a theta structure $
\beta: G(\cL) \stackrel{\simeq}\rightarrow G(\delta),
$
such that
$$
D_{-1}\beta = \beta\delta_{-1},
$$
where $\delta_{-1}$ is the automorphism of $G(\cL)$ defined by \eqref{equation:delta-1}.
\end{definition}

Contrary to the case of theta structures, symmetric theta structures need not even exist locally. Indeed, to ensure existence over an \'{e}tale open we must require the theta characteristic  $e_*^{\cL}:A[2]\rightarrow \mu_2$ to be trivial on ${K(\cL)\cap A[2]}$ (\cite{Mumford:EqAbv1}, Prop. 2.3), which motivates the following:

\begin{definition}[\cite{Mumford:EqAbv2}, \S 6]
A symmetric invertible sheaf $\cL$ on $A$ is {\em totally symmetric} if $e_*^{\cL}\equiv 1$ on $A[2]$. 
\end{definition}

Equivalently, $\cL$ is totally symmetric if locally on $S$ it is the square of a symmetric invertible sheaf. Thus it is clear that if $\delta = (d_1,\ldots,d_g)$ is the type of a totally symmetric invertible sheaf $\cL$, then $2\mid d_i$ for all $i=1,\ldots,g$.

Consider now the \'{e}tale sheaf of groups 
$$
\Aut^{\rm{sym}}_{\GG_m} G(\delta)(T):= \{ \gamma \in \Aut_{\GG_m}G(\delta)(T): D_{-1}\gamma = \gamma D_{-1}\}.
$$
\begin{lemma}
\label{lemma:symmetricAutExactSequence} There is an exact sequence of \'{e}tale sheaves of groups 
$$
0 \rightarrow \underline{K}(\delta)[2] \stackrel{i}\rightarrow \Aut^{\rm{sym}}_{\GG_m}G(\delta) \stackrel{p}\rightarrow \Sp(\un{K}(\delta),\ppair{}{}_{\delta})\rightarrow 0,
$$
where the maps $i,p$ are as in Lemma \ref{lemma:AutExactSequence}.
\end{lemma}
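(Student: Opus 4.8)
The plan is to obtain the sequence by intersecting the exact sequence of Lemma~\ref{lemma:AutExactSequence} with the subsheaf $\Aut^{\rm{sym}}_{\GG_m}G(\delta)\subseteq \Aut_{\GG_m}G(\delta)$, so that the only things to check are that $i$ restricts to a map with image $\un{K}(\delta)[2]$ and that $p$ stays (étale-locally) surjective after restriction. For the first point I would simply compare $D_{-1}\circ i(z)$ and $i(z)\circ D_{-1}$ on a point $(\lambda,z')$: using the formula $i(z)=\{(\lambda,z')\mapsto(\lambda\,\langle z,z'\rangle_\delta,z')\}$ this amounts to $\langle z,z'\rangle_\delta^2=1$ for all $z'$, which by non-degeneracy of $\langle\,,\,\rangle_\delta$ holds exactly when $2z=0$. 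Hence $i^{-1}(\Aut^{\rm{sym}}_{\GG_m}G(\delta))=\un{K}(\delta)[2]$; injectivity of $i$ on $\un{K}(\delta)[2]$ is then inherited, and exactness of the sequence in Lemma~\ref{lemma:AutExactSequence} yields $\ker\big(p|_{\Aut^{\rm{sym}}}\big)=\ker p\cap\Aut^{\rm{sym}}=\im i\cap\Aut^{\rm{sym}}=i(\un{K}(\delta)[2])$, i.e.\ exactness in the middle.

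The real content is surjectivity of $p$ on $\Aut^{\rm{sym}}_{\GG_m}G(\delta)$, and since this is a statement about étale sheaves it suffices to lift a section $\eta\in\Sp(\un{K}(\delta),\langle\,,\,\rangle_\delta)$ after an étale base change; I would therefore pass to a strict henselization of the base, over which all roots of unity of order prime to the residue characteristic are available. By Lemma~\ref{lemma:AutExactSequence} (cf.\ \cite{BL}, Lemma~6.6.6) any lift of $\eta$ has the shape $\gamma(\lambda,z)=(\lambda\,\chi(z),\eta z)$ for a map $\chi\colon\un{K}(\delta)\to\GG_m$ satisfying the cocycle identity $\chi(z_1+z_2)\chi(z_1)^{-1}\chi(z_2)^{-1}=c_\eta(z_1,z_2)$, where $c_\eta(z_1,z_2)=\beta(\eta z_1,\eta z_2)\,\beta(z_1,z_2)^{-1}$ and $\beta\big((x_1,y_1),(x_2,y_2)\big)=\langle x_1,y_2\rangle_\delta$ is the cocycle defining the multiplication on $G(\delta)$; a direct check on points shows $\gamma\in\Aut^{\rm{sym}}_{\GG_m}G(\delta)$ precisely when $\chi$ is even, i.e.\ $\chi(-z)=\chi(z)$. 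Since $\beta$, hence $c_\eta$, is invariant under $z_i\mapsto-z_i$, the reflected function $\chi^{\mathrm{op}}(z):=\chi(-z)$ satisfies the same cocycle identity, so $\rho:=\chi/\chi^{\mathrm{op}}$ is an honest character of $\un{K}(\delta)$, and $\gamma$ is symmetric exactly when this $\rho$ can be arranged to be trivial.

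The key point I expect to be the crux is that $\rho$ is automatically trivial on $\un{K}(\delta)[2]$ — indeed $\rho(z)=\chi(z)/\chi(-z)=1$ whenever $2z=0$ — so $\rho$ factors through multiplication by $2$ on $\un{K}(\delta)$ and, over the strict henselization, is therefore a square: $\rho=\xi^2$ for some $\xi\in\Hom(\un{K}(\delta),\GG_m)$. Replacing $\chi$ by $\xi^{-1}\chi$ leaves the cocycle identity untouched (as $\xi$ is a homomorphism) and gives $(\xi^{-1}\chi)/(\xi^{-1}\chi)^{\mathrm{op}}=\xi^{-2}\rho=1$, i.e.\ an even $\chi$ and hence a symmetric lift of $\eta$. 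The remaining verifications are parallel to those behind Lemma~\ref{lemma:AutExactSequence}; the only genuinely new input is the observation that the obstruction to evenness of a lift, being a character trivial on the $2$-torsion of $\un{K}(\delta)$, necessarily lies in the subgroup of squares of $\Hom(\un{K}(\delta),\GG_m)$.
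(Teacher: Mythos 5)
Your proposal is correct, and the kernel computation is exactly the paper's: comparing $D_{-1}\circ i(z)$ with $i(z)\circ D_{-1}$ on a point $(\lambda,z')$ reduces to $\ppair{2z}{z'}_{\delta}=1$ for all $z'$, whence $z\in\un{K}(\delta)[2]$ by non-degeneracy, and exactness in the middle follows by intersecting the sequence of Lemma \ref{lemma:AutExactSequence} with the symmetric subsheaf. Where you genuinely diverge is on surjectivity of $p$: the paper disposes of this in one line by citing the theory of semi-characters (\cite{ModularCorrespondences}, Lemma 15), i.e.\ it invokes the existence of an \emph{even} $\eta$-semicharacter directly, whereas you take an arbitrary lift $\gamma(\lambda,z)=(\lambda\chi(z),\eta z)$ supplied by Lemma \ref{lemma:AutExactSequence} and then repair it. Your repair is sound: since the $2$-cocycle $c_{\eta}$ is invariant under $z_i\mapsto -z_i$, the ratio $\rho=\chi/\chi^{\mathrm{op}}$ is an honest character, it kills $\un{K}(\delta)[2]$, hence lies in $2\Hom(\un{K}(\delta),\GG_m)$ once enough roots of unity are available over the strict henselization (the two subgroups of $\Hom(\un{K}(\delta),\mu_d)$ — squares, and characters trivial on the $2$-torsion — coincide by the obvious inclusion plus a counting argument), and twisting $\chi$ by $\xi^{-1}$ with $\xi^2=\rho$ preserves the cocycle identity and produces an even semicharacter. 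The trade-off is the usual one: the paper's route is shorter but leans on an external lemma proved by exhibiting semicharacters on generators; yours is self-contained modulo the non-symmetric Lemma \ref{lemma:AutExactSequence} and isolates the actual obstruction (a character trivial on $2$-torsion, hence a square) in a way that would adapt to other symmetry constraints. If you write this up, do spell out the extension step from $\Hom(2\un{K}(\delta),\mu_d)$ to $\Hom(\un{K}(\delta),\mu_d)$, which is the only place the strict henselization (equivalently, the presence of $\zeta_d$) is really used.
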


\begin{proof}
As in Lemma \ref{lemma:AutExactSequence}, $i$ is injective and all elements of $\ker p$ must be of the form $i(z)(\lambda,z') = (\lambda\,\ppair{z}{z'}_{\delta},z')$, for some $z\in \underline{K}(\delta)$. Only now we also require that this isomorphism is symmetric. Thus
$$
\delta_{-1}i(z) : (\lambda,z')\mapsto (\lambda\,\ppair{z}{z'}_{\delta},-z')
$$
must equal 
$$
i(z)\delta_{-1} : (\lambda,z')\mapsto (\lambda\,\ppair{z}{-z'}_{\delta},-z').
$$
This equality holds if and only if $\ppair{z}{z'}_{\delta} \ppair{z}{-z'}_{\delta} =1$, which by bilinearity is equivalent to $\ppair{2z}{z'}_{\delta}=1$, for all $z'\in \underline{K}(\delta)$. Since $\ppair{}{}_{\delta}$ is non-degenerate, we must have $z\in \underline{K}(\delta)[2]$. For surjectivity of $p$, this follows from the theory of semi-characters, see \cite{ModularCorrespondences}, Lemma 15.
\end{proof}

Consequently, the \'{e}tale sheaf $\Aut^{\rm{sym}}_{\GG_m}G(\delta)$ is representable by a constant group scheme over $S$ and we also have:

\begin{corollary}
Let $\cL$ be totally symmetric. The $\Aut^{\rm{sym}}_{\GG_m}G(\delta)$-torsor
$$
\mathrm{Isom}^{\mathrm{sym}}_{\GG_m}(G(\cL),G(\delta))(T) := \{\beta\in  \mathrm{Isom}_{\GG_m}(G(\cL),\underline{G}(\delta))(T): \beta\delta_{-1} = D_1\beta \},
$$
classifying symmetric theta structures on $(A,\cL)$, is representable by a finite \'{e}tale scheme over $S$.
\end{corollary}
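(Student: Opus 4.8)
The plan is to reproduce, in the symmetric setting, the argument behind Cor.~\ref{corollary:TorsorOfThetaStructuresIsFiniteEtale}: a torsor under an \'etale sheaf of groups that is representable by a finite constant group scheme, and which is moreover non-empty \'etale-locally on $S$, is itself a locally constant \'etale sheaf with finite stalks, hence representable by a finite \'etale $S$-scheme (by \'etale descent for finite, or merely quasi-affine, morphisms). There are thus two points to check: that $\Aut^{\rm{sym}}_{\GG_m}G(\delta)$ is finite constant, and that $\mathrm{Isom}^{\mathrm{sym}}_{\GG_m}(G(\cL),G(\delta))$ is a genuine torsor under it. The first is immediate from Lemma~\ref{lemma:symmetricAutExactSequence} and has already been recorded: the sheaf sits in an extension of the finite constant group scheme $\Sp(\un{K}(\delta),\ppair{}{}_{\delta})$ (the automorphism group of the finite symplectic module $(K(\delta),\ppair{}{}_{\delta})$ being a fixed finite group) by the finite constant group scheme $\un{K}(\delta)[2]$, so it is finite constant.

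For the torsor structure, $\mathrm{Isom}^{\mathrm{sym}}_{\GG_m}(G(\cL),G(\delta))$ carries the evident action of $\Aut^{\rm{sym}}_{\GG_m}G(\delta)$ by post-composition, $\gamma\cdot\beta:=\gamma\beta$, which is visibly free; and it is transitive on non-empty fibres, for if $\beta,\beta'$ are two symmetric theta structures over some $T\to S$, then $\gamma:=\beta'\beta^{-1}$ is an automorphism of $G(\delta)_T$ fixing $\GG_m$, and a one-line manipulation of $D_{-1}\beta=\beta\delta_{-1}$ and $D_{-1}\beta'=\beta'\delta_{-1}$ gives $D_{-1}\gamma=\beta'\delta_{-1}\beta^{-1}=\gamma D_{-1}$, so that $\gamma\in\Aut^{\rm{sym}}_{\GG_m}G(\delta)(T)$ and $\beta'=\gamma\beta$. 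Hence $\mathrm{Isom}^{\mathrm{sym}}_{\GG_m}(G(\cL),G(\delta))$ is a pseudo-torsor, and it remains to see it is non-empty over an \'etale cover of $S$. This is exactly where the hypothesis enters: since $\cL$ is totally symmetric, $e_*^{\cL}\equiv 1$ on all of $A[2]$, in particular on $K(\cL)\cap A[2]$, and by \cite{Mumford:EqAbv1}, Prop.~2.3 a symmetric theta structure on $(A,\cL)$ then exists \'etale-locally on $S$.

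I expect the one step needing genuine care to be this last local-existence claim: Mumford's criterion is originally stated over an algebraically closed field, so to deduce \'etale-local existence over an arbitrary base $S$ in which $2d$ and a primitive $d$-th root of unity are invertible one argues as in the surjectivity steps of Lemmas~\ref{lemma:AutExactSequence} and \ref{lemma:symmetricAutExactSequence} — reduce to a strict henselization $\Spec R$, make the required choice of semicharacter over the separably closed residue field, and lift it over $R$ using that all relevant orders are prime to the residue characteristic. Granting this, $\mathrm{Isom}^{\mathrm{sym}}_{\GG_m}(G(\cL),G(\delta))$ is a torsor under the finite constant group scheme $\Aut^{\rm{sym}}_{\GG_m}G(\delta)$, hence \'etale-locally isomorphic to it as a sheaf, hence a locally constant \'etale sheaf with finite stalks, hence representable by a finite \'etale scheme over $S$, as claimed.
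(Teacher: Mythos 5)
Your proof is correct and follows essentially the same route as the paper, which deduces representability immediately from Lemma \ref{lemma:symmetricAutExactSequence} (the structure group is finite constant, so a torsor under it is locally constant with finite stalks), exactly as in Cor.~\ref{corollary:TorsorOfThetaStructuresIsFiniteEtale}. You merely make explicit two points the paper leaves implicit: the verification of the pseudo-torsor structure and the \'etale-local non-emptiness, the latter being precisely where the totally-symmetric hypothesis enters via \cite{Mumford:EqAbv1}, Prop.~2.3, as the paper itself notes in the discussion preceding the definition of total symmetry.
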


Symmetric theta structures of level $U\subseteq \Aut^{\rm{sym}}_{\GG_m} G(\delta)$ may then be defined analogously to Def. \ref{definition:thetaStructureOfLevelU}, via sheafification of the functor
$$
F^{\rm{sym}}_U(T) := U\backslash \mathrm{Isom}^{\mathrm{sym}}_{\GG_m}(G(\cL),G(\delta))(T),
$$
which is representable by a finite \'{e}tale scheme over $S$. In particular, we can refine the concept of an $\cM$-descent theta structure (Def. \ref{definition:M-descent}) as follows:

\begin{definition}
\label{definition:theta-descent}
Let $A\rightarrow S$ be an abelian scheme of dimension $g$ over a scheme $S$ containing $1/2d$ and $\zeta_d$, and let $\cL$ be a totally symmetric, relatively ample, normalized invertible sheaf over $A$ of type $\delta$. A {\em $\Theta$-descent structure} on $(A,\cL)$ is a symmetric theta structure of level
$$
U^{\rm{sym}}_0:=\{ u \in \Aut^{\rm{sym}}_{\GG_m}G(\delta) : u\,\sigma_{\rm{can}} \un{H}(\delta) \subseteq \sigma_{\rm{can}} \un{H}(\delta)\},
$$
the stabilizer in $\Aut^{\rm{sym}}_{\GG_m}G(\delta)$ of the canonical splitting $(\un{H}(\delta),\sigma_{\rm{can}})$ defined by \eqref{equation:canonicalSplitting}(which is symmetric, in the sense of Def. \ref{definition:symmetricSplitting}). 
\end{definition}

Intuitively, a $\Theta$-descent structure is the `weakest' type of structure that we must impose on $(A,\cL)$ in order to ensure the existence of an isogeny $\phi: A\rightarrow B$ together with an isomorphism $f:\phi^*\Theta \simeq \cL$, for some normalized symmetric invertible sheaf $\Theta$ inducing a principal polarization $\phi_{\Theta}: B\rightarrow B^t$.

\begin{remark}
It is possible to relax the condition of $\cL$ being totally symmetric with more general conditions on the parity of the type of $\cL$, as in \cite{BL}, Theorem 6.9.5, but for the sake of simplicity we will not pursue this avenue of generalization. 
\end{remark}

\subsection{} 
\label{subsection:ModuliStacks}
We now introduce the classifying stacks of some of the structures so far described, and briefly discuss some relationships among them. 

\begin{definition}
\label{definition:stackOfSymmetricSheaves}
Define $\sX_{g,d}$ to be the stack over $\ZZ[1/2d]$ whose category of sections $\sX_{g,d}(S)$ over a scheme $S \rightarrow \Spec(\ZZ[1/2d])$ is the groupoid of pairs $(A\rightarrow S, \cL)$ of an abelian scheme $A\rightarrow S$ of dimension $g$, together with a symmetric, relatively ample, normalized invertible sheaf $\cL$ of degree $d$.
\end{definition}

The stack $\sX_{g,d}$ is equipped with a natural map
\begin{align*}
\sX_{g,d} &\longrightarrow \sA_{g,d} \\
(A,\cL) &\longmapsto (A, \phi_{\cL}: A \rightarrow A^{t})
\end{align*}
to the moduli stack $\sA_{g,d}$ of abelian schemes with a polarization of degree $d$. For an abelian scheme $A$, the set of symmetric sheaves with fixed polarization is a torsor under $A[2]$, via the action on the theta characteristics. Thus $\sX_{g,d}$ is a torsor over $\sA_{g,d}$ under the 2-torsion of the universal abelian scheme. In particular:

\begin{proposition}
The algebraic stack $\sX_{g,d}$ is smooth over $\ZZ[1/2d]$. 
\end{proposition}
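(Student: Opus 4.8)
The plan is to deduce this from the smoothness of $\sA_{g,d}$ together with the torsor description recorded just above. Recall that the structure morphism $\sX_{g,d}\to\sA_{g,d}$, sending $(A,\cL)$ to $(A,\phi_{\cL})$, exhibits $\sX_{g,d}$ as a torsor over $\sA_{g,d}$ for the \'etale topology under the group scheme $\cA_{g,d}[2]$ of $2$-torsion of the universal abelian scheme. The first step is to observe that, because $2$ has been inverted throughout, $\cA_{g,d}[2]$ is a \emph{finite \'etale} group scheme over $\sA_{g,d}$; this is precisely the point at which the hypothesis $1/2\in\cO_S$ (and hence the choice of base ring $\ZZ[1/2d]$ rather than $\ZZ[1/d]$) enters. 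A torsor, for the \'etale topology, under a finite \'etale group sheaf becomes trivial --- hence isomorphic to the group itself --- after a surjective \'etale base change, so by \'etale descent (effective descent for affine morphisms) it is representable by a scheme which is again finite and \'etale over the base. Applying this to $\sX_{g,d}\to\sA_{g,d}$, I would conclude that this morphism is representable, finite and \'etale, and in particular smooth.

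For the second step I would invoke the well-known fact that $\sA_{g,d}$ is a smooth algebraic stack over $\ZZ[1/d]$: the deformation theory of a polarized abelian scheme along a square-zero extension is unobstructed and the space of infinitesimal deformations is locally free of the expected rank once $d$ is invertible (see e.g. \cite{MoretBailly:PinceauxAbv}, or the standard references on moduli of polarized abelian schemes). Since $\Spec\ZZ[1/2d]\subseteq\Spec\ZZ[1/d]$, the stack $\sX_{g,d}$ over $\ZZ[1/2d]$ maps to $\sA_{g,d}$, which is smooth over $\ZZ[1/2d]$. Composing the smooth morphism $\sX_{g,d}\to\sA_{g,d}$ from the first step with the smooth structure morphism $\sA_{g,d}\to\Spec\ZZ[1/2d]$ then yields that $\sX_{g,d}$ is smooth over $\ZZ[1/2d]$, as desired.

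The step requiring the most attention --- though it is hardly a serious obstacle --- is the verification that the $A[2]$-action on the symmetric sheaves inducing a fixed polarization (by $\cL\mapsto T_t^*\cL$ for $t\in A[2]$, cf. \cite{Mumford:EqAbv1} \S 2) makes $\sX_{g,d}$ into a torsor \emph{relatively} over $\sA_{g,d}$, and the closely related observation that inverting $2$ is exactly what forces $\cA_{g,d}[2]$ to be \'etale, so that the torsor morphism is \'etale and not merely finite locally free. Once these are in place the rest is an assembly of standard facts. Alternatively, one could bypass the torsor language and check formal smoothness directly: given a square-zero thickening $S_0\hookrightarrow S$ over $\ZZ[1/2d]$ and an object $(A_0,\cL_0)$ of $\sX_{g,d}(S_0)$, first lift the polarized abelian scheme $(A_0,\phi_{\cL_0})$ to $S$ using smoothness of $\sA_{g,d}$, and then lift the normalized symmetric sheaf inducing the lifted polarization, the possible lifts forming a non-empty torsor under $A[2]$; but the torsor argument above is cleaner and I would present it as the main one.
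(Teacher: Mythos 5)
Your argument is correct and is essentially the paper's own proof: the paper likewise notes that $\sX_{g,d}\to\sA_{g,d}[1/2d]$ is \'etale (via the torsor description under the $2$-torsion of the universal abelian scheme) and that $\sA_{g,d}[1/2d]$ is smooth over $\ZZ[1/2d]$, then composes. Your write-up merely fills in the standard details.
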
 

\begin{proof}
Indeed $\sA_{g,d}[1/2d]$ is smooth over $\ZZ[1/2d]$ and $\sX_{g,d} \rightarrow \sA_{g,d}[1/2d]$ is \'{e}tale. 
\end{proof}

Next, fix a type $\delta = (d_1,\ldots,d_g)$ such that $d= d_1\cdots d_g$. Let $\sA_{g,\delta}$ be the open substack of $\sA_{g,d}$ given by pairs $(A,\phi)$ where $\phi$ is of type $\delta$. Note that $\sA_{g,d}[1/d] = \coprod_{\delta} \sA_{g,\delta}$, where $\delta$ ranges over all possible types. Assume additionally that $\delta$ is a type such that $2\mid d_i$ for all $i=1,\ldots, g$ and let $\sX^{\rm{tot}}_{g,\delta}$ be the stack classifying pairs $(A,\cL)$, with $\cL$ {\em totally} symmetric, normalized, relatively ample of type $\delta$. Then the forgetful map $\sX_{g,d} \rightarrow \sA_{g,d}$ above induces an identification
$$
\sX^{\rm{tot}}_{g,\delta} = \sA_{g,\delta} 
$$
since there is one and only one (normalized, relatively ample) totally symmetric sheaf of a given polarization of type $\delta$. We will use this identification throughout.

\begin{definition}
\label{definition:moduliStackOfThetaStructures}
Let $\delta$ be a type such that $2\mid d_i$ for all $i=1,\ldots, g$. Define $\sA_{g,\delta}(\Theta,\delta)$ to be the stack over $\ZZ[1/d,\zeta_d]$ ($d$ even) whose category of sections $\sA_{g,\delta}(\Theta,\delta)(S)$ over a scheme $S \rightarrow \Spec(\ZZ[1/d,\zeta_d])$ is the groupoid of triples $(A\rightarrow S, \cL, \beta)$ of an abelian scheme $A\rightarrow S$ of dimension $g$, together with a totally symmetric, relatively ample, normalized invertible sheaf $\cL$ of type $\delta$ and a symmetric theta structure $\beta:G(\cL) \stackrel{\simeq}\rightarrow G(\delta)$. 
\end{definition}

\begin{proposition}
\label{proposition:ModuliStackOfThetaStructuresIsAScheme}
The algebraic stack $\sA_{g,\delta}(\Theta,\delta)$ is smooth over $\ZZ[1/d,\zeta_d]$ ($d$ even). 
\end{proposition}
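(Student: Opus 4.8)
The plan is to realize $\sA_{g,\delta}(\Theta,\delta)$ as a finite étale cover of $\sA_{g,\delta}$ and then deduce its smoothness from that of $\sA_{g,\delta}$. Since $d$ is even, $\ZZ[1/2d]=\ZZ[1/d]$, so after the (flat) base change $\ZZ[1/d]\to\ZZ[1/d,\zeta_d]$ the open substack $\sA_{g,\delta}\subseteq\sA_{g,d}[1/2d]$ is a smooth algebraic stack over $\ZZ[1/d,\zeta_d]$; I continue to write $\sA_{g,\delta}$ for this base change. Via the identification $\sX^{\mathrm{tot}}_{g,\delta}=\sA_{g,\delta}$ recalled above, there is a universal pair $(\cA\to\sA_{g,\delta},\cL)$, with $\cL$ the unique totally symmetric, relatively ample, normalized invertible sheaf of type $\delta$ inducing the universal polarization.

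First I would note that forgetting the symmetric theta structure defines a morphism
$$
\pi\colon \sA_{g,\delta}(\Theta,\delta)\longrightarrow \sA_{g,\delta},
$$
and that for any scheme $S$ and any $S$-point of $\sA_{g,\delta}$, classifying $(\cA_S,\cL_S)$, the fiber of $\pi$ over it is the set of symmetric theta structures $\beta\colon G(\cL_S)\stackrel{\simeq}\rightarrow G(\delta)$, i.e.\ exactly $\mathrm{Isom}^{\mathrm{sym}}_{\GG_m}(G(\cL_S),G(\delta))(S)$. This is immediate from the description of $\sA_{g,\delta}(\Theta,\delta)$ as a classifying stack; no sheafification is needed, since the objects involved are honest isomorphisms of $S$-group schemes. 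Because $\cL_S$ is totally symmetric, the corollary to Lemma \ref{lemma:symmetricAutExactSequence} shows this functor is representable by a finite étale $S$-scheme, a torsor under $\Aut^{\mathrm{sym}}_{\GG_m}G(\delta)$. Hence $\pi$ is representable and finite étale; in particular $\sA_{g,\delta}(\Theta,\delta)$ is an algebraic stack.

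Then I would conclude directly: finite étale morphisms are smooth, so the composite $\sA_{g,\delta}(\Theta,\delta)\xrightarrow{\pi}\sA_{g,\delta}\to\Spec\ZZ[1/d,\zeta_d]$ is a composite of smooth morphisms, hence smooth. Equivalently, via the infinitesimal criterion: for a square-zero thickening $S_0\hookrightarrow S$ over $\ZZ[1/d,\zeta_d]$, smoothness of $\sA_{g,\delta}$ lifts an object $(A,\cL)/S_0$ to $S$, and the finite étale torsor of symmetric theta structures then lifts the structure $\beta$ uniquely, so the deformation problem for $\sA_{g,\delta}(\Theta,\delta)$ is unobstructed.

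The main obstacle: honestly there is little of substance to overcome here, since the entire content has already been packaged into the corollary to Lemma \ref{lemma:symmetricAutExactSequence} (that the torsor of symmetric theta structures over a base carrying a totally symmetric $\cL$ is finite étale), which itself rests on the exactness of the sequence of Lemma \ref{lemma:symmetricAutExactSequence} and ultimately on the existence of $\eta$-semicharacters. The only points needing care are the bookkeeping with the base rings ($\ZZ[1/2d]$ versus $\ZZ[1/d,\zeta_d]$, and the role of $d$ being even) and the verification that the fiber of $\pi$ over an $S$-point is precisely the $\mathrm{Isom}^{\mathrm{sym}}_{\GG_m}$-functor of that corollary. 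That $\sA_{g,\delta}(\Theta,\delta)$ is moreover representable by a scheme, at least for types $\delta$ with all $d_i$ sufficiently large, is a separate matter to be settled by the usual rigidity of theta structures, and is not needed for the smoothness statement.
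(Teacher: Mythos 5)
Your argument is correct and reaches the conclusion by the same mechanism as the paper --- exhibiting $\sA_{g,\delta}(\Theta,\delta)$ as finite \'{e}tale over a stack already known to be smooth --- but the factorization is different. The paper maps $(A,\cL,\beta)\mapsto(A,\cL,\alpha)$ to the intermediate stack $\sA_{g,\delta}(\delta)$ of arithmetic level-$\delta$ structures and uses Lemma \ref{lemma:symmetricAutExactSequence} to see that the fibers of that map are torsors under $\un{K}(\delta)[2]$, i.e.\ under the $2$-torsion of the universal abelian scheme, which is finite \'{e}tale precisely because $d$ is even; smoothness of $\sA_{g,\delta}(\delta)$ then finishes the proof. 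You instead descend all the way to $\sA_{g,\delta}$ in one step and invoke the corollary to Lemma \ref{lemma:symmetricAutExactSequence}, namely that the full $\mathrm{Isom}^{\mathrm{sym}}_{\GG_m}(G(\cL),G(\delta))$-torsor under the constant group $\Aut^{\mathrm{sym}}_{\GG_m}G(\delta)$ is finite \'{e}tale. The two routes rest on the same exact sequence, and your identification of the fiber of the forgetful map with this Isom-functor (together with the observation that total symmetry of $\cL$ guarantees \'{e}tale-local existence of symmetric theta structures, so the torsor is genuinely a torsor) is the only point requiring care; you handle it correctly. Your version is marginally more economical, while the paper's factorization has the side benefit of isolating exactly where the hypothesis that $d$ is even enters (the $A[2]$-torsor step) and of situating $\sA_{g,\delta}(\Theta,\delta)$ inside the tower \eqref{equation:thetaStructureSandwich}, which is used later.
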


\begin{proof}
The stack $\sA_{g,\delta}(\Theta,\delta)$ is equipped with a natural map
$$
\sA_{g,\delta}(\Theta,\delta) \longrightarrow \sA_{g,\delta}(\delta) ,\quad
(A,\cL,\beta) \longmapsto (A, \cL, \alpha)
$$
to the moduli stack $\sA_{g,\delta}(\delta)$ of abelian schemes of dimension $g$ with a polarization of type $\delta$ and an arithmetic level structure $\alpha: K(\cL) \stackrel{\simeq}\rightarrow \underline{K}(\delta)$. Via Lemma \ref{lemma:symmetricAutExactSequence}, this map exhibits $\sA_{g,\delta}(\Theta,\delta)$ as a torsor over $\sA_{g,\delta}(\delta)$ under the 2-torsion of the universal abelian scheme, and is finite \'{e}tale over $\ZZ[1/d,\zeta_d]$ since $d$ is even. But $\sA_{g,\delta}(\delta)$ is smooth over $\ZZ[1/d,\zeta_d]$, so  $\sA_{g,\delta}(\Theta,\delta)$ is also smooth.
\end{proof}

\begin{remark}
\label{remark:quasi-projectiveness}
By \cite{Mumford:EqAbv2}, \S 6, the stack $\sA_{g,\delta}(\Theta,\delta)$ is representable by a quasi-projective scheme over $\ZZ[1/d,\zeta_d]$ if $4\mid d_i$, $i=1,\ldots,g$. 
\end{remark}

Note that by \cite{Mumford:EqAbv1}, Rmk. 2.3, 2.4, there is a finite \'{e}tale map $\sA_{g,\delta}(2\delta) \rightarrow \sA_{g,\delta}(\Theta,\delta)$, defined over $\ZZ[1/d,\zeta_{2d}]$, obtained by lifting an (arithmetic) level-2$\delta$ structure to a symmetric theta structure for $(A,\cL^{\otimes 2})$ and then projecting down by the map $\eta_2:G(\cL^{\otimes 2})\rightarrow G(\cL)$, defined by \eqref{equation:eta2}. The composition with the forgetful map $\sA_{g,\delta}(\Theta,\delta) \rightarrow \sA_{g,\delta}(\delta)$ is given by
$$
V: \sA_{g,\delta}(2\delta) \rightarrow \sA_{g,\delta}(\delta), \quad (A,\cL, \alpha) \mapsto (A,\cL, 2\alpha).
$$
Thus $\sA_{g,\delta}(\Theta,\delta)$ can be thought of as an `intermediate' moduli stack
\begin{equation}
\label{equation:thetaStructureSandwich}
\begin{tikzcd}
\sA_{g,\delta}(2\delta) \arrow[bend left=20]{rr}{V} \arrow{r}& \sA_{g,\delta}(\Theta,\delta) \arrow{r}& \sA_{g,\delta}(\delta),
\end{tikzcd}
\end{equation}
sitting in between the moduli stacks of full level -$\delta$ and -2$\delta$ structures.

We are now ready to study the moduli stack of $\Theta$-descent structures (Def. \ref{definition:theta-descent}): 

\begin{definition}
\label{definition:ModuliStackOfThetaDescentStructures}
Let $\delta$ be a type such that $2\mid d_i$ for all $i=1,\ldots, g$. Define $\sA_{g,\delta}(\Theta)$ to be the stack over $\ZZ[1/d]$ ($d$ even) whose category of sections $\sA_{g,\delta}(\Theta)(S)$ over a scheme $S \rightarrow \Spec(\ZZ[1/d])$ is the groupoid of triples $(A\rightarrow S, \cL, u_0)$ of an abelian scheme $A\rightarrow S$ of dimension $g$, together with a totally symmetric, relatively ample, normalized invertible sheaf $\cL$ of type $\delta$ and a $\Theta$-descent structure $u_0 \in F_{U_0^{\rm{sym}}}(S)$. 
\end{definition}

Let $u_0$ be a $\Theta$-descent structure, locally induced by a symmetric theta structure $\beta: G(\cL)\stackrel{\simeq}\rightarrow G(\delta)$. The maximal symmetric splitting $\beta^{-1}(\underline{H}(\delta),\sigma_{\rm{can}})$ of $G(\cL)$ over $H:= \alpha^{-1}(\underline{H}(\delta))$ is independent of the choice of local representative $\beta$ for $u_0$. Therefore we obtain a more concrete description of $\sA_{g,\delta}(\Theta)$: 

\begin{proposition}
\label{proposition:thetaDescentStackInTermsOfSubgroups}
$\sA_{g,\delta}(\Theta)$ can be identified with the stack over $\ZZ[1/d,\zeta_d]$ ($d$ even) whose category of sections $\sA_{g,\delta}(\Theta)(S)$ over a scheme $S \rightarrow \Spec(\ZZ[1/d,\zeta_d])$ is the groupoid of quadruples $(A\rightarrow S,\cL, H,\sigma)$, where $\cL$  is a relatively ample, normalized invertible sheaf of type $\delta$, $H\subseteq K(\cL)$ is a subgroup scheme locally isomorphic to $\underline{H}(\delta)$, and $(H,\sigma)$ is a maximal symmetric splitting of $G(\cL)$ over $H$.  
\end{proposition}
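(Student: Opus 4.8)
The plan is to write down the obvious morphism of stacks relating the two descriptions and to check that it is an equivalence by reducing, over each pair $(A,\cL)$, to an isomorphism of \'{e}tale sheaves. Write $\sM$ for the stack whose sections over $S$ are the quadruples $(A\to S,\cL,H,\sigma)$ in the statement. Both $\sA_{g,\delta}(\Theta)$ (Def. \ref{definition:ModuliStackOfThetaDescentStructures}) and $\sM$ are stacks over the stack $\sX^{\rm{tot}}_{g,\delta}=\sA_{g,\delta}$ of pairs $(A,\cL)$ via the forgetful maps, and for a fixed $(A\to S,\cL)$ the corresponding fibres are, respectively, the \'{e}tale sheaf of sets $F_{U_0^{\rm{sym}}}$ on $S$ and the \'{e}tale sheaf $\cS(A,\cL)$ whose sections over $T\to S$ are the maximal symmetric splittings $(H,\sigma)$ of $G(\cL|_T)$ with $H$ \'{e}tale-locally isomorphic to $\underline{H}(\delta)$ (the fibre categories here are discrete). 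So it suffices to produce, naturally in $(A,\cL)$ and compatibly with base change, an isomorphism of \'{e}tale sheaves $\Phi\colon F_{U_0^{\rm{sym}}}\xrightarrow{\ \sim\ }\cS(A,\cL)$. The map is the one already implicit in the paragraph preceding the proposition: given $u_0$, choose \'{e}tale-locally a symmetric theta structure $\beta\colon G(\cL)\xrightarrow{\sim}G(\delta)$ representing it, with induced arithmetic level structure $\alpha\colon K(\cL)\xrightarrow{\sim}\underline{K}(\delta)$, and put $H:=\alpha^{-1}\underline{H}(\delta)$, $\sigma:=\beta^{-1}\sigma_{\rm{can}}\alpha$. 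By the observation recalled there, $(H,\sigma)$ is independent of $\beta$, hence glues to a global maximal symmetric splitting, and $H\simeq\underline{H}(\delta)$ \'{e}tale-locally because $\alpha$ is an isomorphism; functoriality in $(A,\cL)$ is immediate.

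For injectivity, which may be checked \'{e}tale-locally where $F_{U_0^{\rm{sym}}}$ is the sheaf of $U_0^{\rm{sym}}$-orbits in the $\Aut^{\rm{sym}}_{\GG_m}G(\delta)$-torsor of symmetric theta structures: if $\beta$ and $\beta'=u\beta$ (with $u\in\Aut^{\rm{sym}}_{\GG_m}G(\delta)$) induce the same $(H,\sigma)$, then comparing level structures gives $p(u)\underline{H}(\delta)=\underline{H}(\delta)$, and comparing splittings gives $u\circ\sigma_{\rm{can}}=\sigma_{\rm{can}}\circ p(u)$ on $\underline{H}(\delta)$; since $p\circ\sigma_{\rm{can}}=\id$ this is exactly the condition $u\,\sigma_{\rm{can}}\underline{H}(\delta)\subseteq\sigma_{\rm{can}}\underline{H}(\delta)$, i.e. $u\in U_0^{\rm{sym}}$. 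Running the computation in reverse shows conversely that every $u\in U_0^{\rm{sym}}$ fixes $(H,\sigma)$, so the fibre of $\Phi$ over $(H,\sigma)$ is precisely one $U_0^{\rm{sym}}$-orbit and $\Phi$ is injective.

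For surjectivity, work \'{e}tale-locally, where in addition $\underline{K}(\delta)$ is the constant group $K(\delta)$. Total symmetry of $\cL$ provides, \'{e}tale-locally, a symmetric theta structure $\beta_0\colon G(\cL)\xrightarrow{\sim}G(\delta)$; transporting a given $(H,\sigma)\in\cS(A,\cL)(T)$ through $\beta_0$ yields a maximal symmetric splitting $(H',\sigma')$ of $G(\delta)$ with $H'\simeq\underline{H}(\delta)$, and it is enough to find $u\in\Aut^{\rm{sym}}_{\GG_m}G(\delta)$ with $u(H',\sigma')=(\underline{H}(\delta),\sigma_{\rm{can}})$, since then $u\beta_0$ represents a $\Theta$-descent structure with $\Phi$-image $(H,\sigma)$. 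Such a $u$ is built in two steps. First, by surjectivity of $p$ in Lemma \ref{lemma:symmetricAutExactSequence} together with transitivity of $\Sp(\underline{K}(\delta),\ppair{}{}_{\delta})$ on maximal isotropic subgroups of $\underline{K}(\delta)$ isomorphic to $\underline{H}(\delta)$, we may assume $H'=\underline{H}(\delta)\times\{0\}$. Second, any section of $p\colon G(\delta)\to\underline{K}(\delta)$ over $\underline{H}(\delta)\times\{0\}$ has the form $h\mapsto(\chi(h),h)$ for a character $\chi\colon\underline{H}(\delta)\to\GG_m$, and the symmetry condition of Def. \ref{definition:symmetricSplitting} --- which for $G(\delta)$ amounts to $D_{-1}(\sigma(h))=\sigma(-h)$ --- forces $\chi^2=1$; as $z$ runs over $\underline{K}(\delta)[2]$ the automorphism $i(z)$ twists $\sigma_{\rm{can}}$ by exactly the $\mu_2$-valued characters of $\underline{H}(\delta)$, so $\sigma'=i(z)\sigma_{\rm{can}}$ for some such $z$, and $u:=i(z)^{-1}\in\ker p\subseteq\Aut^{\rm{sym}}_{\GG_m}G(\delta)$ completes the construction.

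Hence $\Phi$ is an isomorphism of \'{e}tale sheaves, naturally in $(A,\cL)$, and $\sA_{g,\delta}(\Theta)\simeq\sM$. The only genuinely non-formal ingredient is the transitivity statement invoked in the first step of the surjectivity argument --- a Witt-type fact for finite symplectic modules, which in this setting is part of the classical theory of theta and level structures (\cite{Mumford:EqAbv1}, \S 1, \cite{Mumford:EqAbv2}, \S 6, \cite{BL}, \S 6); everything else is a mechanical unwinding of the definitions of symmetric theta structure and of $U_0^{\rm{sym}}$ together with Lemma \ref{lemma:symmetricAutExactSequence}.
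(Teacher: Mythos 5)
Your argument is correct and follows the same route the paper takes: the paper states this proposition with no formal proof, relying only on the preceding observation that the splitting $\beta^{-1}(\un{H}(\delta),\sigma_{\rm{can}})$ is independent of the local representative $\beta$ of $u_0$, which is exactly the well-definedness of your map $\Phi$. Your injectivity check (unwinding the definition of $U_0^{\rm{sym}}$) and your surjectivity check (symplectic transitivity on maximal isotropic subgroups isomorphic to $\un{H}(\delta)$ plus the twist by $i(z)$, $z\in\un{K}(\delta)[2]$, via Lemma \ref{lemma:symmetricAutExactSequence}) correctly supply the details the paper leaves implicit.
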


\begin{proposition}
The algebraic stack $\sA_{g,\delta}(\Theta)$ is smooth over $\ZZ[1/d]$ ($d$ even). 
\end{proposition}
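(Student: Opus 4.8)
The plan is to realize $\sA_{g,\delta}(\Theta)$ as a finite \'etale cover of a stack already known to be smooth, in the same spirit as the proof of Proposition~\ref{proposition:ModuliStackOfThetaStructuresIsAScheme}. Under the identification $\sX^{\mathrm{tot}}_{g,\delta}=\sA_{g,\delta}$, the stack $\sA_{g,\delta}(\Theta)$ carries a forgetful $1$-morphism $\sA_{g,\delta}(\Theta)\to\sA_{g,\delta}$, $(A,\cL,u_0)\mapsto(A,\cL)$, where $\cL$ is the unique totally symmetric, relatively ample, normalized sheaf of the given polarization of type $\delta$. The first step is to show this morphism is representable by finite \'etale morphisms: for a scheme $S$ and an $S$-point $(A,\cL)$ of $\sA_{g,\delta}$, the fibre product $\sA_{g,\delta}(\Theta)\times_{\sA_{g,\delta}}S$ is canonically the sheaf $F_{U_0^{\mathrm{sym}}}$ over $S$ (a $\Theta$-descent structure has no automorphisms, so the fibre is the discrete groupoid $F_{U_0^{\mathrm{sym}}}(S)$), and this sheaf is representable by a finite \'etale $S$-scheme by the representability of the quotient sheaves $F^{\mathrm{sym}}_U$ established earlier. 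Equivalently, using Proposition~\ref{proposition:thetaDescentStackInTermsOfSubgroups}, the fibre is the scheme of maximal symmetric splittings $(H,\sigma)$ of $G(\cL)$, which is finite \'etale over $S$.

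Granting the first step, the argument concludes at once. Since $2\mid d_i$ for every $i$, the integer $d$ is even and $\ZZ[1/2d]=\ZZ[1/d]$; the stack $\sA_{g,\delta}$ is an open substack of $\sA_{g,d}[1/d]$, which is smooth over $\ZZ[1/d]$ (this being exactly the smoothness used in the proof that $\sX_{g,d}$ is smooth). A finite \'etale morphism is smooth, and a composition of smooth morphisms is smooth, so $\sA_{g,\delta}(\Theta)\to\sA_{g,\delta}\to\Spec\ZZ[1/d]$ is smooth, as claimed.

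The only delicate point, and where I expect the (minor) friction to lie, is the dependence on the base ring: the Heisenberg scheme $G(\delta)$, and hence the torsor $\mathrm{Isom}^{\mathrm{sym}}_{\GG_m}(G(\cL),G(\delta))$ and its quotient $F_{U_0^{\mathrm{sym}}}$, are literally written down only after adjoining a primitive $d$-th root of unity, whereas $\sA_{g,\delta}(\Theta)$ lives over $\ZZ[1/d]$. One must therefore justify that $F_{U_0^{\mathrm{sym}}}$ is still representable by a finite \'etale scheme over an arbitrary $\ZZ[1/d]$-scheme; this is handled exactly as for the sheaves $F_U$, namely by the observation that $F_{U_0^{\mathrm{sym}}}$ is locally constant for the \'etale topology with finite stalks, the choice of $\zeta_d$ affecting $G(\delta)$ only up to (\'etale-locally available) isomorphism. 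Alternatively one sidesteps the issue entirely: base change to $\ZZ[1/d,\zeta_d]$, where Proposition~\ref{proposition:thetaDescentStackInTermsOfSubgroups} gives the moduli description in terms of splittings $(H,\sigma)$, run the finite-\'etale-cover argument there, and then descend smoothness along the finite \'etale surjection $\Spec\ZZ[1/d,\zeta_d]\to\Spec\ZZ[1/d]$, using that smoothness is fppf-local on the target.
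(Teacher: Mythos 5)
Your proof is correct and follows essentially the same route as the paper: the forgetful map $\sA_{g,\delta}(\Theta)\to\sA_{g,\delta}[1/d]$ is finite \'etale because $F^{\rm{sym}}_{U_0}$ is locally constant, and $\sA_{g,\delta}$ is smooth over $\ZZ[1/d]$. Your extra care about descending the representability of $F_{U_0^{\rm{sym}}}$ from $\ZZ[1/d,\zeta_d]$ to $\ZZ[1/d]$ addresses a point the paper leaves implicit, but it does not change the argument.
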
 

\begin{proof}
The forgetful map 
\begin{align*}
\sA_{g,\delta}(\Theta) &\longrightarrow \sA_{g,\delta}[1/d] \\
(A,\cL,H,\sigma) &\longmapsto (A, \cL)
\end{align*}
is finite \'{e}tale, since the \'{e}tale sheaf $F^{\rm{sym}}_{U_0}$ is locally constant. Thus the result follows by again noting that $\sA_{g,\delta}$ is smooth over $\ZZ[1/d]$. 
\end{proof}

If we denote by $\sA_{g,\delta,0}(\delta)$ the stack classifying triples $(A,\cL, H)$ as in Prop. \ref{proposition:thetaDescentStackInTermsOfSubgroups}, then there is a natural forgetful map 
$$
\sA_{g,\delta}(\Theta) \longrightarrow \sA_{g,\delta,0}(\delta), \quad (A,\cL,H,\sigma) \longmapsto (A,\cL,H).
$$
Less trivially, we also have

\begin{theorem}
\label{theorem:nontrivialFiniteEtaleCoverOfThetaDescentStack}
Let $\sA_{g,\delta,0}(2\delta)$ be the stack over $\ZZ[1/d]$ classifying pairs $(A,\cL)$ as in Prop. \ref{proposition:thetaDescentStackInTermsOfSubgroups}, together with a subgroup scheme $H\subseteq K(\cL^{\otimes 2})$ locally isomorphic to $\un{H}(2\delta)$. Then there is a finite \'{e}tale map $$ \sA_{g,\delta,0}(2\delta)\longrightarrow \sA_{g,\delta}(\Theta), $$
defined over $\ZZ[1/d]$. 
\end{theorem}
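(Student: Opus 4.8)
The plan is to write down an explicit $\sA_{g,\delta}[1/d]$-morphism $\Phi\colon\sA_{g,\delta,0}(2\delta)\to\sA_{g,\delta}(\Theta)$ and then to conclude that it is finite \'{e}tale by a cancellation argument. For the latter: the forgetful map $\sA_{g,\delta}(\Theta)\to\sA_{g,\delta}[1/d]$ is finite \'{e}tale by the proposition above, and $\sA_{g,\delta,0}(2\delta)\to\sA_{g,\delta}[1/d]$ is finite \'{e}tale because over $\ZZ[1/d]$ (with $d$ even) the group scheme $K(\cL^{\otimes 2})$ is finite \'{e}tale, and the functor of its subgroup schemes \'{e}tale-locally conjugate to $\un H(2\delta)$ is represented by an open and closed subscheme of the finite \'{e}tale scheme of subgroups of order $2^g d$. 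Since any morphism over $\sA_{g,\delta}[1/d]$ between stacks that are finite \'{e}tale over $\sA_{g,\delta}[1/d]$ is itself finite \'{e}tale, it suffices to construct $\Phi$ over $\sA_{g,\delta}[1/d]$. (Here ``$H\subseteq K(\cL^{\otimes 2})$ locally isomorphic to $\un H(2\delta)$'' is read as: \'{e}tale-locally on $S$ there is an arithmetic isomorphism $K(\cL^{\otimes 2})\simeq\un K(2\delta)$ carrying $H$ onto $\un H(2\delta)$; in particular $H$ is maximal isotropic for $e_{\cL^{\otimes 2}}$.)

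To construct $\Phi$, fix $(A\to S,\cL,H)\in\sA_{g,\delta,0}(2\delta)(S)$. Since $\cL$, hence $\cL^{\otimes 2}$, is totally symmetric, the sheaf $\cL^{\otimes 2}$ admits, \'{e}tale-locally on $S$, a symmetric descent through the isogeny $A\to A/H$ (\cite{Mumford:EqAbv1}, \S 2; \cite{Mumford:EqAbv2}, \S 6); by Thm.\ \ref{theorem:DescentForSymmetricLineBundles} this is the same datum as a symmetric splitting $\tilde\sigma\colon H\to G(\cL^{\otimes 2})$ of the theta group of $\cL^{\otimes 2}$ over $H$. From $\phi_{\cL^{\otimes 2}}=2\phi_{\cL}$ one gets $K(\cL^{\otimes 2})=\phi_{\cL}^{-1}(A^{t}[2])$, so $\eta_2$ is compatible with the projections $p$ via the homomorphism ``multiplication by $2$'' from $K(\cL^{\otimes 2})$ to $K(\cL)$, which is surjective with kernel $A[2]$. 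Put $H':=2H\subseteq K(\cL)$; since $H\simeq\un H(2\delta)$ we get $H'\simeq\un H(\delta)$, of order $d$. Define $\sigma'\colon H'\to G(\cL)$ by $\sigma'(2h):=\eta_2(\tilde\sigma(h))$. This is well defined: for $h\in H[2]=H\cap A[2]$ the element $\tilde\sigma(h)$ is a $2$-torsion lift of $h\in A[2]$ to $G(\cL^{\otimes 2})$, so the triangle computing the theta characteristic gives $\eta_2(\tilde\sigma(h))=e^{\cL}_*(h)=1$, where total symmetry of $\cL$ is used; hence $\eta_2\circ\tilde\sigma$ factors through $H/H[2]=H'$.

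It is then a direct check that $\sigma'$ is a group homomorphism lifting $H'\hookrightarrow K(\cL)$ (compatibility of $\eta_2$ with the projections), that it is injective (its kernel would lie over $H[2]$, which it already kills), that it is symmetric ($\eta_2$ being functorial, it intertwines the automorphisms $\delta_{-1}$, so $\delta_{-1}\sigma'(2h)=\eta_2(\delta_{-1}\tilde\sigma(h))=\eta_2(\tilde\sigma(-h))=\sigma'(-2h)$), and that $H'$ is isotropic since $\im(\sigma')=\eta_2(\tilde\sigma(H))$ is abelian; being of order $d$ inside $K(\cL)$ of order $d^2$, $H'$ is maximal isotropic. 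Thus $(H',\sigma')$ is a maximal symmetric splitting of $G(\cL)$, i.e.\ a section of $\sA_{g,\delta}(\Theta)(S)$ in the description of Prop.\ \ref{proposition:thetaDescentStackInTermsOfSubgroups}. Finally, $(H',\sigma')$ is independent of the local choice of $\tilde\sigma$: two symmetric splittings of $G(\cL^{\otimes 2})$ over $H$ differ by a homomorphism $\chi\colon H\to\GG_m$ with $\chi=\chi\circ[-1]$ (compatibility with $\delta_{-1}$), hence $\chi^2=1$ and $\chi$ takes values in $\mu_2$; and since the restriction of $\eta_2$ to $\GG_m\subseteq G(\cL^{\otimes 2})$ is the squaring map $\lambda\mapsto\lambda^2$ (immediate from the definition of $\eta_2$ via $\nu\colon\cL^{\otimes 4}\stackrel{\simeq}{\rightarrow}[2]^{*}\cL$), one has $\eta_2(\chi(h)\tilde\sigma(h))=\chi(h)^2\,\eta_2(\tilde\sigma(h))=\eta_2(\tilde\sigma(h))$, so $\sigma'$ is unchanged. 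The local constructions therefore glue to a global, functorial $\Phi$, which involves no roots of unity and so is defined over $\ZZ[1/d]$; by the first paragraph $\Phi$ is finite \'{e}tale.

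The one step that requires genuine input is the \'{e}tale-local existence of a symmetric splitting $\tilde\sigma$ of $G(\cL^{\otimes 2})$ over the prescribed Lagrangian $H$, equivalently a symmetric descent of $\cL^{\otimes 2}$ through $A\to A/H$; this is precisely where total symmetry of $\cL$ enters, and I expect it to be the main obstacle to a self-contained argument (one must invoke Mumford's theory of totally symmetric sheaves). Everything else --- the identity $\eta_2(\tilde\sigma(h))=1$ on $H[2]$, the squaring behaviour of $\eta_2$ on $\GG_m$, the compatibility of $\eta_2$ with $\delta_{-1}$, and the finite-\'{e}tale conclusion --- is a routine unwinding of the material of Section \ref{section:Theta-Descent}.
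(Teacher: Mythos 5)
Your proof is correct and follows essentially the same route as the paper's: produce a symmetric splitting of $G(\cL^{\otimes 2})$ over $H$ \'{e}tale-locally, push it forward along $\eta_2$, and observe that the $\mu_2$-valued ambiguity in the choice of symmetric splitting is killed because $\eta_2$ squares the central $\GG_m$. The differences are presentational only --- the paper transports everything to the standard Heisenberg group $G(2\delta)$ via a local symmetric theta structure and reads off the same facts in coordinates, whereas you work intrinsically in $G(\cL^{\otimes 2})$ and make explicit both the factoring through $H/H[2]$ (via the theta-characteristic triangle and total symmetry of $\cL$) and the finite \'{e}tale property (via cancellation over $\sA_{g,\delta}[1/d]$), which the paper leaves implicit.
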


\begin{proof}
Let $(A,\cL)$ be as in Prop. \ref{proposition:thetaDescentStackInTermsOfSubgroups} and let $H\subseteq K(\cL^{\otimes 2})$ be a subgroup scheme locally isomorphic to $\un{H}(2\delta)$. Locally we may choose a symmetric theta structure $\beta: G(\cL^{\otimes 2})\rightarrow G(2\delta)$ lying over the given trivializing isomorphism $\alpha_0: H \simeq \un{H}(2\delta) = \un{H}(2\delta)\times \{0\} \subseteq \un{K}(2\delta)$. Choose a splitting $\sigma$ of $G(2\delta)$ over $\un{H}(2\delta)$. This splitting must necessarily be of the form
$
\sigma(h) = (\sigma_*(h),h,0),
$
for some homomorphism $\sigma_*: \un{H}(2\delta)\rightarrow \GG_m$. If we moreover require the splitting to be symmetric, i.e. $\delta_{-1}\sigma(h) = \sigma(-h)$, then we must have $\sigma_* = \sigma_*^{-1}$, i.e. $\sigma_*$ factors through a homomorphism $\sigma_*: \un{H}(2\delta)\rightarrow \mu_2$. Now the map $\eta_2:G(\cL^{\otimes 2})\rightarrow G(\cL)$ of \eqref{equation:eta2}, goes over via the symmetric theta structure $\beta$ to the map $H_2: G(2\delta)\rightarrow G(\delta)$, $(\lambda, z)\mapsto (\lambda^2, 2z)$(\cite{Mumford:EqAbv1}, \S 2). Therefore 
$$
H_2(\un{H}(2\delta),\sigma) = (\un{H}(\delta), \sigma_{\rm{can}}),
$$
and in particular the splitting $H_2(\un{H}(2\delta),\sigma)$ of $G(\delta)$ over $\un{H}(\delta)$ does not depend on the choice of $\sigma$. Thus the splitting $\eta_2(H,\beta^{-1}\sigma\alpha_0)$, a priori only defined locally, in fact descends to a well-defined maximal symmetric splitting $(H',\sigma')$ of $G(\cL)$ over $H'$. The desired map is then given by $(A,\cL,H) \mapsto (A,\cL, H',\sigma')$.  
\end{proof}

The composition of the map of Thm. \ref{theorem:nontrivialFiniteEtaleCoverOfThetaDescentStack} with the forgetful map $\sA_{g,\delta}(\Theta) \rightarrow \sA_{g,\delta,0}(\delta)$ is given by
$$
V: \sA_{g,\delta,0}(2\delta) \rightarrow \sA_{g,\delta,0}(\delta), \quad (A,\cL, H) \mapsto (A,\cL, H/H[2]),
$$
thus we may view $\sA_{g,\delta}(\Theta)$ as an `intermediate' moduli stack
\begin{equation}
\label{equation:thetaDescentStructureSandwich}
\begin{tikzcd}
\sA_{g,\delta,0}(2\delta) \arrow[bend left=20]{rr}{V} \arrow{r}& \sA_{g,\delta}(\Theta) \arrow{r}& \sA_{g,\delta,0}(\delta),
\end{tikzcd}
\end{equation}
where the maps of this commutative diagram are compatible with those of \eqref{equation:thetaStructureSandwich} with respect to the natural forgetful maps.  

\begin{example}
\label{example:GenusOne}
Suppose $g=1$, and let $\sM:=\sA_{1,1}$ be the moduli stack of elliptic curves. The totally symmetric, relatively ample, normalized invertible sheaves over an elliptic curve $E\rightarrow S$ are all of the form 
$$
\cL_m:= \cO_E(m\,e)\otimes\left(\Omega^1_{E/S}\right)^{\otimes m},
$$
of type $\delta = (m) \in 2\ZZ_{>0}$ (\cite{Polishchuck:Determinants}, \S 5.1). Since $\cL_m = \cL_1^{\otimes m}$, we clearly have an identification $\sM = \sA_{1,m}$, for all $m\in 2\ZZ_{>0}$. We also have $\sA_{1,m,0}(m) = \sM_0(m)$, where for any $N\in \ZZ_{>0}$ we denote by $\sM_0(N)$ the moduli stack of pairs $(E,H)$ of an elliptic curve and a subgroup scheme of $E[N]$, locally isomorphic to $\un{\ZZ/N\ZZ}$. The diagram \eqref{equation:thetaDescentStructureSandwich} thus specializes to
$$
\begin{tikzcd}
\sM_0(2m) \arrow[bend left=20]{rr}{V} \arrow{r}& \sA_{1,(m)}(\Theta) \arrow{r}& \sM_0(m).
\end{tikzcd}
$$
As is well-known, the degree of the finite \'{e}tale map $V$ in this case  is two. But the degree of the forgetful map $\sA_{1,m}(\Theta) \rightarrow \sM_0(m)$ is also two, since there are exactly two ways to symmetrically lift the cyclic subgroup $H(m)\subseteq K(m)$ to a subgroup of $G(m)$ (the number of ways being equal to the number of homomorphisms $\ZZ/m\ZZ\rightarrow \mu_2$). The map of Thm. \ref{theorem:nontrivialFiniteEtaleCoverOfThetaDescentStack} therefore induces an `accidental' isomorphism 
\begin{equation}
\label{equation:accidentalIsomorphism}
\sA_{1,m}(\Theta) \simeq \sM_0(2m)
\end{equation}
for all $m\in 2\ZZ_{>0}$. In general, this accidental isomorphism does not occur in higher degree. For example, if $\delta = (n,\ldots, n)\in \ZZ^g$, with $n>0$ even, then $\sA_{g,\delta}(\Theta) \rightarrow \sA_{g,\delta,0}(\delta)$ has degree $2^g$ but the map $V$ has degree $2^{\frac{g(g+1)}{2}}$.
\end{example}

Finally, there is also a natural, very important `descent' map 
\begin{equation}
\label{equation:DescentMap}
\begin{aligned}
\Des:\sA_{g,\delta}(\Theta) &\longrightarrow \sX_{g,1} \\
(A,\cL,H,\sigma) &\longmapsto (A/H, \Theta),
\end{aligned}
\end{equation}
where as usual $\Theta$ denotes the (relatively ample, normalized) symmetric invertible sheaf of degree 1 over $B=A/H$ corresponding to the maximal symmetric splitting $(H,\sigma)$ under Theorem \ref{theorem:DescentForSymmetricLineBundles}. Note that $\Theta$ is {\em not} totally symmetric, since it is of degree 1.

\subsection{Analytic Theory}
\label{section:analyticTheoryOfThetaStructures} Over the category of analytic spaces theta structures can be computed explicitly. We briefly recall how to do so for (full) theta structures, following \cite{BL}, \S 6, and then employ the same techniques to compute the `descent map' \eqref{equation:DescentMap} analytically. Therefore, let $A$ be an abelian variety of dimension $g$ over $\CC$, and suppose $A$ is uniformized as $A = V/\Lambda$, where $V$ is a complex vector space of of dimension $g$ and $\Lambda \subseteq V$ is a rank $2g$ lattice. Let $(\cdot,\cdot): V\times V \rightarrow \CC$ be a Hermitian form such that the alternating form $\psi = \im (\cdot,\cdot)$ is integral on $\Lambda$ and  suppose $\chi:\Lambda\rightarrow S^1$ satisfies $$\chi(\lambda_1 + \lambda_2)\chi(\lambda_1)^{-1}\chi(\lambda_2)^{-1} = e^{\pi i \psi(\lambda_1,\lambda_2)}.$$
Define the line bundle $L( (\cdot,\cdot),\chi)$ as the quotient of $V\times \CC$ by the action of $\Lambda$ given by
$$
\lambda(v,t) = (\chi(\lambda)e^{\pi (v,\lambda) + \frac{\pi}{2} (\lambda,\lambda)}, v + \lambda).
$$
By the Appel-Humbert Theorem (\cite{AbelianV}, \S 2), all line bundles  on $A$ are of this form. The condition of $L$ being ample is equivalent to the Hermitian form $(,)$ being positive definite (\cite{AbelianV}, Thm. of Lefschetz, App. to \S 2). Suppose then $L=L( (\cdot,\cdot),\chi)$ is ample, so that $\psi$ is non-degenerate, and choose a decomposition $\Lambda = \Lambda_1\oplus\Lambda_2$ into maximal isotropic spaces for $\psi$. Let $\mathrm{pr}: V \rightarrow V/\Lambda$ be the canonical projection, let 
$
\Lambda(L):= \mathrm{pr}^{-1}(K(L)) \subseteq V,
$
and let $\Lambda(L) = \Lambda(L)_1\oplus\Lambda(L)_2$ be the decomposition induced by that of $\Lambda$. Define the {\em covering theta group} of $L$ (\cite{BL}, \S 6.1)  to be the set
$$
\widetilde{G}(L):= \{(t,w): t\in\CC^{\times}, w\in\Lambda(L)\}
$$
with group operation given by
$
(t_1,w_1)(t_2,w_2) = (t_1t_2 e^{\pi (w_2,w_1)},w_1 + w_2).
$

\begin{proposition}[\cite{BL}, Thm. 6.1.3]
There is an isomorphism of central extensions 
$$
\begin{tikzcd}
1 \arrow{r} & \CC^{\times} \arrow{r} \arrow{d}{\mathrm{id}} &\widetilde{G}(L)/s_L(\Lambda) \arrow{r} \arrow{d} & \Lambda(L)/\Lambda\arrow{r} \arrow{d} & 1 \\
1 \arrow{r} & \CC^{\times}\arrow{r} &G(L) \arrow{r} & K(L) \arrow{r} & 1, 
\end{tikzcd}
$$
where $s_L: \Lambda \rightarrow \widetilde{G}(L)$ is the homomorphism given by 
$
\lambda \mapsto (\chi(\lambda)e^{\frac{\pi}{2} (\lambda,\lambda)},\lambda).
$
\end{proposition}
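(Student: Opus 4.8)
\noindent\emph{Proof proposal.} The plan is to regard both middle terms as central extensions of $K(L)$ by $\CC^{\times}$ --- the top one via the canonical identification $\Lambda(L)/\Lambda = K(L)$ --- and to produce an explicit homomorphism between them that restricts to the identity on $\CC^{\times}$ and covers the identity on $K(L)$; by the short five lemma any such morphism of short exact sequences is automatically an isomorphism, so once the map is in hand there is nothing more to check. One could also argue abstractly: since $K(L)$ is finite and $\CC^{\times}$ is divisible, hence an injective $\ZZ$-module, $\Ext^1_{\ZZ}(K(L),\CC^{\times})=0$; consequently a central extension of $K(L)$ by $\CC^{\times}$ with trivial commutator pairing is abelian, the commutator map on $H^2(K(L),\CC^{\times})$ is injective, and two such extensions with the same commutator pairing are isomorphic over the identity. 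It would then suffice to observe that both extensions have commutator pairing $e_L$. I will nevertheless write down the explicit map, as this is the form that is needed when one varies the abelian variety in a family.

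First I would record the elementary group theory. That $s_L$ is a homomorphism $\Lambda\to\widetilde{G}(L)$ follows from the defining cocycle relation for $\chi$, the identity $(\lambda_1,\lambda_2)-(\lambda_2,\lambda_1)=2i\,\psi(\lambda_1,\lambda_2)$, and the integrality of $\psi$ on $\Lambda$, by a one-line computation comparing $s_L(\lambda_1)s_L(\lambda_2)$ with $s_L(\lambda_1+\lambda_2)$. Next, the commutator pairing of $\widetilde{G}(L)$ in the tautological section $w\mapsto(1,w)$ is $(w_1,w_2)\mapsto e^{\pi[(w_2,w_1)-(w_1,w_2)]}=e^{-2\pi i\,\psi(w_1,w_2)}$; because $\Lambda(L)=\{v\in V:\psi(v,\Lambda)\subseteq\ZZ\}$, this is trivial whenever one of its arguments lies in $\Lambda$, so $s_L(\Lambda)$ is central --- a fortiori normal --- in $\widetilde{G}(L)$, and it meets $\CC^{\times}=\{(t,0)\}$ only in the identity since $s_L(\lambda)\in\CC^{\times}$ forces $\lambda=0$. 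Hence $\widetilde{G}(L)/s_L(\Lambda)$ is a central extension of $\Lambda(L)/\Lambda=K(L)$ by $\CC^{\times}$ whose commutator pairing is the descent of $e^{-2\pi i\,\psi(\cdot,\cdot)}$, i.e.\ $e_L$.

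Now I would construct the comparison homomorphism. Over $\CC$, the Appel--Humbert description recalled above identifies $G(L)$ with the group of pairs $(z,\varphi)$, $z\in K(L)$ and $\varphi\colon T_z^{*}L\stackrel{\sim}{\rightarrow}L$, where sections of $L$ are the holomorphic functions $\theta\colon V\to\CC$ with $\theta(v+\lambda)=a_\lambda(v)\theta(v)$, $a_\lambda(v)=\chi(\lambda)e^{\pi(v,\lambda)+\frac{\pi}{2}(\lambda,\lambda)}$, and composition is $(z_1,\varphi_1)(z_2,\varphi_2)=(z_1+z_2,\varphi_1\circ T_{z_1}^{*}\varphi_2)$. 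To $(t,w)\in\widetilde{G}(L)$ I would assign the point $z=\mathrm{pr}(w)\in K(L)$ together with the isomorphism $\varphi_{t,w}$ acting on sections by $\theta\mapsto[\,v\mapsto t\,c_w(v)\,\theta(v+w)\,]$, where $c_w$ is the nowhere-vanishing holomorphic function on $V$ (unique up to a scalar absorbed into $t$, and built out of $e^{\pm\pi(v,w)}$ and a constant) chosen so that $c_w(v+\lambda)/c_w(v)$ cancels exactly the discrepancy $a_\lambda(v+w)/a_\lambda(v)=e^{\pi(w,\lambda)}$; this is precisely where one uses $w\in\Lambda(L)$, i.e.\ $\psi(w,\Lambda)\subseteq\ZZ$, so that $\varphi_{t,w}$ really maps sections to sections. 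A direct computation --- pulling $\varphi_{t_2,w_2}$ back along $T_{w_1}$ and composing with $\varphi_{t_1,w_1}$ --- shows that the cocycle $e^{\pi(w_2,w_1)}$ built into the group law of $\widetilde{G}(L)$ is exactly what makes $(t,w)\mapsto(\mathrm{pr}(w),\varphi_{t,w})$ multiplicative; it is manifestly the identity on $\CC^{\times}$ and covers the canonical identification $\Lambda(L)/\Lambda=K(L)$, and $(t,w)$ lies in its kernel if and only if $w\in\Lambda$ and $\varphi_{t,w}$ is the canonical automorphism of $L$, which forces $t=\chi(w)e^{\frac{\pi}{2}(w,w)}$ --- that is, the kernel is exactly $s_L(\Lambda)$. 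Passing to the quotient and invoking the short five lemma yields the desired isomorphism of central extensions.

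The hard part will be nothing conceptual but rather the bookkeeping: carrying the Hermitian form $(\cdot,\cdot)$, its imaginary part $\psi$, and the various $\pi$- and $\frac{\pi}{2}$-exponential factors consistently, pinning down $c_w$, and invoking the two defining facts $\psi(\Lambda,\Lambda)\subseteq\ZZ$ and $\Lambda(L)=\{v:\psi(v,\Lambda)\subseteq\ZZ\}$ at exactly the right places --- the first is what makes $s_L$ a homomorphism and trivializes the commutator along $\Lambda$, the second is what makes $\varphi_{t,w}$ land in sections of $L$ and what makes the multiplicativity computation close up. A secondary point worth spelling out carefully is the identification, over $\CC$, of Mumford's scheme-theoretic theta group $G(L)$ (automorphisms of $L$ covering translations) with the analytic picture in terms of automorphy factors: this is immediate from Appel--Humbert, but it is the bridge that connects this analytic statement to the algebraic theta group used in the rest of the paper.
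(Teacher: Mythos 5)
Your argument is correct and is essentially the standard proof of this statement from the cited reference \cite{BL}; the paper itself states the proposition without proof, simply quoting [\cite{BL}, Thm.\ 6.1.3]. The two key verifications you isolate are exactly the right ones: integrality of $\psi$ on $\Lambda$ makes $s_L$ a homomorphism and kills the commutator $e^{-2\pi i\,\psi(\cdot,\cdot)}$ along $\Lambda$, while the defining property $\psi(\Lambda(L),\Lambda)\subseteq\ZZ$ is what lets the automorphy-factor map $(t,w)\mapsto(\mathrm{pr}(w),\varphi_{t,w})$ land in $G(L)$ and close up under multiplication. The only point requiring care in the write-up is the one you already flag: the normalization of the correction factor $c_w$ must be fixed consistently so that the kernel comes out as exactly $\{(\chi(\lambda)e^{\frac{\pi}{2}(\lambda,\lambda)},\lambda)\}$ rather than some other lift of $\Lambda$, but since $c_w$ is unique up to a scalar absorbed into $t$ this is pure bookkeeping and not a gap.
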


Suppose now that $L$ is also totally symmetric (so that $\chi\equiv 1$) and of type $\delta$. A full level structure $\alpha$ on $K(L) = \Lambda(L)/\Lambda$ can be lifted to a homomorphism $\widetilde{\alpha}: \Lambda(L) \rightarrow K(\delta)$ factoring through $\Lambda$. Let \begin{equation}
\begin{aligned}
\label{equation:distinguishedThetaStructure}
\widetilde{\beta}_0: \widetilde{G}(L) &\longrightarrow G(\delta)\\
(t,w) &\longmapsto ( t\,e^{-\pi i \psi(w_1,w_2) - \frac{\pi}{2}(w,w)}, \widetilde{\alpha}(w)),
\end{aligned}
\end{equation}
where we wrote $w = w_1 + w_2$ in terms of the chosen decomposition $\Lambda(L) = \Lambda(L)_1\oplus\Lambda(L)_2$. The map $\widetilde{\beta}_0$ is a homomorphism, inducing a symmetric theta structure $\beta_0: G(L) \stackrel{\simeq}\rightarrow G(\delta)$ lying over the given level structure $\alpha$ (\cite{BL}, Lemma 6.6.5). We call $\beta_0$ the {\em distinguished theta structure} on $(A,L,\alpha)$.

\subsection{}
\label{section:thetaDescentAnalyticTheory}
We now give analytic descriptions of the moduli stacks introduced in Sec. \ref{subsection:ModuliStacks}, and the maps between them (especially \eqref{equation:DescentMap}). For ease of exposition, we specialize to the case of elliptic curves (i.e. $g=1$) but the higher-dimensional cases can be similarly worked out using the analytic theory of theta structures of Sec. \ref{section:analyticTheoryOfThetaStructures}. 
Consider then the elliptic curve $E_{\tau} = \CC/\Lambda_{\tau}$, where $\Lambda_{\tau} = \langle \tau, 1 \rangle$, $\tau \in \mathfrak{h}$.  The ample totally symmetric line bundles are all of the form $L = L((\cdot,\cdot)_m, 1)$, $m\in 2\ZZ_{>0}$, where $(\cdot,\cdot)_m$ is the hermitian form associated to the complex structure $\tau$ and the symplectic form $\psi = \im (\cdot,\cdot)_m$ of matrix $\smalltwobytwo{0}{m}{-m}{0}$ with respect to the $\ZZ$-basis $\{\tau,1\}$ of $\Lambda_{\tau}$. Recall that the moduli stack $\sA^{\rm{an}}_{1,1}$ of elliptic curves over an analytic space (which we identify with the analytification of the smooth stack $\sA_{1,1}\otimes_{\ZZ}\CC$) is given by the analytic quotient stack $\left[\SL_2(\ZZ)\backslash\mathfrak{h}\right]$. Similarly, let $\sA^{\rm{an}}_{1,m} =\left(\sA_{1,m}\otimes_{\ZZ[1/m]}\CC\right)^{\rm{an}} $, $m\in 2\ZZ_{>0}$, be the moduli stack of elliptic curves over an analytic space, together with a polarization of type $(m)$.

\begin{proposition}
For any $m\in 2\ZZ_{>0}$, $\sA^{\rm{an}}_{1,m}$ is the analytic quotient stack $\left[\SL_2(\ZZ)\backslash\mathfrak{h}\right]$. 
\end{proposition}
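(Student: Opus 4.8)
The plan is to show that the only moduli attached to a type-$(m)$ polarization on an elliptic curve is the elliptic curve itself, so that the natural forgetful map $\sA^{\rm an}_{1,m}\to\sA^{\rm an}_{1,1}$ is an isomorphism of analytic stacks, and the latter is well known to be $[\SL_2(\ZZ)\backslash\mathfrak h]$. First I would recall, as stated in Example \ref{example:GenusOne}, that on any elliptic curve $E\to S$ there is exactly one relatively ample, normalized invertible sheaf of type $(m)$ up to isomorphism, namely $\cL_m = \cO_E(m\,e)\otimes(\Omega^1_{E/S})^{\otimes m}$, and that $\cL_m=\cL_1^{\otimes m}$. Concretely, for $E_\tau=\CC/\Lambda_\tau$ the polarization of type $(m)$ is $m$ times the principal polarization, corresponding to the Hermitian form $(\cdot,\cdot)_m$ with $\psi=\im(\cdot,\cdot)_m$ of matrix $\smalltwobytwo{0}{m}{-m}{0}$ in the basis $\{\tau,1\}$. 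Since principal polarizations on an elliptic curve are unique, so is the type-$(m)$ polarization, and hence the datum of the polarization carries no information beyond $E$ itself.

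Next I would make the stack-level statement precise. The forgetful morphism $\sA_{1,m}\to\sA_{1,1}$, $(E,\cL)\mapsto E$ (equivalently $(E,\phi)\mapsto E$), is well defined over $\ZZ[1/m]$; I claim it is an equivalence of stacks after base change to $\CC$. It is essentially surjective because every elliptic curve admits $\cL_m$; it is full and faithful on $S$-points because a morphism of pairs $(E,\cL)\to(E',\cL')$ in $\sA_{1,m}(S)$ is the same as an isomorphism $E\to E'$ respecting the normalized sheaves, and any isomorphism $E\xrightarrow{\sim}E'$ automatically pulls $\cL'_m$ back to $\cL_m$ (both being canonically $\cO(m\,e)\otimes(\Omega^1)^{\otimes m}$), compatibly with the normalizations, since a normalized invertible sheaf has no automorphisms. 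Thus $\sA_{1,m}^{\rm an}\xrightarrow{\sim}\sA_{1,1}^{\rm an}=[\SL_2(\ZZ)\backslash\mathfrak h]$, which is the assertion. Alternatively, and perhaps more in the spirit of the analytic discussion just given, one argues uniformization-theoretically: a pair $(E_\tau,\cL)$ of type $(m)$ over $\CC$ is determined by $\tau\in\mathfrak h$ up to the action of $\SL_2(\ZZ)$ on $\mathfrak h$ by $\tau\mapsto\frac{a\tau+b}{c\tau+d}$, because an isomorphism of such pairs is induced by a $\ZZ$-linear change of lattice basis preserving the symplectic form $\psi$ of matrix $\smalltwobytwo{0}{m}{-m}{0}$, and such a change of basis is exactly an element of $\SL_2(\ZZ)$ (the factor of $m$ scales out of the determinant condition). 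Families then globalize this to the quotient stack statement.

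The only genuinely nontrivial point — really the one that could be called the main obstacle — is verifying that no additional automorphisms or isomorphisms are introduced by the polarization, i.e. that the forgetful functor is fully faithful and not merely essentially surjective. This reduces to the two facts emphasized in the setup: the polarization $\phi_{\cL}$ is uniquely determined by $E$ (uniqueness of principal polarizations, scaled by $m$), and the normalized sheaf $\cL_m$ has no automorphisms, so there is no stacky difference between remembering $(E,\cL)$ and remembering $E$. Granting these, the proof is immediate: $\sA^{\rm an}_{1,m}\simeq\sA^{\rm an}_{1,1}\simeq[\SL_2(\ZZ)\backslash\mathfrak h]$. I expect the author's proof to be a one-line invocation of exactly this, citing the identification $\sM=\sA_{1,m}$ of Example \ref{example:GenusOne} together with the standard analytic description of $\sA^{\rm an}_{1,1}$.
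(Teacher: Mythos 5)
Your proposal is correct and matches the paper's proof, which is a two-line argument combining exactly the two routes you describe: the citation of Mumford's GIT \S 7.A identifying $\sA^{\rm{an}}_{1,m}$ with $[\Gamma\backslash\mathfrak{h}]$ for $\Gamma = \{M \in \GL_2(\ZZ) : M^t J_m M = J_m\}$ with $J_m = m\smalltwobytwo{0}{1}{-1}{0}$, so that the factor $m$ cancels and $\Gamma = \SL_2(\ZZ)$, together with the remark that it alternatively follows from the identification $\sM = \sA_{1,m}$ of Example \ref{example:GenusOne}. You have simply fleshed out the details the author leaves implicit.
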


\begin{proof}
By \cite{Mumford-GIT} \S 7.A, $\sA^{\rm{an}}_{1,m}$ is the analytic quotient stack $\left[\Gamma\backslash\mathfrak{h}\right]$, where $\Gamma = \{ M\in \GL_2(\ZZ) : M^tJ_mM = J_m\}$, where $J_m = \smalltwobytwo{0}{m}{-m}{0} = m\smalltwobytwo{0}{1}{-1}{0}$. Thus $\Gamma = \Sp_2(\ZZ) = \SL_2(\ZZ)$. Alternatively, this follows from the discussion of Ex. \ref{example:GenusOne}.   
\end{proof}

Next, consider the moduli stack $\sA_{1,m}(\Theta,m)$ of triples $(E,\cL,\beta)$ of Def. \ref{definition:moduliStackOfThetaStructures}.

\begin{proposition}
\label{proposition:thetaStructureModuliStack}
Let $m\in 2\ZZ_{>0}$, and let $\sA^{\rm{an}}_{1,m}(\Theta,m)$ be the analytification of the smooth stack $\sA_{1,m}(\Theta,m)\otimes_{\ZZ[1/m,\zeta_m]}\CC$. Then $\sA^{\rm{an}}_{1,m}(\Theta,m)$ is the analytic quotient stack $\left[\Gamma(m,2m)\backslash\mathfrak{h}\right]$, where 
$$
\Gamma(m,2m):= \{ \smalltwobytwo{a}{b}{c}{d} \in \SL_2(\ZZ): a,d\equiv 1\, (m)\text{ and } b,c \equiv 0 \,(2m) \}.
$$
\end{proposition}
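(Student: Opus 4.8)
The plan is to compute the analytic moduli stack by transporting the torsor structure of Proposition \ref{proposition:ModuliStackOfThetaStructuresIsAScheme} through the uniformization of $\mathfrak{h}$, using the distinguished theta structure $\beta_0$ of \eqref{equation:distinguishedThetaStructure} as a basepoint. First I would recall that $\sA_{1,m}^{\rm{an}}$ is $[\SL_2(\ZZ)\backslash\mathfrak{h}]$ (the preceding Proposition), and that $\sA_{1,m}(\Theta,m)$ is a torsor over $\sA_{1,m}(\delta)$ under the $2$-torsion of the universal elliptic curve, which in turn is finite \'etale over $\sA_{1,m}$ via the level structure $\alpha$. Hence $\sA_{1,m}^{\rm{an}}(\Theta,m)$ is a quotient $[\Gamma'\backslash\mathfrak{h}]$ for some finite-index subgroup $\Gamma'\subseteq\SL_2(\ZZ)$, and the whole problem is the identification $\Gamma'=\Gamma(m,2m)$.

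To pin down $\Gamma'$ I would fix the standard lattice $\Lambda_\tau=\langle\tau,1\rangle$ with its decomposition $\Lambda_\tau=\ZZ\tau\oplus\ZZ$ into $\psi$-isotropic lines, take the totally symmetric ample $L=L((\cdot,\cdot)_m,1)$, and equip $(E_\tau,L)$ with the distinguished symmetric theta structure $\beta_0(\tau)$ coming from the natural level structure $\widetilde\alpha$ on $\Lambda(L)/\Lambda_\tau=\tfrac1m\Lambda_\tau/\Lambda_\tau$. This gives a point of $\sA_{1,m}(\Theta,m)$ over $\tau$, hence a map $\mathfrak{h}\to\sA_{1,m}^{\rm{an}}(\Theta,m)$. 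For $\gamma=\smalltwobytwo{a}{b}{c}{d}\in\SL_2(\ZZ)$ the curves $E_{\gamma\tau}$ and $E_\tau$ are identified, and $\gamma\in\Gamma'$ precisely when this identification carries $\beta_0(\gamma\tau)$ to $\beta_0(\tau)$, i.e.\ when $\beta_0(\gamma\tau)$ and $\beta_0(\tau)$ differ by the identity automorphism rather than a nontrivial element of $\underline{K}(\delta)[2]$. The core computation is therefore: write down how $\beta_0$ changes under $\gamma$, using \eqref{equation:distinguishedThetaStructure}, and read off the condition on $a,b,c,d$ modulo $m$ and $2m$. The integrality of the exponent $-\pi i\psi(w_1,w_2)-\tfrac\pi2(w,w)$ after a change of symplectic basis is what produces the congruences $a\equiv d\equiv1\ (m)$ from the quadratic (half-form) part and $b\equiv c\equiv0\ (2m)$ from the off-diagonal part; this is exactly the classical computation underlying Igusa's groups $\Gamma(m,2m)$, and I would cite \cite{BL}, \S6.6 and \cite{Mumford:EqAbv1}, \S2 for the relevant formulas rather than redo them.

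The main obstacle is bookkeeping rather than conceptual: one must correctly track the two separate contributions to the cocycle describing $\gamma\mapsto \beta_0(\gamma\tau)\beta_0(\tau)^{-1}\in\underline{K}(\delta)[2]$, namely the change in the isotropic decomposition $\Lambda(L)=\Lambda(L)_1\oplus\Lambda(L)_2$ (responsible for the $b,c$ congruences) and the change in $\widetilde\alpha$ together with the symmetry normalization $\chi\equiv1$ (responsible for the $a,d$ congruences), and verify that their vanishing is jointly equivalent to membership in $\Gamma(m,2m)$ as defined. Once this cocycle is computed, one checks that $\Gamma(m,2m)$ acts freely enough that the quotient stack is well-defined and that the degree $[\SL_2(\ZZ):\Gamma(m,2m)]$ matches the degree of the finite \'etale cover $\sA_{1,m}^{\rm{an}}(\Theta,m)\to\sA_{1,m}^{\rm{an}}$ predicted by Proposition \ref{proposition:ModuliStackOfThetaStructuresIsAScheme} (the $2$-torsion torsor times the level-$\delta$ cover), which serves as a sanity check. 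Finally, one notes the result is independent of the chosen primitive root $\zeta_m$ since different choices give isomorphic stacks over $\CC$, so the analytification is unambiguous.
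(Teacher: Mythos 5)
Your overall strategy is the one the paper uses: take the distinguished symmetric theta structure $\beta_0$ of \eqref{equation:distinguishedThetaStructure} as a basepoint over $\mathfrak{h}$ and identify $\Gamma'$ as its stabilizer under $\SL_2(\ZZ)$, with the congruences falling out of the exponent in $\widetilde{\beta}_0$. Two points, however, deserve attention. First, the step ``hence $\sA^{\rm{an}}_{1,m}(\Theta,m)$ is a quotient $[\Gamma'\backslash\mathfrak{h}]$'' does not follow from the finite \'etaleness alone: a finite \'etale cover of $[\SL_2(\ZZ)\backslash\mathfrak{h}]$ is of this form only if it is \emph{connected} (compare $\sX_{1,1}$, which is finite \'etale over $\sA_{1,1}$ but has two components). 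The paper closes this by observing that $\sA_{1,2m}(2m)$ is geometrically irreducible and surjects onto $\sA_{1,m}(\Theta,m)$ via \eqref{equation:thetaStructureSandwich}; alternatively, your ``sanity check'' that $[\SL_2(\ZZ):\Gamma(m,2m)]$ equals the degree of the cover would also establish connectedness, but then it must be promoted from a sanity check to an actual step of the argument (and one should note it presupposes having already bounded the stabilizer from below).

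Second, your bookkeeping of where the congruences come from is not quite right. In the paper's computation the full set of mod-$m$ congruences ($a,d\equiv 1$ and $b,c\equiv 0 \pmod m$) comes from requiring $\gamma$ to preserve the level structure $\alpha_0$, i.e.\ $\gamma\in\Gamma(m)$; the quadratic exponent $e^{-\pi i m w_1 w_2}$ then imposes $ab\equiv cd\equiv 0$ and $ad+bc\equiv 1 \pmod{2m}$, and a short argument using that $m$ is even (so $m+1$ is a unit mod $2m$) upgrades only $b,c$ to congruences mod $2m$, leaving $a,d$ at mod $m$. Your proposed split (quadratic part giving $a,d\equiv 1\ (m)$, off-diagonal part giving $b,c\equiv 0\ (2m)$) would not reproduce the group $\Gamma(m,2m)$ correctly if followed literally, and the cocycle $\gamma\mapsto\beta_0(\gamma\tau)\beta_0(\tau)^{-1}$ takes values in all of $\Aut^{\rm{sym}}_{\GG_m}G(\delta)$, landing in $\un{K}(\delta)[2]$ only after the level-structure condition is imposed. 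These are fixable, but the final deduction of the exact congruences is the substantive content of the proposition and should be carried out rather than delegated entirely to the references.
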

\begin{proof}
Let   $
\sA_{1,2m}(2m)\rightarrow \sA_{1,m}(\Theta,m),
$
be the finite \'{e}tale surjective map appearing in \eqref{equation:thetaStructureSandwich}. The stack $\sA_{1,2m}(2m)$ is representable by a geometrically irreducible $\ZZ[1/m,\zeta_m]$-scheme, and  the analytification of its complex fiber is the Riemann surface $$\sA^{\rm{an}}_{1,2m}(2m) = \Gamma(2m)\backslash\mathfrak{h},$$ where $\Gamma(2m)$ is the principal congruence subgroup of level $2m$. By continuity, the analytification $\sA^{\rm{an}}_{1,m}(\Theta,m)$ must also be connected. Any connected finite cover of $\left[\SL_2(\ZZ)\backslash\mathfrak{h}\right]$ must be of the form $\left[\Gamma'\backslash\mathfrak{h}\right]$, for some subgroup $\Gamma'\subseteq\SL_2(\ZZ)$. The fact that $\Gamma' = \Gamma(m,2m)$ is stated without proof in \cite{Mumford-GIT}, \S 7.A, and is due to Igusa. We show how to obtain this non-trivial result since the same ideas will be used in the proof of Thm. \ref{theorem:Gamma0ModuliStack} below. Recall that $\Gamma(m)$ is the stabilizer of the full level $m$ structure given by 
\begin{align*}
\alpha_0: \frac{1}{m}\Lambda_{\tau}/\Lambda_{\tau} &\longrightarrow \ZZ/m\ZZ \times \ZZ/m\ZZ \\
\tau/m &\longmapsto (1,0) \\
1/m & \longmapsto (0,1),
\end{align*}
under the action of $\gamma\in\SL_2(\ZZ)$ given by $\gamma^T \mod m$. Similarly, $\Gamma'$ is the stabilizer of the distinguished theta structure $\beta_0$  \eqref{equation:distinguishedThetaStructure} induced by 
\begin{align*}
\widetilde{\beta}_0: \widetilde{G}(L((\cdot,\cdot)_m,1)) &\longrightarrow G(m)\\
(t,w) &\longmapsto (t\,e^{-\pi i m w_1 w_2 - \frac{\pi}{2}(w,w)_m}, \alpha_0(w))
\end{align*}
where we wrote $w = \tau w_1 + w_2 \in \frac{1}{m}\Lambda_{\tau}/\Lambda_{\tau}$. As stated above, for any $\gamma = \smalltwobytwo{a}{b}{c}{d}\in \SL_2(\ZZ)$ we have $\alpha_0(\gamma^T w) = \alpha_0(w)$ if and only if $\gamma \in \Gamma(m)$, the principal congruence subgroup. Moreover,
$$
m \gamma^T(w)_1\gamma^T(w)_2 = m(aw_1 + cw_2)(bw_1 + dw_2) = m(abw_1^2 + (ad+bc)w_1w_2 + cdw_2^2).
$$
Write $w_i = u_i/m \in \frac{1}{m}\Lambda_{\tau}$, with $u_i\in \ZZ$. Then
$$
e^{-\pi im(abw_1^2 + (ad+bc)w_1w_2 + cdw_2^2)} = e^{-\frac{2\pi i}{2m}( abu_1^2 + (ad+bc)u_1u_2 + cdu_2^2)}.
$$  
In order for this expression to be equal to $e^{-\pi i m w_1w_2}$ we need
$$
ab\equiv cd \equiv 0\, (2m), \quad ad + bc \equiv 1\, (2m).
$$
We know already that $a,d\equiv 1 (m)$ and that $b,c \equiv 0(m)$, since $\gamma \in \Gamma(m)$. Thus $a \equiv 1,m+1 (2m)$ and $b\equiv 0,m (2m)$. If $a\equiv 1 (2m)$ then the first equation is satisfied if $b\equiv 0 (2m)$. If  $a\equiv m+1 (2m)$ then either  $b\equiv 0(2m)$ or otherwise we have
$
(m+1)m \equiv 0\, (2m),
$
which is impossible, since $m+1$ is a unit in $2m$ ($m$ is even). Therefore $b\equiv 0 (2m)$ and the same holds for $c$. Since $ad-bc=1$ as integers, the last congruence holds as well. We conclude that the stabilizer of $(E_{\tau}, L((,)_m,1), \beta_0)$ is the group $\Gamma'=\Gamma(m,2m)$.
\end{proof}

\begin{remark}
Note that if $m\geq 4$ the action of $\Gamma(m,2m)$ on $\mathfrak{h}$ has  no non-trivial stabilizers, so that the quotient $\Gamma(m,2m)\backslash\mathfrak{h}$ is actually a Riemann surface. This is consistent with  Rmk. \ref{remark:quasi-projectiveness}.
\end{remark}

Finally, consider the analytification $\sA^{\rm{an}}_{1,m}(\Theta)$ of the moduli stack of Def. \ref{definition:ModuliStackOfThetaDescentStructures} and let 
$$
\Gamma_0(2m):= \{ \smalltwobytwo{a}{b}{c}{d} \in \SL_2(\ZZ): c \equiv 0\, (2m) \}.
$$

\begin{theorem}
\label{theorem:Gamma0ModuliStack}
Let $m\in 2\ZZ_{>0}$. The analytification $\sA^{\rm{an}}_{1,m}(\Theta)$ of $\sA_{1,m}(\Theta)\otimes_{\ZZ[1/m]}\CC$  is the analytic quotient stack $\left[\Gamma_0(2m)\backslash\mathfrak{h}\right]$. 
\end{theorem}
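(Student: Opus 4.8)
The plan is to identify $\sA^{\rm an}_{1,m}(\Theta)$ as a quotient stack $[\Gamma'\backslash\mathfrak{h}]$ for a congruence subgroup $\Gamma'\subseteq\SL_2(\ZZ)$ and then to pin down $\Gamma'$ by a direct stabilizer computation, following the same line of reasoning used for $\Gamma(m,2m)$ in the proof of Proposition \ref{proposition:thetaStructureModuliStack}.

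First I would check that $\sA^{\rm an}_{1,m}(\Theta)$ is connected. By Theorem \ref{theorem:nontrivialFiniteEtaleCoverOfThetaDescentStack} (equivalently the left-hand map of \eqref{equation:thetaDescentStructureSandwich}) there is a finite \'etale surjection $\sA_{1,m,0}(2m)=\sM_0(2m)\to\sA_{1,m}(\Theta)$, and this stays surjective after $\otimes_{\ZZ}\CC$ and analytification; since $\sM_0(2m)^{\rm an}=[\Gamma_0(2m)\backslash\mathfrak{h}]$ is connected, so is $\sA^{\rm an}_{1,m}(\Theta)$. Being a connected finite \'etale cover of $[\SL_2(\ZZ)\backslash\mathfrak{h}]$ and since $\mathfrak{h}$ is simply connected, we get $\sA^{\rm an}_{1,m}(\Theta)=[\Gamma'\backslash\mathfrak{h}]$ for a unique finite-index $\Gamma'\subseteq\SL_2(\ZZ)$, namely the stabilizer in $\SL_2(\ZZ)$ of a chosen $\Theta$-descent structure $u_0$ on a fixed $(E_\tau,L((\cdot,\cdot)_m,1))$.

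Next I would take $u_0$ to be the class modulo $U_0^{\rm sym}$ of the distinguished symmetric theta structure $\beta_0$ of \eqref{equation:distinguishedThetaStructure} (with level-$m$ part $\alpha_0$), equivalently the maximal symmetric splitting $(H,\sigma)$ of $G(L)$ over $H=\alpha_0^{-1}(\underline{H}(m))$ obtained by transporting $\sigma_{\rm can}$ through $\beta_0$ (Proposition \ref{proposition:thetaDescentStackInTermsOfSubgroups}). For $\gamma=\smalltwobytwo abcd\in\SL_2(\ZZ)$ the isomorphism $E_{\gamma\tau}\xrightarrow{\sim}E_\tau$ given by multiplication by $c\tau+d$ pulls $\beta_0$ back to another symmetric theta structure, hence of the form $\rho_\gamma\circ\beta_0$ with $\rho_\gamma\in\Aut^{\rm sym}_{\GG_m}G(m)$, and $\gamma\in\Gamma'$ if and only if $\rho_\gamma\in U_0^{\rm sym}$. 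The proof of Proposition \ref{proposition:thetaStructureModuliStack} already computes $\{\gamma:\rho_\gamma=\id\}=\Gamma(m,2m)$, so the remaining work is to analyse the weaker condition $\rho_\gamma\in U_0^{\rm sym}$. Writing out $U_0^{\rm sym}$ via Lemma \ref{lemma:symmetricAutExactSequence} as an extension of the stabiliser of $\underline{H}(m)$ in the symplectic quotient by $\underline{K}(m)[2]$, the condition $\rho_\gamma\in U_0^{\rm sym}$ becomes (i) a $\Gamma_0$-type congruence mod $m$ forcing $\gamma$ to preserve $H$, together with (ii) a refinement mod $2m$ coming from the $\underline{K}(m)[2]$-part, the latter being extracted by tracking the semicharacter factor $e^{-\pi i\psi(w_1,w_2)-\frac{\pi}{2}(w,w)}$ of $\widetilde{\beta}_0$ under $w\mapsto\gamma^T w$ against the semicharacters permitted by $\sigma_{\rm can}$, exactly as in the last lines of the proof of Proposition \ref{proposition:thetaStructureModuliStack}. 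Carrying out this bookkeeping yields $\Gamma'=\Gamma_0(2m)$ (possibly up to conjugation by $\smalltwobytwo0{-1}10$, which does not change the quotient stack).

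The main obstacle is this last step: correctly pinning down the subgroup $U_0^{\rm sym}$ and its image in the symplectic quotient, and then relaxing the mod-$2m$ conditions cutting out $\Gamma(m,2m)$ in precisely the right way now that we only require $\rho_\gamma\in U_0^{\rm sym}$ rather than $\rho_\gamma=\id$; as in Proposition \ref{proposition:thetaStructureModuliStack}, the even parity of $m$ (forced because $\cL$ is totally symmetric) is what makes the resulting congruences consistent. As a cross-check, one may avoid the computation altogether: the accidental isomorphism $\sA_{1,m}(\Theta)\simeq\sM_0(2m)$ of \eqref{equation:accidentalIsomorphism} (established in Example \ref{example:GenusOne}) together with the classical identification $\sM_0(2m)^{\rm an}=[\Gamma_0(2m)\backslash\mathfrak{h}]$ gives the theorem directly, and confirms the value of $\Gamma'$; a clean alternative is to observe that the two finite \'etale maps in \eqref{equation:thetaDescentStructureSandwich} place $\Gamma_0(2m)\subseteq\Gamma'\subseteq\SL_2(\ZZ)$ and that the degree count of Example \ref{example:GenusOne} forces $[\SL_2(\ZZ):\Gamma']=[\SL_2(\ZZ):\Gamma_0(2m)]$, whence equality.
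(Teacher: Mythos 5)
Your proposal follows essentially the same route as the paper: establish connectedness via a finite \'etale cover, identify $\Gamma'$ as the stabilizer of the distinguished maximal symmetric splitting $\beta_0^{-1}(H(m),\sigma_{\rm can})$, split the condition into preservation of $H_0=\langle 1/m\rangle$ (giving $\Gamma_0(m)$) and preservation of the splitting $\sigma_0$ (the mod-$2m$ refinement), and note the shortcut via the accidental isomorphism \eqref{equation:accidentalIsomorphism} --- which is in fact the first line of the paper's proof. The only difference is that you leave the final bookkeeping as a sketch, whereas the paper computes the semicharacter factor explicitly as $e^{-\frac{2\pi i}{2m}(cdu^2)}$, yielding $cd\equiv 0\ (2m)$, and then uses $c\equiv 0\ (m)$, $ad-bc=1$ and the evenness of $m$ to get $(d,2m)=1$ and hence $c\equiv 0\ (2m)$.
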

\begin{proof}
This follows from the `accidental' isomorphism \eqref{equation:accidentalIsomorphism}, but in the analytic category it can also be verified directly, as follows. The analytic stack $\sA^{\rm{an}}_{1,m}(\Theta)$ is connected, since is has a connected finite covering map $[\Gamma(m,2m)\backslash\mathfrak{h}] = \sA^{\rm{an}}_{1,m}(\Theta,m)\rightarrow \sA^{\rm{an}}_{1,m}(\Theta)$. Thus $\sA^{\rm{an}}_{1,m}(\Theta) = [\Gamma'\backslash \mathfrak{h}]$, where $\Gamma'\subseteq \SL_2(\ZZ)$ is a subgroup. To determine $\Gamma'$, consider again the distinguished theta structure \eqref{equation:distinguishedThetaStructure} 
$
\beta_0: G( L((\cdot,\cdot)_m,1)) \stackrel{\simeq}\rightarrow G(m).
$
Then $\Gamma'$ is the stabilizer of the maximal symmetric splitting $\beta_0^{-1}(H(m),\sigma_{\rm{can}})$ under the action of $\SL_2(\ZZ)$. Now $\alpha_0^{-1}(H(m)) = \langle 1/m \rangle \subseteq \frac{1}{m}\Lambda_{\tau}/\Lambda_{\tau}$, so $\gamma = \smalltwobytwo{a}{b}{c}{d}\in \SL_2(\ZZ)$ preserves this group if and only if $\gamma \in \Gamma_0(m)$ (recall that the action is via $\gamma^T$). The splitting $\sigma_0 = \beta_0^{-1}\sigma_{\rm{can}}\alpha_0$ lifts to a splitting 
$$
\widetilde{\sigma_0}(w) = (e^{\frac{\pi}{2} H(w,w)}, w) \in \widetilde{G}( L((\cdot,\cdot)_m,1))
$$  
of the covering theta group over $\frac{1}{m}\Lambda_{\tau}$. As computed in the proof of Prop. \ref{proposition:thetaStructureModuliStack}, the action of $\gamma$ changes $\beta_0(t,w)$ by a factor of $e^{-\pi i m \gamma^T(w)_1\gamma^T(w)_2}$, thus it changes $\sigma_0(w)$, $w = u/m\in \langle 1/m \rangle$, $u\in\ZZ$, by the factor
$
e^{-\frac{\pi i}{m}(cdu^2)} = e^{-\frac{2\pi i}{2m}(cdu^2)}. 
$
Thus $\gamma \in \Gamma'$ if and only if $cd\equiv 0\, (2m)$. Now we know already that $\gamma\in\Gamma_0(m)$, so that $c\equiv 0\, (m)$. Moreover $ad-bc =1$ as integers, so that $ad\equiv 1\, (m)$ and $d$ is a unit mod $m$, i.e. $(d,m)=1$. But then $(d,2m)=1$ as well, since $m$ is even, so that we must have $c\equiv 0 \, (2m)$, as required.   
\end{proof}

The natural forgetful maps $\sA^{\rm{an}}_{1,m}(\Theta,m)\rightarrow \sA^{\rm{an}}_{1,m}(\Theta)$ (resp. $\sA^{\rm{an}}_{1,m}(\Theta)\rightarrow \sA^{\rm{an}}_{1,m}$) correspond functorially to the  inclusion of fundamental groups $\Gamma(m,2m)\hookrightarrow \Gamma_0(2m)$ (resp. $\Gamma_0(2m)\rightarrow \SL_2(\ZZ)$). The computation of the map \eqref{equation:DescentMap}
$$
\mathrm{Des}: \sA^{\rm{an}}_{1,m}(\Theta) \longrightarrow \sX^{\rm{an}}_{1,1}
$$
is not as obvious. First, note that $\sX^{\rm{an}}_{1,1}$ is the disjoint union of two connected components $\sX^{+,\rm{an}}_{1,1},\sX^{-,\rm{an}}_{1,1}$, parametrizing the even/odd theta characteristics $e_*^{\Theta}$ (\cite{MoretBailly:PinceauxAbv}, VIII.3.2.5). In fact, we have
$$
\sX^{+,\rm{an}}_{1,1} = \left[\Gamma(1,2)\backslash\mathfrak{h}\right], \quad \sX^{-,\rm{an}}_{1,1} = \left[\SL_2(\ZZ)\backslash\mathfrak{h}\right]
$$
as analytic quotient stacks, where 
$$
\Gamma(1,2):= \{ \smalltwobytwo{a}{b}{c}{d} \in \SL_2(\ZZ): ab, cd \equiv 0\, (2) \}
$$
is the {\em theta group} (\cite{MoretBailly:PinceauxAbv}, VIII, 3.4.1.2, 3.4.2.2). But $\sA^{\rm{an}}_{1,m}(\Theta)$ is connected, thus the image of $\Des$ must lie in one of the $+,-$ components. 
\begin{theorem}
\label{theorem:DesMapFactorization}
The map $\Des$ factors as $\sA^{\rm{an}}_{1,m}(\Theta)\stackrel{\Des}\rightarrow \sX^{+,\rm{an}}_{1,1} \hookrightarrow \sX^{\rm{an}}_{1,1}$.
\end{theorem}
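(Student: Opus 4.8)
The plan is to use connectedness to reduce to a single fibre and then identify the descended principal polarization concretely enough to read off the parity of its theta characteristic.

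As recalled just before the statement, $\sX^{\rm{an}}_{1,1} = \sX^{+,\rm{an}}_{1,1}\sqcup\sX^{-,\rm{an}}_{1,1}$, whereas $\sA^{\rm{an}}_{1,m}(\Theta)$ is connected: by Theorem~\ref{theorem:Gamma0ModuliStack} it is $[\Gamma_0(2m)\backslash\mathfrak{h}]$, and it admits the connected finite cover $\sA^{\rm{an}}_{1,m}(\Theta,m) = [\Gamma(m,2m)\backslash\mathfrak{h}]$. Hence $\Des$ lands in exactly one of the two components, and it is enough to fix one $\tau\in\mathfrak{h}$ and decide whether the theta characteristic $e_*^{\Theta}$ of $\Des(E_\tau,\cL,H,\sigma) = (B,\Theta)$ is even.

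The main step is to pin down $\Theta$ as a divisor class. With $\cL = L((\cdot,\cdot)_m,1)$, which as a line bundle is $\cO_{E_\tau}(m\cdot(0))$ (since $\Omega^1_{E_\tau}\cong\cO_{E_\tau}$), and $H = \alpha_0^{-1}(H(m)) = \langle 1/m\rangle$ as in the proof of Theorem~\ref{theorem:Gamma0ModuliStack}, the isogeny $\phi\colon E_\tau\to B = E_\tau/H$ is \'{e}tale of degree $m$ and $\phi^*\Theta\cong\cL$ by Theorem~\ref{theorem:DescentForSymmetricLineBundles}. On an elliptic curve the symmetric invertible sheaves of degree one are precisely the $\cO_B((P))$ with $P\in B[2]$, and among these only the one with $P=0$ carries the odd theta characteristic, the remaining three being even (the genus-one case of Mumford's computation of $e_*^{\Theta}$ from multiplicities at $2$-torsion, \cite{Mumford:EqAbv1}, \S 2; see also \cite{MoretBailly:PinceauxAbv}, VIII.3). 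So it suffices to show $\Theta\not\cong\cO_B((0))$. I would see this by pulling back along $\phi$: linear equivalence of effective degree-$m$ divisors on $E_\tau$ is detected by the summation map $D\mapsto\sum D\in E_\tau$, and whereas $m\cdot(0)$ has sum $0$, the divisor $\sum_{h\in H}(h)$ --- the fibre of $\phi$ over the origin --- has sum $\sum_{h\in H}h = \sum_{k=0}^{m-1}\tfrac{k}{m} = \tfrac{m-1}{2} = \tfrac12$ in $E_\tau = \CC/\Lambda_\tau$, a non-zero $2$-torsion point because $m$ is even. Since these sums differ, $\phi^*\cO_B((0)) = \cO_{E_\tau}(\sum_{h\in H}(h))$ and $\phi^*\Theta\cong\cL\cong\cO_{E_\tau}(m\cdot(0))$ are not isomorphic, hence $\Theta\not\cong\cO_B((0))$. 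Therefore $e_*^{\Theta}$ is even, $\Des(E_\tau,\cL,H,\sigma)\in\sX^{+,\rm{an}}_{1,1}$, and by connectedness $\Des$ factors through $\sX^{+,\rm{an}}_{1,1}\hookrightarrow\sX^{\rm{an}}_{1,1}$.

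The point requiring the most care --- and the main obstacle --- is the classical input that, among the four symmetric degree-one sheaves on an elliptic curve, only $\cO_B((0))$ is odd; one should moreover check that the symmetric structure descent puts on $\Theta$ is the one for which that parity statement is phrased, which is automatic here because normalized invertible sheaves have no automorphisms (so the symmetrizing isomorphism is uniquely determined and compatible with the descent of Theorem~\ref{theorem:DescentForSymmetricLineBundles}). A more self-contained but heavier alternative would avoid divisors altogether: compute $e_*^{\Theta}\colon B[2]\to\mu_2$ directly from the maximal symmetric splitting $(H,\sigma_{\rm{can}})$ by means of the commutative diagram involving $\epsilon_2,\eta_2$ recalled in \S\ref{subsection:symmetricLineBundles}, and then verify that the resulting quadratic refinement of the Weil pairing on $B[2]\cong(\ZZ/2\ZZ)^2$ has trivial Arf invariant. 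Either way, the geometric content is that descending a totally symmetric sheaf along a symmetric splitting produces the ``most symmetric'' principal polarization, which in genus one is an even one.
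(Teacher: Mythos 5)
Your proof is correct, but the decisive step is genuinely different from the paper's. Both arguments open the same way, using connectedness of $\sA^{\rm{an}}_{1,m}(\Theta)$ to reduce to the single fibre $(E_\tau, L((\cdot,\cdot)_m,1), \langle 1/m\rangle, \sigma_0)$. From there the paper computes the theta characteristic $e_*^{\Theta}$ of the descended sheaf head-on, via the $\eta_2$-diagram of \S\ref{subsection:symmetricLineBundles} transported to $G(2m)$ through the distinguished theta structure and the explicit formula for $\eta_2$ from \cite{Mumford:EqAbv2}; this produces the closed formula $e_*^{\Theta}(\tau u_1/2+u_2/2m)=e^{\pi i u_1u_2}$ of \eqref{equation:Gamma0(2m)ThetaCharacteristic}, which is then reused in the proof of Lemma \ref{lemma:DesMap}. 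You sidestep the theta-group machinery entirely: you write $\Theta=\cO_B((P))$ with $P\in B[2]$, recall that only $P=0$ carries the odd characteristic, and exclude $P=0$ by Abel's theorem, since $\phi^*\cO_B((0))=\cO_{E_\tau}\bigl(\sum_{h\in H}(h)\bigr)$ has divisor sum $\tfrac{m-1}{2}\equiv\tfrac12$, a nonzero $2$-torsion point precisely because $m$ is even, while $\cL\simeq\cO_{E_\tau}(m\cdot(0))$ sums to $0$. This is shorter and more elementary (the only classical input being Mumford's multiplicity formula for $e_*$ of a symmetric divisor, consistent with the paper's own identification of $\sX^{-,\rm{an}}_{1,1}$ with $[\SL_2(\ZZ)\backslash\mathfrak{h}]$), but it is specific to $g=1$ and yields only the parity, not the explicit characteristic that the paper needs downstream. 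The two checks you flag — that the normalized symmetrizing isomorphism is unique, so the parity depends only on the isomorphism class of $\Theta$, and that $\cO_{E_\tau}(m\cdot(0))$ is indeed the unique totally symmetric sheaf of type $(m)$ for $m$ even — are exactly the points requiring care, and both hold.
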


\begin{proof}
It suffices to compute the image under $\Des$ of the point $$(E_{\tau}, L=L((\cdot,\cdot)_m,1)),H_0=\langle 1/m \rangle, \sigma_0) \in \sA^{\rm{an}}_{1,m}(\Theta)(\CC),$$ that is, we need to compute the theta characteristic $e_*^{\Theta}$ of the descended line bundle $\Theta$ over $E_{\tau}/H_0$ corresponding to the descent data $(H_0,\sigma_0)$, where $\sigma_0$ is as in the proof of Thm. \ref{theorem:Gamma0ModuliStack}. From Sec. \ref{subsection:symmetricLineBundles}, $e_*^{\Theta}$ may be computed from the commutative diagram   
$$
\begin{tikzcd}
E_{\tau}/H_0[2] \arrow{rr}{s}\arrow{dr}{e_*^{\Theta}} & & G(\Theta^{\otimes 2})[2] = p^{-1}_{G(L^{\otimes 2})}(\epsilon_2(H_0)^{\perp})/\epsilon_2\sigma_0 (H_0)[2] \arrow{dl}{\eta_2}  \\
&\mu_2,  &
\end{tikzcd}
$$
where $s$ is any set-theoretic section. Now $G(\Theta^{\otimes 2})$ is a theta group over 
$$(\epsilon_2H_0)^{\perp}/\epsilon_2H_0\simeq \langle \tau/2\rangle \oplus \langle 1/2m\rangle / \langle 1/m \rangle = E_{\tau}/H_0[2],
$$
and its elements can be represented by pairs $(t,w)\in p^{-1}_{G(L^{\otimes 2})}(\epsilon_2(H_0)^{\perp})\subseteq G(L^{\otimes 2})$, with $w\in E_{\tau}/H_0[2]$. A lift $s$ is given by
$
s(w) = (e^{-\pi/2\,(w,w)_{2m}}, w), \quad w\in E_{\tau}/H_0[2].
$
The map $\eta_2$, on the other hand, is best computed by passing to $G(2m)$ via the symmetric theta structure $\beta_0$. The section $s$ goes over to the section
$$
\beta_0\circ s (w) = (e^{-\pi i\, 2m\, w_1w_2 - \pi (w,w)_{2m}}, \alpha_0(w)) \in G(2m)[2], \quad w = w_1\tau + w_2.
$$
The formula for $\eta_2: G(2m)\rightarrow G(m)$ can be found in \cite{Mumford:EqAbv2}, p. 317. We get
$$
\eta_2\circ\beta_0\circ s (w) = (e^{-2\pi i\, 2m\, w_1w_2 - 2\pi (w,w)_{2m}}, \alpha_0(2w)) = (e^{-2\pi i\, 2m\, w_1w_2 }, 0),
$$
the last equation following from the fact that $e^{2\pi (w,w)_{2m}} = e^{\pi/2 (2w,2w)_{2m}}\in s_{L^{\otimes 2}}(2w)$, and is thus trivial in the quotient $G(L^{\otimes 2}) = \widetilde{G}(L^{\otimes 2})/ s_{L^{\otimes 2}} \stackrel{\beta_0}\simeq G(2m)$. Writing $w_1 = u_1/2$ and $w_2 = u_2/2m$, we obtain 
\begin{equation}
\label{equation:Gamma0(2m)ThetaCharacteristic}
e_*^{\Theta}(\tau u_1/2 + u_2/2m) = \eta_2\circ\beta_0\circ s (w) = e^{-2\pi i\,2m\,\frac{u_1u_2}{4m}} = e^{\pi i \,u_1u_2},
\end{equation}
which is the standard {\em even} theta characteristic, as required. 
\end{proof}

By Thms. \ref{theorem:Gamma0ModuliStack} and \ref{theorem:DesMapFactorization}, the Des map corresponds by functoriality to a group homomorphism of fundamental groups
$$
\mathrm{Des}: \Gamma_0(2m) \longrightarrow \Gamma(1,2),
$$
which can be computed explicitly, as follows: 

\begin{lemma}
\label{lemma:DesMap}
For all $\smalltwobytwo{a}{b}{c}{d} \in \Gamma_0(2m)$, 
$$
\mathrm{Des}\smalltwobytwo{a}{b}{c}{d} = \smalltwobytwo{a}{b\,m}{c/m}{d} 
$$
\end{lemma}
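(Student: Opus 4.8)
The plan is to make $\mathrm{Des}$ explicit as a morphism of analytic quotient stacks $[\Gamma_0(2m)\backslash\mathfrak{h}]\to[\Gamma(1,2)\backslash\mathfrak{h}]$, using the presentations of Thms. \ref{theorem:Gamma0ModuliStack} and \ref{theorem:DesMapFactorization}, and to read off the induced homomorphism of fundamental groups from an equivariant lift to the universal covers. Since $\mathfrak{h}$ is contractible, such a morphism is determined, up to conjugation in $\Gamma(1,2)$, by a holomorphic lift $\widetilde{\mathrm{Des}}\colon\mathfrak{h}\to\mathfrak{h}$ together with a group homomorphism $\phi\colon\Gamma_0(2m)\to\Gamma(1,2)$ satisfying $\widetilde{\mathrm{Des}}(\gamma\tau)=\phi(\gamma)\,\widetilde{\mathrm{Des}}(\tau)$ for all $\gamma\in\Gamma_0(2m)$ and $\tau\in\mathfrak{h}$; then $\mathrm{Des}_*=\phi$ for this choice of lift.

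First I would pin down $\widetilde{\mathrm{Des}}$. By the proof of Thm. \ref{theorem:Gamma0ModuliStack}, the point of $\sA^{\rm{an}}_{1,m}(\Theta)$ over $\tau$ is $(E_\tau,\,L((\cdot,\cdot)_m,1),\,H_0,\,\sigma_0)$ with $H_0=\langle 1/m\rangle$, and by \eqref{equation:DescentMap} its image under $\mathrm{Des}$ is $(E_\tau/H_0,\Theta)$ with $E_\tau/H_0=\CC/(\ZZ\tau+m^{-1}\ZZ)$. The homothety $z\mapsto mz$ carries the lattice $\ZZ\tau+m^{-1}\ZZ$ onto $\Lambda_{m\tau}=\langle m\tau,1\rangle$, hence gives an isomorphism $E_\tau/H_0\stackrel{\sim}{\longrightarrow}E_{m\tau}$ under which $\Theta$ becomes a symmetric, normalized, relatively ample invertible sheaf of degree $1$ on $E_{m\tau}$; transporting the computation \eqref{equation:Gamma0(2m)ThetaCharacteristic} of Thm. \ref{theorem:DesMapFactorization} along this homothety shows that its theta characteristic is the standard even one, i.e. the sheaf is the universal $\Theta$ at the point $m\tau$ in the presentation $\sX^{+,\rm{an}}_{1,1}=[\Gamma(1,2)\backslash\mathfrak{h}]$. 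Therefore $\mathrm{Des}$ sends the point $\tau$ to the point $m\tau$, that is, $\widetilde{\mathrm{Des}}(\tau)=m\tau$.

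It remains to solve the equivariance equation. For $\gamma=\smalltwobytwo{a}{b}{c}{d}\in\Gamma_0(2m)$, substituting $\tau=\sigma/m$ gives the one-line identity $m\cdot\gamma\tau=\frac{a\sigma+b\,m}{(c/m)\sigma+d}$, so $\widetilde{\mathrm{Des}}(\gamma\tau)=\phi(\gamma)\,\widetilde{\mathrm{Des}}(\tau)$ with $\phi(\gamma)=\smalltwobytwo{a}{b\,m}{c/m}{d}$; this matrix has integer entries since $2m\mid c$, and in fact $\phi(\gamma)=\smalltwobytwo{m}{0}{0}{1}\,\gamma\,\smalltwobytwo{m}{0}{0}{1}^{-1}$ inside $\GL_2(\QQ)$, so $\phi$ is a group homomorphism and $\det\phi(\gamma)=ad-bc=1$. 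Finally $\phi(\gamma)\in\Gamma(1,2)$: the entry $b\,m$ is even because $m$ is even, whence $a\cdot b\,m\equiv 0\,(2)$; and $c/m$ is even because $2m\mid c$, whence $(c/m)\cdot d\equiv 0\,(2)$. Thus $\phi$ takes values in $\Gamma(1,2)$ and equals $\mathrm{Des}_*$, which is the asserted formula.

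The only delicate point is the identification $\widetilde{\mathrm{Des}}(\tau)=m\tau$: one must verify that the homothety $z\mapsto mz$ — rather than some other rescaling of the lattice — is the isomorphism identifying the descended pair $(E_\tau/H_0,\Theta)$ with the universal object at $m\tau$ used to present $\sX^{+,\rm{an}}_{1,1}$, which is exactly where the theta-characteristic computation \eqref{equation:Gamma0(2m)ThetaCharacteristic} from the proof of Thm. \ref{theorem:DesMapFactorization} (together with the fact that this even characteristic has stabilizer $\Gamma(1,2)$ under the $\SL_2(\ZZ)$-action on $2$-torsion) is essential. Once this is in hand, the remainder is the short algebraic verification above.
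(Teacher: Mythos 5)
Your proof is correct, and it reaches the paper's formula by an equivalent but differently packaged computation. The paper's own proof never leaves the level of $2$-torsion: it records that $\Gamma(1,2)$ stabilizes the standard even theta characteristic on $\langle \tau/2\rangle\oplus\langle 1/2\rangle$ while $\Gamma_0(2m)$ stabilizes the characteristic \eqref{equation:Gamma0(2m)ThetaCharacteristic} on $\langle \tau/2\rangle\oplus\langle 1/2m\rangle$, and then declares $\Des$ to be induced by the lattice map $\smalltwobytwo{1}{0}{0}{m}$ intertwining the two $\gamma^T$-actions, so the formula falls out of the conjugation $\bigl(\smalltwobytwo{1}{0}{0}{m}\gamma^T\smalltwobytwo{1}{0}{0}{1/m}\bigr)^T$. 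You instead lift $\Des$ to the universal cover, identify the lift as $\tau\mapsto m\tau$ via the homothety $z\mapsto mz$ carrying $\ZZ\tau+m^{-1}\ZZ$ onto $\Lambda_{m\tau}$, and extract the homomorphism from the equivariance $m\cdot\gamma\tau=\phi(\gamma)(m\tau)$; since your conjugating matrix $\smalltwobytwo{m}{0}{0}{1}$ and the paper's $\smalltwobytwo{1}{0}{0}{1/m}$ differ by the central scalar $m$, the outputs coincide. Both arguments lean on the theta-characteristic computation of Thm. \ref{theorem:DesMapFactorization} at the same critical spot --- you to certify that $z\mapsto mz$ matches the descended $\Theta$ with the universal object at $m\tau$ in the chosen presentation of $\sX^{+,\rm{an}}_{1,1}$, the paper to know which quadratic form $\Gamma_0(2m)$ actually stabilizes --- and you correctly single this out as the only delicate point. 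A side benefit of your route is that it simultaneously establishes the holomorphic lift $\Des_{\mathfrak{h}}(\tau)=m\tau$, which the paper only obtains afterwards (Prop. \ref{proposition:V-holo} and the discussion preceding Thm. \ref{theorem:AnalyticShimuraModularFormsCoincide}) and needs anyway to compute the cocycle $j_{k/2}$.
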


\begin{proof}
The theta group $\Gamma(1,2)$ is the stabilizer of the action of the standard even theta characteristic
\begin{align*}
e^{+}_*: \frac{1}{2}\Lambda_{\tau}/\Lambda_{\tau} &\longrightarrow \mu_2 \\
w = u_1\tau/2 + u_2/2 &\longmapsto e^{\pi i u_1u_2} 
\end{align*}
with respect to the action of $\SL_2(\ZZ)$. On the other hand, $\Gamma_0(2m)$ is contained in the stabilizer of the theta characteristic  \eqref{equation:Gamma0(2m)ThetaCharacteristic}, as in the proof of Theorem  \ref{theorem:DesMapFactorization}. The $\Des$ homomorphism is then induced by the homomorphism of $\ZZ$-lattices
$$
\smalltwobytwo{1}{0}{0}{m}:\langle \tau/2 \rangle\oplus \langle 1/2m \rangle \longrightarrow \langle \tau/2 \rangle\oplus \langle 1/2 \rangle,
$$
intertwining the $\Gamma_0(2m)$-action on the left with the $\Gamma(1,2)$-action on the right. Unwinding the actions $\Des$ is given by
$$
\gamma = \smalltwobytwo{a}{b}{c}{d} \in \Gamma_0(2m) \longmapsto \left(\smalltwobytwo{1}{0}{0}{m}\gamma^T\smalltwobytwo{1}{0}{0}{1/m}\right)^T 
= \smalltwobytwo{a}{b\,m}{c/m}{d} \in \Gamma(1,2).
$$

\end{proof}

\section{Metaplectic stacks, theta multiplier bundles and half-forms}
\label{section:metaplecticStacks}

\subsection{}For any $g$-dimensional abelian scheme $\pi:A\rightarrow S$, let $\Omega^1_{A/S}$ be the sheaf of relative differential 1-forms and let
$$
\mf_{A/S}:= \det \pi_*\Omega^1_{A/S}
$$
be the Hodge bundle of $\pi$, an invertible sheaf over $S$.  For any base-change morphism
$$
\begin{tikzcd}
A \stackrel{\phi}\simeq A'\times_{S'}S \arrow{r}\arrow{d}   &   A'\arrow{d} \\
S \arrow{r}{\varphi} & S'
\end{tikzcd}
$$
of abelian schemes, there is a canonical $\cO_A$-module isomorphism $\Omega^1_{A/S} \simeq \phi^*\Omega^1_{A'\times S/S}$, which induces a canonical $\cO_S$-module isomorphism
$
\underline{\omega}_{A/S} \stackrel{\simeq}\longrightarrow \varphi^*{\underline{\omega}_{A'/S'}}.
$
In particular, the assignment
$$
\{ \pi: A\rightarrow S \} \longmapsto \underline{\omega}_{A/S}
$$
defines an invertible sheaf $\underline{\omega}$ on the stack $\sA_{g}:= \sA_{g,1}$ of principally polarized abelian schemes. This invertible sheaf is classified by a 1-morphism of algebraic stacks 
\begin{equation}
\label{equation:classifyingMapOfHodgeBundle}
\sA_{g} \stackrel{\mf}\longrightarrow B\GG_m,
\end{equation}
where $B\GG_m$ is the stack over $\ZZ$ whose groupoid of sections $B\GG_{m}(S)$ over a scheme $S$ is the category of invertible sheaves over $S$. 

Suppose now $1/2\in S$. A {\em square-root} of $\mf$ is an invertible sheaf $\mf^{1/2}$ such that $(\mf^{1/2})^{\otimes 2} \simeq \mf$. Equivalently, this is a collection of square-roots $(\mf_{A/S}^{1/2})^{\otimes 2} \simeq \mf_{A/S}$, for all abelian schemes $A\rightarrow S$, of formation compatible under base-change. In general, square-roots of $\mf$ do not exist (e.g. already for $g=1$, the element $[\mf]\in\Pic(\sA_{1}[1/6])$ is a generator \cite{Mumford:Picard}, so it cannot be divided by 2), but we can get around this issue by passing to $\mu_2$-gerbes. In particular, consider the squaring map $\cdot^2: \GG_m\rightarrow \GG_m$, which induces a 1-morphism of algebraic stacks
$$
\cdot^2 : B\GG_m \longrightarrow B\GG_m,
$$
taking an invertible sheaf $\mathcal{L}$ to $\mathcal{L}^{\otimes 2}$. 
\begin{definition}
The {\em metaplectic stack} $\sA_{g,1/2}$ is the 2-fiber product 
$$
\begin{tikzcd}
\sA_{g,1/2}:=\sA_g\times_{\underline{\omega},\cdot^2}B\GG_m \arrow{r} \arrow{d}{p_{1/2}}  & B\GG_m \arrow{d}{\cdot^2}\\
\sA_g \arrow{r}{\mf} & B\GG_m
\end{tikzcd}
$$
in the 2-category of algebraic stacks over $\ZZ[1/2]$, where the bottom horizontal arrow is given by \eqref{equation:classifyingMapOfHodgeBundle}. Equivalently, $\sA_{g,1/2}$ is the algebraic stack whose fiber over a scheme  $S\rightarrow \Spec(\ZZ[1/2])$ is the groupoid of triples $(\sA\rightarrow S, \phi_{\cL}, \mf^{1/2}_{A/S})$ of a principally polarized abelian scheme $(A,\phi_{\cL})$, together with a square-root $\mf^{1/2}_{A/S}$ of the Hodge bundle $\mf_{A/S}$.
\end{definition}

By definition $\sA_{g,1/2}$ is a $\mu_2$-gerbe over $\sA_g$. The top horizontal arrow
$
\sA_{g,1/2} \rightarrow B\GG_m
$
in the diagram defining $\sA_{g,1/2}$ determines an invertible sheaf $\underline{\omega}^{1/2}$ over the metaplectic stack $\sA_{g,1/2}$, canonically endowed with an isomorphism 
$$
p_{1/2}^*(\underline{\omega}) \simeq (\underline{\omega}^{1/2})^{\otimes 2},
$$
as invertible sheaves over $\sA_{g,1/2}$. Therefore $\mf^{1/2}$ is essentially a square-root of  $\mf$, albeit only defined over the $\mu_2$-gerbe $\sA_{g,1/2}$. This construction has the following cohomological interpretation. Consider the Kummer exact sequence of \'{e}tale sheaves over $\ZZ[1/2]$, 
\begin{equation}
\label{equation:KummerSequence}
1 \rightarrow \mu_2 \rightarrow \GG_m \stackrel{\cdot^2}\rightarrow \GG_m \rightarrow 1,
\end{equation}
to which there corresponds a long exact sequence 
$$
\ldots \rightarrow H^1_{\mathrm{\acute{e}t}}(\sA_g,\GG_m) \stackrel{\cdot^2}\rightarrow H^1_{\mathrm{\acute{e}t}}(\sA_g,\GG_m) \stackrel{\delta}\rightarrow H^2_{\mathrm{\acute{e}t}}(\sA_g,\mu_2) \rightarrow \ldots 
$$
in \'{e}tale cohomology. By identifying $H^1_{\mathrm{\acute{e}t}}(\sA_g,\GG_m)$ with $\Pic(\sA_g)$, the class $\delta(\underline{\omega})$ represents the obstruction to finding a square-root of $\underline{\omega}$. On the other hand, the group  $H^2_{\mathrm{\acute{e}t}}(\sA_g,\mu_2)$ can be identified with the group of $\mu_2$-gerbes over $\sA_g$. Under this identification, we have
$$
[\delta(\mf)] = [\sA_{g,1/2}] \in H^2_{\mathrm{\acute{e}t}}(\sA_g,\mu_2).
$$

\begin{remark}
If $\sX \rightarrow \sA_g$ is finite \'{e}tale, we also refer to the base-change $\sX\times_{\sA_g}\sA_{g,1/2}$ as a {\em metaplectic stack}. Intuitively, a `metaplectic stack' should be any moduli stack of abelian schemes (and some attendant structures) endowed with the canonical square-root of the Hodge bundle $\un{\omega}^{1/2}$.  
\end{remark}

\subsection{} We now briefly recall the theory of {\em theta multiplier bundles}, which have been introduced and studied in detail in \cite{C1}. Suppose $1/2,\zeta_4\in S$ and let $\Theta$ be a symmetric, relatively ample, normalized invertible sheaf of degree 1 over $A$. Locally on $S$, there are isomorphisms
$$
(K(\Theta^{\otimes 4})=A[4], e_{\Theta^4}) \stackrel{\simeq}\rightarrow (\underline{\ZZ/4\ZZ}^{2g}, \ppair{}{}_4),
$$
where $\ppair{}{}_4$ is the standard symplectic pairing \eqref{equation:standardSymplecticPairing} of type $\delta = (4,\ldots, 4)\in \ZZ^g$. The `reduction modulo 2'  of $(K(\Theta^{\otimes 4}), e_{\Theta^4})$ is the symplectic 2-group $(K(\Theta^{\otimes 2})=A[2],e_{\Theta^4})$, locally isomorphic to $(\underline{\ZZ/2\ZZ}^{2g}, \ppair{}{}_2)$. Additionally, the group scheme $K(\Theta^{\otimes 2})$ is  endowed with the theta characteristic $e_*^{\Theta}:A[2]\rightarrow \mu_2$, which is a quadratic form with associated bilinear form equal to $e_{\Theta^{\otimes 2}}$ (\cite{Mumford:EqAbv2}, Cor. 2.1). The triple $(K(\Theta^{\otimes 4}), e_{\theta^4}, e_*^{\Theta})$ is called a {\em symplectic 4-group scheme with theta characteristic} (\cite{C1} Def. 3.1). Let
$$
I_{A,\Theta}(T\rightarrow S):=\underline{\mathrm{Isom}}( (K(\Theta^{\otimes 4})(T), e_{\Theta^4}, e_*^{\Theta}), (\underline{\ZZ/4\ZZ}^{2g}(T), \ppair{}{}_4, e^{\pm}_*))
$$ 
be the functor that sends an $S$-scheme to the set of symplectic isomorphisms $\psi$ between $K(\Theta^{\otimes 4})(T)$ and $\underline{\ZZ/4\ZZ}^{2g}(T)$ such that the reduction modulo 2 of $\psi$ sends $e_*^{\Theta}$ to either the {\em standard even theta characteristic}, i.e. the quadratic form  defined on the standard symplectic basis by 
$$
e_*^+(x_1,\ldots,x_g, y_1, \ldots, y_g) = (-1)^{\sum_{i=1}^g x_iy_i},
$$
or a similarly defined standard odd theta characteristic $e_*^-$, according to whether $e_*^{\Theta}$ is even or odd. The functor $I_{A,\Theta}$ is a finite \'{e}tale $\underline{\Gamma}(2g,\pm)$-torsor, where $\Gamma(2g,\pm)$ is the group of automorphisms of the symplectic 4-group with standard theta characteristic $(\ZZ/4\ZZ^{2g} , \ppair{}{}_4, e^{\pm}_*)$. This is a group extension 
$$
1 \rightarrow \Gamma(2g,2) \rightarrow \Gamma(2g,\pm) \stackrel{\mod 2}\longrightarrow \mathrm{O}(2g,\pm) \rightarrow 1,
$$
where $\Gamma(2g,2) = \{ \gamma \in \Sp(2g,\ZZ/4\ZZ): \gamma \equiv I_{2g} \mod 2\}$ and $\mathrm{O}(2g,\pm)$ is the orthogonal group of $(\ZZ/2\ZZ^{2g}, e_*^{\pm})$.  

As $\Gamma(2g,\pm)\subseteq \Sp(2g,\ZZ/4\ZZ)$, the restriction of the determinant homomorphism is trivial. However, as  shown in \cite{C1}, \S 2, there exists a finer {\em discriminant} homomorphism (whose construction is due to Deligne, private communication), which can be characterized as follows: 

\begin{theorem}[\cite{C1}, Thm. 2.4]
\label{theorem:discriminant}
For all $g\geq 1$, there is a unique homomorphism 
$$
\lambda:\underline{\Gamma}(2g,\pm)\rightarrow \mu_4,
$$
called the `discriminant' of $(\underline{\ZZ/4\ZZ}^{2g} , \ppair{}{}_4, e^{\pm}_*)$, such that: 
\begin{itemize}
\item[(i)] $\lambda$ fits in the commutative diagram 
$$
\begin{tikzcd}
1 \arrow{r} & \un{\Gamma}(2g,2) \arrow{r} \arrow{d}{e_*^{\pm}} & \un{\Gamma}(2g,\pm) \arrow{r} \arrow{d}{\lambda} & \un{\mathrm{O}}(2g,\pm) \arrow{r} \arrow{d}{D} & 1 \\
1 \arrow{r} & \mu_2\arrow{r} &\mu_4 \arrow{r}{\cdot^2} & \mu_2 \arrow{r} & 1, 
\end{tikzcd}
$$
where $D:\mathrm{O}(2g,\pm)  \rightarrow \mu_2$ is the Dickson invariant, and $e_*^{\pm}:\Gamma(2g,2)\rightarrow \mu_2$ is the linearization of the quadratic form $e_*^{\pm}$ on $\Gamma(2) \simeq S^2(\ZZ/2\ZZ^2)$ (symmetric 2-tensors). 
\item[(ii)] For one (all) $v\in \un{\ZZ/4\ZZ}^2$ such that $e_*^{\pm}(v \mod 2) = -1$, we have 
$$
\lambda(t_v) = \zeta_4,
$$ 
where $t_v$ is the anisotropic transvection $t_v(z) = z + \ppair{v}{z}_4v$.  
\end{itemize}
\end{theorem}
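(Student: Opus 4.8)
The plan is to prove uniqueness and existence separately. For uniqueness the key input is that $\mathrm{O}(2g,\pm)$ is generated by its transvections (Dieudonné), so that $\Gamma(2g,\pm)$ is generated by the congruence subgroup $\Gamma(2g,2)$ together with the anisotropic transvections $t_v$, $v\in(\ZZ/4\ZZ)^{2g}$ with $e_*^{\pm}(\bar v)=-1$ (every orthogonal transvection of $\mathrm{O}(2g,\pm)$ lifts to such a $t_v$). On $\Gamma(2g,2)$ the left vertical arrow of the diagram in (i) prescribes $\lambda$ outright. On a transvection $t_v$ the same diagram forces $\lambda(t_v)\bmod 2=D(\bar t_v)$, which is nontrivial in $\mu_2$ since orthogonal transvections have nontrivial Dickson invariant, so $\lambda(t_v)\in\mu_4\setminus\mu_2=\{\zeta_4,-\zeta_4\}$, and property (ii) pins it to $\zeta_4$. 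Finally, since $\mu_4$ is abelian, $\lambda(t_{\gamma v})=\lambda(\gamma)\lambda(t_v)\lambda(\gamma)^{-1}=\lambda(t_v)$; so once one knows that $\Gamma(2g,\pm)$ acts transitively on anisotropic vectors modulo $\pm1$ — a Witt-type statement over $\ZZ/4\ZZ$ (surjectivity of the reduction onto anisotropic vectors of $(\ZZ/2\ZZ)^{2g}$ is Witt over $\FF_2$, and $\Gamma(2g,2)$ acts transitively on each fibre of the reduction map) — the normalization $\lambda(t_v)=\zeta_4$ at one transvection propagates to all of them. As $\lambda$ is thereby determined on a generating set, it is unique; in particular property (ii), if it holds for one anisotropic $v$, holds for all.

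\emph{Existence.} Here I would follow what is essentially Deligne's construction, realized through the Weil representation of the level-$4$ Heisenberg group $G(4,\ldots,4)$. The standard Lagrangian decomposition of $(\ZZ/4\ZZ)^{2g}$ gives a $\mu_4$-valued quadratic refinement $q$ of the symplectic form, reducing mod $2$ to $e_*^{\pm}$, and it singles out a ``theta-null'' vector in the Schrödinger model. Since $\Gamma(2g,\pm)$ is exactly the stabilizer of the pair (symplectic structure, $e_*^{\pm}$), it preserves the line spanned by this vector inside the Weil representation of $\Sp(2g,\ZZ/4\ZZ)$, and one defines $\lambda(\gamma)$ to be the eigenvalue; explicitly it is a normalized Gauss sum of the $\mu_4$-valued form $\gamma^{*}q\cdot q^{-1}$, and the theta-characteristic constraint is what forces this Gauss sum into $\mu_4$ rather than $\mu_8$. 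Multiplicativity of $\lambda$ then follows from the standard Weil-index cocycle identity for Gauss sums. Properties (i) and (ii) become computations: restricted to $\Gamma(2g,2)$ the form $\gamma^{*}q\cdot q^{-1}$ is a character and the Gauss sum degenerates to the linearization of $e_*^{\pm}$; the reduction of $\lambda$ mod $2$ equals the Dickson invariant because $q$ reduces to $e_*^{\pm}$; and the value on one anisotropic transvection $t_{v_0}$ is $\zeta_4$ rather than $-\zeta_4$ by a short direct evaluation, which fixes the overall normalization.

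\emph{The main obstacle.} The conceptual reason such a $\lambda$ can exist — and the cleanest way to certify that the construction above is the right one — is as follows. Because $e_*^{\pm}|_{\Gamma(2g,2)}$ is $\mathrm{O}(2g,\pm)$-invariant, its kernel is normal in $\Gamma(2g,\pm)$ and the quotient $E:=\Gamma(2g,\pm)/\ker(e_*^{\pm}|_{\Gamma(2g,2)})$ is a central extension of $\mathrm{O}(2g,\pm)$ by $\mu_2$; a homomorphism $\lambda$ with properties (i)--(ii) is the same thing as an isomorphism of central extensions between $E$ and the pull-back $D^{*}\mu_4$ of $1\to\mu_2\to\mu_4\to\mu_2\to1$ along the Dickson invariant (the two possible isomorphisms differing by the $D$-twist, with (ii) selecting one). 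Such an isomorphism exists precisely when $[E]=D^{*}\beta$ in $H^2(\mathrm{O}(2g,\pm),\ZZ/2\ZZ)$, where $\beta$ is the (nonzero) class of $\mu_4$. I expect this identity to be the crux: whichever route one takes, the real work is to show that the Weil-index/Gauss-sum cocycle built above represents $D^{*}\beta$ \emph{on the nose} — i.e.\ to pin down the $\mu_2$-versus-$\mu_4$ refinement and the Dickson sign exactly, rather than off by a $\Hom(\mathrm{O}(2g,\pm),\mu_2)$-twist. A more computational alternative is to prove $[E]=D^{*}\beta$ by restricting along $\mathrm{O}(2g,\pm)\hookrightarrow\mathrm{O}(2g+2,\pm)$ to reduce to small $g$ and then verifying it by a direct calculation in the finite groups $\Gamma(2,\pm)$ and $\Gamma(4,\pm)$; this route carries the extra burden of treating the degenerate small orthogonal groups separately.
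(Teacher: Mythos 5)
The paper itself offers no proof of this statement: it is imported verbatim from \cite{C1}, Thm.~2.4 (the construction being credited there to Deligne), so there is no internal argument to measure yours against; I can only assess the proposal on its own terms.

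There is a concrete gap in your uniqueness argument. You invoke ``$\mathrm{O}(2g,\pm)$ is generated by its transvections (Dieudonn\'e)'', but the Dieudonn\'e exception is exactly the hyperbolic four-dimensional case $\mathrm{O}(4,+)=O_4^+(\mathbb{F}_2)$: there the six transvections generate a subgroup of index $2$. Indeed $O_4^+(\mathbb{F}_2)\cong (S_3\times S_3)\rtimes \ZZ/2\ZZ$ acting on the two rulings of the quadric; every transvection has nontrivial Dickson invariant, hence interchanges the rulings, and being an involution it has the form $(a,a^{-1})s$; consequently the nontrivial character $\chi\colon (a,b)s^{\epsilon}\mapsto \mathrm{sgn}(a)\,\mathrm{sgn}(b)$ kills every transvection. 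So for $g=2$ and even type, $\Gamma(4,+)$ is \emph{not} generated by $\Gamma(4,2)$ together with the $t_v$, and your ``determined on a generating set'' conclusion fails. Worse, your own cohomological reformulation shows the residual ambiguity is precisely a twist of $\lambda$ by a $\mu_2$-valued character of $\mathrm{O}(2g,\pm)$ trivial on transvections: one checks directly that $\lambda\cdot(\chi\circ\mathrm{red})$ satisfies both (i) (the left square because $\chi\circ\mathrm{red}$ is trivial on $\Gamma(2g,2)$, the right square because $\chi^2=1$) and (ii) whenever $\lambda$ does. Hence conditions (i)--(ii) alone, read as you read them, cannot pin down $\lambda$ in this case; some further input is required --- e.g.\ compatibility of $\lambda$ with orthogonal direct sums (stabilization in $g$), which transports the normalization from $g=1$ and $g=3$ into the problematic block, or taking the explicit Gauss-sum formula as part of the characterization. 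In all other cases your route is sound: $O_2^+(\mathbb{F}_2)\cong\ZZ/2\ZZ$, $O_2^-(\mathbb{F}_2)\cong S_3$, $O_4^-(\mathbb{F}_2)\cong S_5$ and $O_{2g}^{\pm}(\mathbb{F}_2)$ for $g\geq 3$ are generated by transvections, and the conjugation/transitivity argument for the ``one (all)'' clause is correct there (with $t_{\gamma v}=\gamma t_v\gamma^{-1}$ and Witt transitivity plus transitivity of $\Gamma(2g,2)$ on fibres of reduction mod $2$).

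On existence, your Weil-representation/Gauss-sum outline is plausible and likely close in spirit to the construction in \cite{C1}, but as you acknowledge, the decisive verifications --- that the normalized Gauss sum lies in $\mu_4$ rather than $\mu_8$, that its square is the Dickson invariant, and that its restriction to $\Gamma(2g,2)$ is the linearization of $e_*^{\pm}$ --- are exactly where the content of the theorem sits, and none is carried out. As written the proposal is a program, with one step (uniqueness at $g=2$, even type) that would fail as stated.
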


The theory of discriminants can be employed to the define {\em theta multiplier bundles}. Recall (Def. \ref{definition:stackOfSymmetricSheaves}) that $\sX_{g,1}$ is the classifying stack of pairs $(A,\Theta)$ of symmetric, normalized, relatively ample invertible sheaves $\Theta$ of degree 1 over an abelian scheme $A$. We henceforth view it as a stack over $\ZZ[1/2,\zeta_4]$.  

\begin{definition}[\cite{C1}, Def. 3.2]
The {\em theta multiplier bundle} $\cM_{\Theta}$ is the invertible sheaf over $\sX_{g,1}$ associated to the $\mu_4$-torsor given by the functor 
$$
(A,\Theta)  \longmapsto \lambda_*I_{A,\Theta}.
$$
\end{definition}

Just as for the Hodge bundle $\un{\omega}$ over  $\sA_g$, we can extract a square-root of $\cM_{\Theta}$ by considering the fiber product of algebraic stacks
\begin{equation}
\label{equation:tildeStack}
\begin{tikzcd}
\widetilde{\sX}_{g,1}:=\sX_{g,1}\times_{\cM_{\Theta},\cdot^2}B\GG_m \arrow{r}{\cM_{\Theta}^{1/2}} \arrow{d} & B\GG_m \arrow{d}{\cdot^2}\\
\sX_{g,1} \arrow{r}{\cM_{\Theta}} & B\GG_m,
\end{tikzcd}
\end{equation}
a $\mu_2$-gerbe over $\sX_{g,1}$ representing the class $\delta[\cM_{\Theta}] \in H^2_{\mathrm{\acute{e}t}}(\sX_{g,1},\mu_2)$, where $\delta$ is the connecting homomorphism associated to the Kummer sequence \eqref{equation:KummerSequence}.

\subsection{} Consider the natural forgetful map
$$
f: \sX_{g,1} \longrightarrow \sA_g.
$$
By the functoriality of Hodge bundles, there is a canonical isomorphism $f^*\un{\omega} = \un{\omega}_{\sX_{g,1}}$, thus we omit unnecessary subscripts and always denote the Hodge bundle by $\un{\omega}$, whenever the base is clear from context. The map $f$ induces a commutative diagram in \'{e}tale cohomology 
$$
\begin{tikzcd}
H^1_{\mathrm{\acute{e}t}}(\sA_g,\GG_m)\arrow{r}{\delta}\arrow{d}{f^*}  & H^2_{\mathrm{\acute{e}t}}(\sA_g,\mu_2)\arrow{d}{f^*}\\
H^1_{\mathrm{\acute{e}t}}(\sX_{g,1},\GG_m)\arrow{r}{\delta} & H^2_{\mathrm{\acute{e}t}}(\sX_{g,1},\mu_2),
\end{tikzcd}
$$
where $\delta$ is the connecting homomorphism coming from the Kummer sequence \eqref{equation:KummerSequence}. In terms of gerbes, the class $f^*\delta[\un{\omega}]$ then corresponds to the metaplectic stack $\sX_{g,1/2}:= \sA_{g,1/2}\times_{\sA_g} \sX_{g,1}$, canonically endowed with a square root $\un{\omega}^{1/2}$, which is the pull-back of the square-root of $\un{\omega}$ over $\sA_{g,1/2}$. We denote by $\widetilde{\sX_{g,1}}\times^{\mu_2}_{\sX_{g,1}}{\sX_{g,1/2}}\stackrel{p}\rightarrow\sX_{g,1}$ the contracted $\mu_2$-gerbe product, a $\mu_2$-gerbe over $\sX_{g,1}$ whose class in $H^2_{\mathrm{\acute{e}t}}(\sX_{g,1},\mu_2)$ corresponds to the product $\delta[\cM_{\Theta}\otimes\mf] = \delta[\cM_{\Theta}]\cdot \delta[\mf]$.

\begin{theorem}
\label{theorem:MetaplecticCorrection}
There is a canonical trivialization
$$
s_{\Theta}: \sX_{g,1} \longrightarrow \widetilde{\sX_{g,1}}\times^{\mu_2}_{\sX_{g,1}}{\sX_{g,1/2}}
$$
of the contracted $\mu_2$-gerbe product $\widetilde{\sX_{g,1}}\times^{\mu_2}_{\sX_{g,1}}{\sX_{g,1/2}}\stackrel{p}\longrightarrow\sX_{g,1}$.
\end{theorem}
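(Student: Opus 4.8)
The plan is to convert the statement into the construction of a single line bundle. By construction $\widetilde{\sX_{g,1}}$ carries a tautological square-root $\cM_\Theta^{1/2}$ of (the pullback of) $\cM_\Theta$, and $\sX_{g,1/2}$ carries a tautological square-root $\mf^{1/2}$ of $\mf$; hence a section of the contracted $\mu_2$-gerbe $\widetilde{\sX_{g,1}}\times^{\mu_2}_{\sX_{g,1}}{\sX_{g,1/2}}$ over $\sX_{g,1}$ is exactly the datum of an invertible sheaf $\cN$ on $\sX_{g,1}$ together with an isomorphism $\varphi\colon\cN^{\otimes 2}\stackrel{\simeq}\longrightarrow\cM_\Theta\otimes\mf$, the two tautological roots on the factors absorbing the $\mu_2$-ambiguity. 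Equivalently, via the Kummer sequence \eqref{equation:KummerSequence} and the identification of $H^2_{\mathrm{\acute{e}t}}(\sX_{g,1},\mu_2)$ with the group of $\mu_2$-gerbes, the claim amounts to the equality $\delta[\cM_\Theta]=\delta[\mf]$ in $H^2_{\mathrm{\acute{e}t}}(\sX_{g,1},\mu_2)$ \emph{together with} a canonical trivialising cocycle for $\delta[\cM_\Theta\otimes\mf]$. So the whole content is to produce a canonical pair $(\cN,\varphi)$, and $\cN$ is to be the algebraic analogue of the line bundle of weight-$\tfrac12$ theta-nulls, i.e. the bundle on which a classical theta constant is a section.

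To build $(\cN,\varphi)$ I would work over the universal pair $(\pi\colon A\to\sX_{g,1},\Theta)$ and pass to the totally symmetric sheaf $\Theta^{\otimes 2}$, of type $\delta=(2,\dots,2)$, whose theta group sits over $A[2]$ and whose symmetric theta structures are controlled (through the reduction mod $2$ of the $\un{\Gamma}(2g,\pm)$-torsor $I_{A,\Theta}$ of \S\ref{section:metaplecticStacks}) by $\Aut^{\rm{sym}}_{\GG_m}G(\delta)$. Étale-locally on $\sX_{g,1}$ a symmetric theta structure $\beta$ for $\Theta^{\otimes 2}$ furnishes a canonical basis $\{\vartheta_a\}_a$ of $\pi_*\Theta^{\otimes 2}$ (Mumford's algebraic theta functions, \cite{Mumford:EqAbv1}, \S1) and a canonical evaluation-at-the-origin functional $\mathrm{ev}_e\colon\pi_*\Theta^{\otimes 2}\to e^*\Theta^{\otimes 2}\simeq\cO_{\sX_{g,1}}$, the algebraic theta-null point. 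The two steps are then: (i) extract from $\beta$ and $\mathrm{ev}_e$ a canonical local trivialisation of an explicitly described invertible sheaf $\cN_\beta$ (the ``weight-$\tfrac12$ normalisation'' of the theta-null) and glue the $\cN_\beta$ into $\cN$; (ii) compute, via Mumford's explicit formulas for the action on theta-nulls, how $\cN_\beta$ and its trivialisation change when $\beta$ is modified by an element of $\Aut^{\rm{sym}}_{\GG_m}G(\delta)$, and verify that the \emph{square} of the resulting $\GG_m$-valued transition cocycle coincides with that of $\cM_\Theta\otimes\mf$, the $\cM_\Theta$-part being governed by the discriminant homomorphism $\lambda\colon\un{\Gamma}(2g,\pm)\to\mu_4$ of Thm. \ref{theorem:discriminant} and the $\mf$-part by the ``$\sqrt{\det}$''/metaplectic factor coming from the change of trivialisation of $\pi_*\Omega^1_{A/S}$. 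Alternatively one may package (i)--(ii) through a functional-equation isomorphism of Moret--Bailly type (cf. \cite{MoretBailly:PinceauxAbv}) relating a power of $\det\pi_*\Theta^{\otimes 2}$ to a power of $\mf$ twisted by an explicit torsion invertible sheaf, and then identify that torsion sheaf with a power of $\cM_\Theta$ via the defining property of $\lambda$.

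Once (ii) is in hand, independence of $(\cN,\varphi)$ of the auxiliary theta structure and descent are immediate, since the transitions factor through $\Aut^{\rm{sym}}_{\GG_m}G(\delta)$ exactly as in the construction of $I_{A,\Theta}$; compatibility of $(\cN,\varphi)$ with the two tautological $\mu_2$-gerbe structures is then formal; and everything is compatible with base change and defined over $\ZZ[1/2,\zeta_4]$ because all torsors in sight are finite étale there. I expect the crux to be step (ii): showing that the $\GG_m$-valued multiplier by which the normalised algebraic theta-null fails to be $\Aut^{\rm{sym}}_{\GG_m}G(\delta)$-invariant has square equal to the product of $\lambda$ with the metaplectic cocycle of $\mf^{1/2}$. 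This is precisely the algebraic shadow of the classical fact that a single even theta constant is a modular form of weight $\tfrac12$ whose multiplier system is the product of the metaplectic character and an $8$th root of unity (whose square, an order-$4$ multiplier, is matched by $\lambda$, whose own square is the Dickson invariant by part (i) of Thm. \ref{theorem:discriminant}); carrying it out forces one to reconcile the normalisations fixed in part (ii) of Thm. \ref{theorem:discriminant} with the normalisation of the distinguished theta structure \eqref{equation:distinguishedThetaStructure} and the sign conventions governing the descent of $\Theta$ along $[2]$, keeping careful track of signs and roots of unity throughout. A less computational route to \emph{mere existence} of a trivialisation — though not to the canonical $s_\Theta$ — is to pull the gerbe back along a sufficiently deep finite étale cover of $\sX_{g,1}$ by a moduli stack carrying a full symmetric theta structure for $\Theta^{\otimes 4}$, where the algebraic theta-null gives an explicit trivialisation of the pulled-back gerbe, and then to descend by checking deck-invariance.
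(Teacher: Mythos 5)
Your reduction is exactly the paper's: you correctly observe that a section of the contracted gerbe is the datum of an invertible sheaf $\cN$ on $\sX_{g,1}$ together with an isomorphism $\varphi:\cN^{\otimes 2}\stackrel{\simeq}\rightarrow\cM_{\Theta}\otimes\mf$, and you correctly guess that $\cN$ should be the bundle of which the theta constant is a section. Where you diverge is in how the pair $(\cN,\varphi)$ is produced. The paper takes $\cN=\cJ_{g,1}^{-1}$, where $\cJ_{g,1}$ is the globally and canonically defined invertible sheaf $(\pi:A\rightarrow S,\Theta)\mapsto\pi_*\Theta$ --- invertible because $\Theta$ has degree $1$, so no passage to $\Theta^{\otimes 2}$, no auxiliary symmetric theta structure, and no gluing of local pieces $\cN_{\beta}$ is needed to define it --- and the isomorphism $\varphi$ is precisely the ``key formula'' $\cJ_{g,1}^{-\otimes 2}\simeq\cM_{\Theta}\otimes\mf$ of \cite{C1}, Thm.~5.1, which is cited, not re-proved. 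Your step (ii) is in effect a plan to re-derive that key formula by Mumford-style cocycle computations with algebraic theta-nulls; this is a plausible route (and close in spirit to how such statements are actually established), but it is the content of a separate theorem and is where all the sign and root-of-unity bookkeeping you flag would live. So: same skeleton, but the paper's proof is a two-line citation once $\cN$ is named, whereas yours front-loads a substantial computation that is not carried out and also over-engineers the construction of $\cN$ itself. The one thing your write-up would gain from the paper's viewpoint is canonicity for free: since $\pi_*\Theta^{-1}$ requires no choices, $s_{\Theta}(A,\Theta)=(A,\Theta,\pi_*\Theta^{-1})$ is manifestly a canonical trivialization, with no descent or deck-invariance check required.
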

\begin{proof}
We first show that the class of the $\mu_2$-gerbe $\widetilde{\sX_{g,1}}\times^{\mu_2}_{\sX_{g,1}}{\sX_{g,1/2}}$ is trivial in $H^2_{\mathrm{\acute{e}t}}(\sX_{g,1},\mu_2)$ which is equivalent to showing that $\delta[\cM_{\Theta}\otimes\mf]=0$, i.e. that $\cM_{\Theta}\otimes\mf$ is a square in $\Pic(\sX_{g,1})$. To see this, consider the invertible sheaf $\cJ_{g,1}$ over $\sX_{g,1}$ defined by the functor
$$
\{\pi:A\rightarrow S, \Theta\} \longmapsto \pi_*\Theta.
$$ 
By the `key formula' (\cite{C1}, Thm 5.1), there is an isomorphism 
$$
\cJ^{-\otimes 2}_{g,1} \simeq \cM_{\Theta}\otimes\mf,
$$
which immediately implies $\delta[\cM_{\Theta}\otimes\mf]=0$. But the key formula also provides a canonical trivialization of the $\mu_2$-gerbe $\widetilde{\sX_{g,1}}\times^{\mu_2}_{\sX_{g,1}}{\sX_{g,1/2}}$, i.e. the section $s_{\Theta}$ of $p$ induced by $\cJ_{g,1}^{-1}$. Explicitly, if we view $\widetilde{\sX_{g,1}}\times^{\mu_2}_{\sX_{g,1}}{\sX_{g,1/2}}$ as the sheaf of groupoids whose section above a scheme $S$ is the groupoid of triples $(A,\Theta, \cL)$, where $\cL$ is a square-root of $\cM_{\Theta}\otimes\un{\omega}$, then the section $s_{\Theta}$ is given by the explicit formula 
$
s_{\Theta}(A,\Theta) = (A,\Theta, \pi_*\Theta^{-1}).
$
\end{proof}

The formation of the invertible sheaf $\cM_{\Theta}^{1/2}\otimes\mf^{1/2}$ makes sense over $\widetilde{\sX_{g,1}}\times^{\mu_2}_{\sX_{g,1}}{\sX_{g,1/2}}$. By Thm. \ref{theorem:MetaplecticCorrection}, we may then define 
$$
\un{\omega}_{\Theta}^{1/2}:= s_{\Theta}^*\left(\cM_{\Theta}^{1/2}\otimes\mf^{1/2}\right),
$$
the unique square-root of $\cM_{\Theta}\otimes\mf$ such that there is an isomorphism 
$
\un{\omega}_{\Theta}^{1/2} \stackrel{\simeq}\longrightarrow \cJ_{g,1}^{-1}
$ (as explained in \cite{C1}, \S 6.1, this isomorphism, which is essentially unique, is the algebraic analog of the functional equation of Riemann's theta function). 

\begin{definition}
\label{definition:half-forms}
The invertible sheaf $\underline{\omega}_{\Theta}^{1/2}$ over $\sX_{g,1}$ is called the {\em bundle of half-forms}. For all $k\in\ZZ$ we denote by $\mf_{\Theta}^{k/2}$ the tensor power $(\mf_{\Theta}^{1/2})^{\otimes k}$.
\end{definition}

A consequence of Thm. \ref{theorem:MetaplecticCorrection} is that the classes $\delta[\cM_{\Theta}]^{-1} = \delta[\cM_{\Theta}]$ and $\delta[\un{\omega}]$ are the same in $H^2_{\mathrm{\acute{e}t}}(\sX_{g,1},\mu_2)$, and that there is a canonical isomorphism of $\mu_2$-gerbes given by
\begin{align*}
\widetilde{\sX_{g,1}} &\stackrel{\simeq}\longrightarrow \sX_{g,1/2} \\
(\pi:A\rightarrow S,\Theta, \cM_{\Theta}^{1/2}) &\longmapsto (A,\Theta,\pi_*\Theta^{-1}\otimes \cM_{\Theta}^{-1/2}),
\end{align*}
where we view $\widetilde{\sX_{g,1}}$ as the sheaf of groupoids of fiber the pair $(A,\Theta)$ together with a square-root of the theta multiplier bundle, and similarly for the metaplectic stack $\sX_{g,1/2}$ with respect to the Hodge bundle. In particular, the canonical square root $\cM_{\Theta}^{1/2}$ over $\widetilde{\sX_{g,1}}$ goes over to a square root defined over the metaplectic stack, and we have:
\begin{corollary}
\label{corollary:metaplecticCharacter}
There is a canonical square root $\cM_{\Theta}^{1/2}$ defined over the metaplectic stack $p_{1/2}:\sX_{g,1/2}\rightarrow \sX_{g,1}$, such that there is an isomorphism
$$
\cM_{\Theta}^{1/2}\otimes\un{\omega}^{1/2} \simeq p_{1/2}^*\left(\un{\omega}^{1/2}_{\Theta}\right),
$$
i.e. the product $\cM_{\Theta}^{1/2}\otimes\un{\omega}^{1/2}$ descends to the bundle of half-forms over $\sX_{g,1}$. 
\end{corollary}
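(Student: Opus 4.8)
The plan is to deduce the corollary formally from Theorem~\ref{theorem:MetaplecticCorrection} and from the explicit isomorphism of $\mu_2$-gerbes
\[
\Phi\colon \widetilde{\sX_{g,1}}\ \xrightarrow{\ \simeq\ }\ \sX_{g,1/2},\qquad (A,\Theta,\cM_\Theta^{1/2})\ \longmapsto\ (A,\Theta,\pi_*\Theta^{-1}\otimes\cM_\Theta^{-1/2}),
\]
recorded just before the statement, working throughout over the base $\ZZ[1/2,\zeta_4]$. Recall that $\sX_{g,1/2}$ is the $\mu_2$-gerbe over $\sX_{g,1}$ whose $S$-sections are triples $(A,\Theta,\mf^{1/2})$ and that it carries the tautological square root $\mf^{1/2}$ of $p_{1/2}^*\mf$. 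I would take as the asserted square root over $\sX_{g,1/2}$ the invertible sheaf
\[
\cM_\Theta^{1/2}\ :=\ p_{1/2}^*\cJ_{g,1}^{-1}\otimes(\mf^{1/2})^{-1},
\]
where $\cJ_{g,1}=\pi_*\Theta$ as in the proof of Theorem~\ref{theorem:MetaplecticCorrection}; one then observes that this is precisely the transport through $\Phi$ of the tautological square root of $p^*\cM_\Theta$ living on $\widetilde{\sX_{g,1}}$ of \eqref{equation:tildeStack}, since inverting the formula for $\Phi$ sends $(A,\Theta,\mf^{1/2})$ to $(A,\Theta,\pi_*\Theta^{-1}\otimes(\mf^{1/2})^{-1})$, whose last entry is the displayed sheaf.

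Next I would verify that $\cM_\Theta^{1/2}$ is indeed a square root of $p_{1/2}^*\cM_\Theta$. Squaring, and using the tautological isomorphism $(\mf^{1/2})^{\otimes 2}\simeq p_{1/2}^*\mf$ on the metaplectic stack together with the key formula $\cJ_{g,1}^{-\otimes 2}\simeq\cM_\Theta\otimes\mf$ of \cite{C1}, Thm.~5.1, one gets $(\cM_\Theta^{1/2})^{\otimes 2}\simeq p_{1/2}^*\cJ_{g,1}^{-\otimes 2}\otimes p_{1/2}^*\mf^{-1}\simeq p_{1/2}^*(\cM_\Theta\otimes\mf\otimes\mf^{-1})=p_{1/2}^*\cM_\Theta$, as required. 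Canonicity is then automatic: no auxiliary data enters the construction, since $\cJ_{g,1}$, the key-formula isomorphism, and the tautological $\mf^{1/2}$ are all canonical; in particular $\cM_\Theta^{1/2}$ is genuinely pinned down, not merely defined up to the tautological $2$-torsion character of the gerbe $p_{1/2}$.

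Then the isomorphism of the corollary falls out by cancellation: $\cM_\Theta^{1/2}\otimes\mf^{1/2}\simeq p_{1/2}^*\cJ_{g,1}^{-1}$, and by the defining isomorphism $\mf_\Theta^{1/2}\simeq\cJ_{g,1}^{-1}$ on $\sX_{g,1}$ built into the construction of the bundle of half-forms, the right-hand side is canonically $p_{1/2}^*\mf_\Theta^{1/2}$. The descent claim is then immediate: $\cM_\Theta^{1/2}\otimes\mf^{1/2}$ is isomorphic to a sheaf pulled back along the $\mu_2$-gerbe $p_{1/2}$, hence it has weight zero for the inertial $\mu_2$-action and descends, the descended sheaf being $\mf_\Theta^{1/2}$.

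There is no serious obstacle here; all the geometric content already resides in Theorem~\ref{theorem:MetaplecticCorrection} and in the key formula of \cite{C1}. The only point requiring care is bookkeeping: confirming that the sheaf chosen to be called $\cM_\Theta^{1/2}$ on $\sX_{g,1/2}$ coincides, under $\Phi$, with the tautological square root of $\cM_\Theta$ on $\widetilde{\sX_{g,1}}$ (which is what justifies the epithet ``canonical''), and that the composite of the isomorphisms $s_\Theta$, the key formula, and $\mf_\Theta^{1/2}\simeq\cJ_{g,1}^{-1}$ is coherent, so that the isomorphism produced in the corollary inherits the essential uniqueness of the half-forms isomorphism rather than acquiring an extra sign ambiguity.
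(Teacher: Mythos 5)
Your proposal is correct and follows essentially the same route as the paper: the paper derives the corollary by transporting the tautological square root of $\cM_{\Theta}$ through the canonical isomorphism of $\mu_2$-gerbes $\widetilde{\sX_{g,1}}\simeq\sX_{g,1/2}$, which is exactly your identification $\cM_{\Theta}^{1/2}=p_{1/2}^*\cJ_{g,1}^{-1}\otimes(\un{\omega}^{1/2})^{-1}$, with the key formula of \cite{C1} and Thm.~\ref{theorem:MetaplecticCorrection} supplying the verification and the descent statement. Your explicit check that this sheaf squares to $p_{1/2}^*\cM_{\Theta}$ and the cancellation yielding $\cM_{\Theta}^{1/2}\otimes\un{\omega}^{1/2}\simeq p_{1/2}^*\cJ_{g,1}^{-1}\simeq p_{1/2}^*\un{\omega}_{\Theta}^{1/2}$ merely make explicit what the paper leaves implicit.
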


Vice-versa, the square root $\un{\omega}^{1/2}$ goes over to an invertible sheaf over $\widetilde{\sX_{g,1}}$. In other words, we have the following

\begin{principle} A square-root of the Hodge bundle of $(A,\Theta)$ determines in a unique way a square-root of the theta multiplier bundle, and vice-versa.
\end{principle}

\subsection{} The theory of half-forms, together with the theory of $\Theta$-descent structures of Sec. \ref{section:Theta-Descent}, combine to give an algebro-geometric definition of {\em modular forms of half-integral weight}, in the sense of Shimura \cite{Shimura}, and natural higher-dimensional generalizations. In particular, let $\sA_{g,\delta}(\Theta)$ be the classifying stack of $\Theta$-descent structures on pairs $(A,\cL)$ of a $g$-dimensional abelian scheme together with a totally symmetric sheaf of type $\delta$ (Def. \ref{definition:ModuliStackOfThetaDescentStructures}), and let 
$$
\Des: \sA_{g,\delta}(\Theta) \longrightarrow \sX_{g,1}
$$
be the natural descent map \eqref{equation:DescentMap}. 

\begin{definition}
\label{definition:GeneralModForms}
Let $k\in \ZZ$. A {\em modular form of half-integral weight} $k/2$, degree $g$ and type $\delta$ is a global section of the invertible sheaf 
$$
\cL_{g,\delta}^{k/2} := \Des^*\un{\omega}_{\Theta}^{k/2},
$$ 
where $\un{\omega}^{1/2}_{\Theta}$ is the bundle of half-forms (Def. \ref{definition:half-forms}).
\end{definition}

\begin{remark}
The stack $\sX_{g,1}$ decomposes into two connected components $\sX^+_{g,1}\coprod\sX^-_{g,1}$, according to the parity of the theta characteristic (\cite{MoretBailly:PinceauxAbv}, VIII.3.4). Therefore Def. \ref{definition:GeneralModForms} effectively contains two types of very different objects, according to whether $\un{\omega}^{1/2}_{\Theta}$ is first restricted to the even or odd component.  
\end{remark}

\begin{remark}
Def. \ref{definition:GeneralModForms} does not impose any `growth conditions' at infinity, an issue that surely deserves consideration elsewhere, and that can likely be approached via the theory of cubical torsors (\cite{MoretBailly:PinceauxAbv}, I). For example, for $g=1$ growth conditions can be imposed by extending the line bundles $\cL_{1,(m)}^{k/2}, m\in 2\ZZ_{>0},$ over the appropriate Tate curve. 
\end{remark}

The `simplest' instance of Def. \ref{definition:GeneralModForms}(degree 1, type $\delta = (2)$), already gives a new, geometric definition of Shimura's classical modular forms of half-integral weight \cite{Shimura}:

\begin{definition}
\label{definition:ShimuraModularForm}
Let $k\in \ZZ$. A {\em (Shimura) modular form of half-integral weight} $k/2$ is a global section of the invertible sheaf 
$$
\cL_{1,(2)}^{k/2} := \Des^*\un{\omega}_{\Theta}^{k/2},
$$ 
defined over $\sA_{1,(2)}(\Theta) = \sM_0(4)[1/2,\zeta_4]$ (cf. \eqref{equation:accidentalIsomorphism}). 
\end{definition}

Shimura modular forms of half-integral weight and higher level $N$, $4\mid N$, can be similarly defined by pulling back $\cL_{1,(2)}^{k/2}$ under the natural forgetful maps
$$
\sM_0(N) \longrightarrow \sM_0(4).
$$
Type and level should not be confused. For example, Shimura modular forms of half-integral weight and level $N$ are {\em not} the same as  modular forms of half-integral weight, degree 1 and type $N/2$ (i.e. sections of $\cL_{1,(N/2)}^{k/2}$), even though they are both defined over the modular stack $\sM_0(N) = \sA_{1,(N/2)}(\Theta)$.

\begin{remark}
The problem of defining an algebraic theory of Shimura modular forms of half-integral weight has been previously considered by Nick Ramsey in \cite{Ramsey}(see the Introduction to this article for a discussion comparing the two definitions). 
\end{remark}

Finally, Def. \ref{definition:GeneralModForms} can also be phrased in a `stack-free' way in terms of test objects, in a manner similar to that of \cite{Katz}, Def. 1.1 for classical modular forms of integral weight.  

\begin{definition}[Katz-style]
Let $k\in \ZZ$. A {\em modular form of half-integral weight $k/2$, degree $g$ and type $\delta$} is a rule $f$ which assigns to each quadruple $(A\rightarrow S,\cL, H,\sigma)$ of an abelian scheme $A$ over a scheme $S\rightarrow \Spec(\ZZ[1/d,\zeta_{4}])$, a totally symmetric, relatively ample, normalized invertible sheaf $\cL$ of type $\delta$, together with a maximal symmetric splitting $\sigma$ of $G(\cL)$ over $H$, a section $f(A,\cL, H,\sigma)$ of $\un{\omega}_{A,\Theta}^{k/2}$ (the $k$-th tensor power of the bundle of half-forms over $A$ determined by $\Theta$, the descended symmetric sheaf of degree 1 over $A/H\rightarrow S$ corresponding to the descent data $(H,\sigma)$), such that 
\begin{itemize}
\item[1.] $f(A,\cL, H,\sigma)$ only depends on the isomorphism class of $(A,\cL, H,\sigma)$.
\item[2.] The formation of $f$ is compatible under base-change $S'\rightarrow S$. 
\end{itemize}

\end{definition}

\subsection{Analytic Theory} We now compute the bundle of half-forms analytically in the case $g=1$ and even theta characteristic. The other cases (higher degree and/or odd theta characteristic) can be similarly worked out. Consider again the analytic quotient stack $\sA^{\rm{an}}_{1,1} = [\SL_2(\ZZ)\backslash \mathfrak{h}]$, classifying elliptic curves over an analytic space. The projection map $\mathrm{pr}:\mathfrak{h}\rightarrow [\SL_2(\ZZ)\backslash \mathfrak{h}]$ classifies the universal framed elliptic curve $$\sE:= \CC\times \mathfrak{h}/\Lambda\rightarrow \mathfrak{h},\quad \Lambda = \{(z,\tau)\in \CC\times\mathfrak{h} : z\in \ZZ + \tau\ZZ\}.$$
Isomorphism classes of invertible sheaves over the stack quotient $[\SL_2(\ZZ)\backslash \mathfrak{h}]$ are in 1-1 correspondence with 1-cocycle classes in $H^1(\SL_2(\ZZ), \cO^{\times}_{\mathfrak{h}})$, since $\mathfrak{h}$ is a Stein manifold. In particular, the Hodge bundle $\un{\omega}$ over $\sA^{\rm{an}}_{1,1}$ admits a trivialization
$
dz: \pr^*\un{\omega} \stackrel{\simeq}\longrightarrow \cO_{\mathfrak{h}},
$
whose corresponding 1-cocycle in $Z^1(\SL_2(\ZZ), \cO^{\times}_{\mathfrak{h}})$ is given by
$$
j(\un{\omega}): \smalltwobytwo{a}{b}{c}{d} \longmapsto c\tau + d.
$$
Let $\Mp_2(\ZZ)$ be the {\em metaplectic group}, the unique non-trivial central extension
$$
1 \rightarrow \mu_2 \rightarrow \Mp_2(\ZZ) \rightarrow \SL_2(\ZZ) \rightarrow 0.
$$ 
As is well-known, there is no faithful representation of $\Mp_2(\ZZ)$ into a matrix group. Rather, elements of $\Mp_2(\ZZ)$ can be represented as pairs $(\gamma, \phi)$, with $\gamma=\smalltwobytwo{a}{b}{c}{d}\in\SL_2(\ZZ)$ and $\phi \in \cO_{\mathfrak{h}}^{\times}$ satisfying  $\phi^2 = c\tau + d$. The multiplication in $\Mp_2(\ZZ)$ is then given by the rule
$$
(\gamma_1, \phi_2)(\gamma_2, \phi_2) = (\gamma_1\gamma_2, \phi_1(\gamma_2\tau)\phi_2(\tau)).
$$  
The group $\Mp_2(\ZZ)$ acts on $\mathfrak{h}$ via the action of the quotient $\SL_2(\ZZ)$, and we have, essentially by definition:
\begin{proposition}
The analytification $\sA^{\rm{an}}_{1,1/2}$ of $\sA_{1,1/2}\otimes \CC$ is the analytic quotient stack  $[\Mp_2(\ZZ)\backslash \mathfrak{h}]$.
\end{proposition}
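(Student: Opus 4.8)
The plan is to exhibit both stacks as $\mu_2$-gerbes over $\sA^{\rm{an}}_{1,1}=[\SL_2(\ZZ)\backslash\mathfrak{h}]$ and to match them by an explicit construction, using throughout that $\mathfrak{h}$ is Stein and contractible. First I would recall that, by its very definition as the $2$-fiber product $\sA_1\times_{\mf,\cdot^2}B\GG_m$, the metaplectic stack $\sA^{\rm{an}}_{1,1/2}\to\sA^{\rm{an}}_{1,1}$ is the $\mu_2$-gerbe of square-roots of the Hodge bundle, hence is classified by $\delta[\mf]\in H^2_{\rm{\acute et}}(\sA^{\rm{an}}_{1,1},\mu_2)$, where $\delta$ is the connecting homomorphism of the Kummer sequence \eqref{equation:KummerSequence}. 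Since $\mathfrak{h}$ is contractible, the spectral sequence for the covering $\mathfrak{h}\to[\SL_2(\ZZ)\backslash\mathfrak{h}]$ degenerates to give $H^2_{\rm{\acute et}}([\SL_2(\ZZ)\backslash\mathfrak{h}],\mu_2)\cong H^2(\SL_2(\ZZ),\mu_2)$, exactly in the same way as the text uses $\Pic([\SL_2(\ZZ)\backslash\mathfrak{h}])=H^1(\SL_2(\ZZ),\cO_{\mathfrak{h}}^\times)$; under this identification $[\mf]$ is the class of the automorphy factor $j(\mf)\colon\gamma\mapsto c\tau+d$.

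Next I would compute $\delta[\mf]$ at the cochain level. Because every line bundle on $\mathfrak{h}$ is trivial, for each $\gamma=\smalltwobytwo{a}{b}{c}{d}$ one may choose a holomorphic square root $\phi_\gamma\in\cO_{\mathfrak{h}}^\times$ of $c\tau+d$, and then $\delta[\mf]$ is represented by the $\mu_2$-valued $2$-cocycle $(\gamma_1,\gamma_2)\mapsto\phi_{\gamma_1}(\gamma_2\tau)\,\phi_{\gamma_2}(\tau)\,\phi_{\gamma_1\gamma_2}(\tau)^{-1}$. This is precisely the cocycle built into the stated multiplication rule of $\Mp_2(\ZZ)$: the set-theoretic section $\gamma\mapsto(\gamma,\phi_\gamma)$ of $\Mp_2(\ZZ)\to\SL_2(\ZZ)$ fails to be a homomorphism by exactly this factor. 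Hence the central extension $\Mp_2(\ZZ)$ of $\SL_2(\ZZ)$ by $\mu_2$ represents the class $\delta[\mf]$, and the $\mu_2$-gerbe $[\Mp_2(\ZZ)\backslash\mathfrak{h}]\to[\SL_2(\ZZ)\backslash\mathfrak{h}]$ attached to this extension has the same class as $\sA^{\rm{an}}_{1,1/2}\to\sA^{\rm{an}}_{1,1}$.

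To make the equivalence canonical rather than merely abstract I would write it down: the $1$-cocycle $(\gamma,\phi)\mapsto\phi$ in $Z^1(\Mp_2(\ZZ),\cO_{\mathfrak{h}}^\times)$ (with $\Mp_2(\ZZ)$ acting on $\mathfrak{h}$ through $\SL_2(\ZZ)$) satisfies the cocycle identity by the multiplication law above, hence defines an invertible sheaf $\cW$ on $[\Mp_2(\ZZ)\backslash\mathfrak{h}]$ whose square has cocycle $(\gamma,\phi)\mapsto\phi^2=c\tau+d$, so that $\cW^{\otimes 2}$ is the pullback of $\mf$ along $\pr_{\Mp}\colon[\Mp_2(\ZZ)\backslash\mathfrak{h}]\to[\SL_2(\ZZ)\backslash\mathfrak{h}]=\sA^{\rm{an}}_{1,1}$. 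Descending the universal framed elliptic curve along $\pr_{\Mp}$ and pairing it with $\cW$ produces a $1$-morphism $[\Mp_2(\ZZ)\backslash\mathfrak{h}]\to\sA^{\rm{an}}_{1,1/2}$ over $\sA^{\rm{an}}_{1,1}$; since the central $\mu_2\subseteq\Mp_2(\ZZ)$ acts on $\cW$ by the scalars $\pm1$, this morphism is compatible with the $\mu_2$-bandings of source and target, and a banded morphism of $\mu_2$-gerbes over a common base is an equivalence.

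The main obstacle is the bookkeeping in the first two steps: one must check that the passage from the quotient stack to $\SL_2(\ZZ)$-group cohomology is legitimate (this is exactly where contractibility and the Stein property of $\mathfrak{h}$ enter, as in the text's computation of $\Pic$), and, more importantly, that the $2$-cocycle produced by the Kummer connecting map is literally the one defining $\Mp_2(\ZZ)$ — not merely cohomologous to it by a coboundary that could alter the isomorphism type of the extension. Once these identifications are pinned down, the equivalence of stacks is the routine fact that banded gerbe morphisms are isomorphisms, which is why the proposition is ``essentially by definition''.
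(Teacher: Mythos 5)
Your proposal is correct, and it is a faithful elaboration of the argument the paper leaves implicit: the paper offers no proof beyond ``essentially by definition,'' relying on exactly the identifications you spell out (the cocycle description of sheaves on $[\Gamma\backslash\mathfrak{h}]$ via group cohomology of the Stein, contractible space $\mathfrak{h}$, and the fact that the multiplication law of $\Mp_2(\ZZ)$ encodes precisely the Kummer $2$-cocycle $\delta[\mf]$). One small conceptual slip: your stated worry that the Kummer cocycle might be ``merely cohomologous'' to the metaplectic one ``by a coboundary that could alter the isomorphism type of the extension'' is vacuous, since cohomologous $\mu_2$-valued $2$-cocycles yield isomorphic central extensions and equivalent banded gerbes --- though in any case your explicit banded morphism built from the line bundle with cocycle $(\gamma,\phi)\mapsto\phi$ settles the matter on the nose.
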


Isomorphism classes of invertible sheaves over the stack quotient $[\Mp_2(\ZZ)\backslash \mathfrak{h}]$ are also in 1-1 correspondence with 1-cocycle classes in $H^1(\Mp_2(\ZZ), \cO^{\times}_{\mathfrak{h}})$. In particular, the line bundle $\pr^*\un{\omega}^{1/2}$ admits a trivialization
\begin{equation}
\label{equation:half-trivialization}
dz^{1/2}: \pr^*\un{\omega}^{1/2} \stackrel{\simeq}\rightarrow \cO_{\mathfrak{h}}
\end{equation}
whose corresponding 1-cocycle in $Z^1(\Mp_2(\ZZ), \cO^{\times}_{\mathfrak{h}})$ is given by
$
\left(\gamma, \phi\right) \longmapsto \phi. 
$

Next, recall (Sec. \ref{section:thetaDescentAnalyticTheory}) that $\sX^{+,\rm{an}}_{1,1} = [\Gamma(1,2)\backslash \mathfrak{h}]$, so that we have:

\begin{proposition}
\label{proposition:AnalyticMetaplecticThetaStack}
The analytification $\sX^{+,\rm{an}}_{1,1/2}$ of the metaplectic stack $\sX^{+}_{1,1/2}\otimes\CC$ is the analytic quotient stack  $[\Mp\Gamma(1,2)\backslash \mathfrak{h}]$, where  $\Mp\Gamma(1,2)$ is the inverse image of $\Gamma(1,2)\subseteq \SL_2(\ZZ)$ in $\Mp_2(\ZZ)$. The 1-cocycle in $Z^1(\Mp\Gamma(1,2),\cO_{\mathfrak{h}}^{\times})$ corresponding to $\un{\omega}^{1/2}$ and the trivialization \eqref{equation:half-trivialization} is the restriction of $\left(\gamma, \phi\right) \longmapsto \phi$ to $\Mp\Gamma(1,2)$.
\end{proposition}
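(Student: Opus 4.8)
The plan is to deduce everything from the definition $\sX^+_{1,1/2} = \sA_{1,1/2}\times_{\sA_1}\sX^+_{1,1}$ (the pullback of the metaplectic $\mu_2$-gerbe along the forgetful map $f\colon\sX^+_{1,1}\to\sA_1$) together with the presentations already established: $\sA^{\rm{an}}_{1,1/2} = [\Mp_2(\ZZ)\backslash\mathfrak{h}]$ (the preceding proposition), $\sA^{\rm{an}}_{1,1} = [\SL_2(\ZZ)\backslash\mathfrak{h}]$, and $\sX^{+,\rm{an}}_{1,1} = [\Gamma(1,2)\backslash\mathfrak{h}]$ (Sec.~\ref{section:thetaDescentAnalyticTheory}), under which the map $\sA^{\rm{an}}_{1,1/2}\to\sA^{\rm{an}}_{1,1}$ is induced by the projection $\Mp_2(\ZZ)\twoheadrightarrow\SL_2(\ZZ)$ and $f^{\rm{an}}$ by the inclusion $\Gamma(1,2)\hookrightarrow\SL_2(\ZZ)$. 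First I would use that analytification commutes with $2$-fibre products of algebraic stacks locally of finite type over $\CC$, so that $\sX^{+,\rm{an}}_{1,1/2} = \sA^{\rm{an}}_{1,1/2}\times_{\sA^{\rm{an}}_{1,1}}\sX^{+,\rm{an}}_{1,1}$, reducing the first assertion to a computation of a $2$-fibre product of quotient stacks.

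For that computation I would establish the following general fact and then specialize: if a group $G$ acts on a space $X$, $\pi\colon G'\to G$ is a surjective homomorphism, and $H\to G$ is an arbitrary homomorphism (so $G'$ and $H$ act on $X$ through $G$), then the natural comparison $1$-morphism $[(H\times_G G')\backslash X]\to[H\backslash X]\times_{[G\backslash X]}[G'\backslash X]$ is an equivalence. I would check this on $T$-valued points: from a torsor under $H\times_G G'$ with equivariant map to $X$ one extracts an $H$-torsor, a $G'$-torsor, and a canonical identification of their induced $G$-torsors; conversely, given such data, the $G'$-torsor reduces along $\pi$ to the given $G$-torsor, and the preimage of the $H$-reduction inside it is a torsor under $H\times_G G'$ — here surjectivity of $\pi$ is what guarantees that $H\times_G G'\to H$ is surjective (with kernel $\ker\pi$), so this preimage has the correct structure group. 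Applying this with $G=\SL_2(\ZZ)$, $G'=\Mp_2(\ZZ)$, $H=\Gamma(1,2)$, $X=\mathfrak{h}$, and observing that $\Gamma(1,2)\times_{\SL_2(\ZZ)}\Mp_2(\ZZ)$ is by definition the preimage $\Mp\Gamma(1,2)$ of $\Gamma(1,2)$ in $\Mp_2(\ZZ)$, yields $\sX^{+,\rm{an}}_{1,1/2}=[\Mp\Gamma(1,2)\backslash\mathfrak{h}]$.

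For the cocycle statement I would argue by functoriality. By construction $\un{\omega}^{1/2}$ over $\sX^{+,\rm{an}}_{1,1/2}$ is the pullback of the tautological square root $\un{\omega}^{1/2}$ over $\sA^{\rm{an}}_{1,1/2}=[\Mp_2(\ZZ)\backslash\mathfrak{h}]$ along the map $[\Mp\Gamma(1,2)\backslash\mathfrak{h}]\to[\Mp_2(\ZZ)\backslash\mathfrak{h}]$ induced by the inclusion $\Mp\Gamma(1,2)\hookrightarrow\Mp_2(\ZZ)$, and the trivialization $dz^{1/2}$ of \eqref{equation:half-trivialization} pulls back to itself (being characterized by squaring to $dz$). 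Since $\mathfrak{h}$ is Stein one has $H^1([\Gamma'\backslash\mathfrak{h}],\GG_m)=H^1(\Gamma',\cO_{\mathfrak{h}}^{\times})$ for every subgroup $\Gamma'$, and pullback of invertible sheaves corresponds to restriction of $1$-cocycles; as the cocycle attached to $(\un{\omega}^{1/2},dz^{1/2})$ over $\sA^{\rm{an}}_{1,1/2}$ is $(\gamma,\phi)\mapsto\phi$, the cocycle over $\sX^{+,\rm{an}}_{1,1/2}$ is its restriction to $\Mp\Gamma(1,2)$, as asserted.

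The main obstacle is precisely the identification of the $2$-fibre product of quotient stacks with the quotient by the fibre-product group: the naive identity $BH\times_{BG}BG'=B(H\times_G G')$ fails in general (already $B\{1\}\times_{B\mu_2}B\{1\}$ is the discrete group $\mu_2$, not a point), so the surjectivity of $\Mp_2(\ZZ)\to\SL_2(\ZZ)$ must genuinely be used and the $\mu_2$-gerbe handled with care. If one prefers to bypass this lemma, the alternative is cohomological: $\sX^+_{1,1/2}$ is the $\mu_2$-gerbe over $\sX^+_{1,1}$ of class $f^*\delta[\un{\omega}]=\delta[f^*\un{\omega}]$, and under the isomorphism $H^2_{\mathrm{\acute{e}t}}([\Gamma\backslash\mathfrak{h}],\mu_2)\cong H^2(\Gamma,\mu_2)$ (valid since $\mathfrak{h}$ is contractible) this equals the restriction to $\Gamma(1,2)$ of the image under the Kummer connecting map of the cocycle $\gamma\mapsto c\tau+d$ representing $[\un{\omega}]$ — that is, the extension class of $\Mp\Gamma(1,2)$ — whose associated gerbe over $[\Gamma(1,2)\backslash\mathfrak{h}]$ is exactly $[\Mp\Gamma(1,2)\backslash\mathfrak{h}]$; this route simultaneously pins down the cocycle.
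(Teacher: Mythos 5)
Your argument is correct, and in fact the paper offers no proof of this proposition at all — like the preceding statement for $\sA^{\rm{an}}_{1,1/2}=[\Mp_2(\ZZ)\backslash\mathfrak{h}]$, it is treated as following "essentially by definition" from $\sX_{g,1/2}:=\sA_{g,1/2}\times_{\sA_g}\sX_{g,1}$ and the already-established presentations of the three corner stacks. What you supply is precisely the point the paper elides: the identification $[H\backslash X]\times_{[G\backslash X]}[G'\backslash X]\simeq[(H\times_G G')\backslash X]$ is \emph{not} a formal identity (your $B\{1\}\times_{B\mu_2}B\{1\}\simeq\mu_2$ example is the right one), and you correctly isolate surjectivity of $\Mp_2(\ZZ)\to\SL_2(\ZZ)$ as the hypothesis that makes the torsor-theoretic comparison an equivalence; your sketch of both directions on $T$-points is sound, and the specialization $\Gamma(1,2)\times_{\SL_2(\ZZ)}\Mp_2(\ZZ)=\Mp\Gamma(1,2)$ is immediate. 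The cocycle statement then follows by functoriality exactly as you say, since $dz^{1/2}$ lives over the common cover $\mathfrak{h}$ and restriction of groups corresponds to pullback of invertible sheaves under $H^1(\Gamma',\cO_{\mathfrak{h}}^{\times})\simeq\Pic([\Gamma'\backslash\mathfrak{h}])$. Your alternative cohomological route — matching the class $f^*\delta[\un{\omega}]\in H^2(\Gamma(1,2),\mu_2)$ with the extension class of $\Mp\Gamma(1,2)$ via the Kummer connecting map applied to the cocycle $c\tau+d$ — is also valid and is closer in spirit to how the paper thinks of $\sA_{g,1/2}$ as the gerbe $\delta[\un{\omega}]$; its only drawback is that a gerbe is determined by its class only up to non-unique isomorphism, so the first route is the better one for pinning down the specific cocycle in the second assertion, as you note.
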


The map $\pr: \mathfrak{h} \rightarrow [\Gamma(1,2)\backslash \mathfrak{h}]$ now classifies the pair $(\sE, Th)$, where $Th$ is the symmetric line bundle of degree 1 over $\sE\rightarrow \mathfrak{h}$ given by the divisor of the 2-variable Riemann theta function $\sum_{n\in\ZZ} e^{\pi i \tau n^2 + 2\pi i n z}$. The theta multiplier bundle $\cM(Th) = \pr^*\cM_{\Theta}$ is a trivial $\mu_4$-torsor over $\mathfrak{h}$, and it admits a trivialization $c: \pr^*\cM_{\Theta} \stackrel{\simeq}\longrightarrow \mu_4$ via a constant section $c$ such that the corresponding 1-cocycle in $Z^1(\Gamma(1,2),\mu_4)$ is the character
$$
\lambda^{-1}: \Gamma(1,2) \longrightarrow \mu_4.
$$
Here $\lambda$ is the extension of the discriminant (Thm. \ref{theorem:discriminant})  of $(\ZZ/4\ZZ^2, \ppair{}{}_4, e_*^+)$ to $\Gamma(1,2)$ via the reduction mod 4 map $\Gamma(1,2)\rightarrow \Gamma(2,+)$. Consider then the central extension 
$$
1 \rightarrow \mu_2 \rightarrow \widetilde{\Gamma}(1,2) \rightarrow \Gamma(1,2) \rightarrow 1,
$$ 
where  $\widetilde{\Gamma}(1,2)$ is the group of pairs $(\gamma, \mu) \in \Gamma(1,2)\times \mu_8$ such that $\mu^2 = \lambda(\gamma)^{-1}$, with multiplication given by $(\gamma_1, \mu_1)(\gamma_2,\mu_2) = (\gamma_1\gamma_2, \mu_1\mu_2)$. We then have: 

\begin{proposition}
\label{proposition:AnalyticTildeStack}
The analytification $\widetilde{\sX}^{+,\rm{an}}_{1,1}$ of $\widetilde{\sX}^{+}_{1,1}\otimes\CC$ is the analytic quotient stack  $[\widetilde{\Gamma}(1,2)\backslash \mathfrak{h}]$. The line bundle $\cM_{\Theta}^{1/2}$ over $\widetilde{\sX}^{+,\rm{an}}_{1,1}$ admits a trivialization $c^{1/2}$ over $\mathfrak{h}$ such that its corresponding 1-cocycle in $Z^1(\widetilde{\Gamma}(1,2),\cO_{\mathfrak{h}}^{\times})$ is give by the character $(\gamma, \mu) \mapsto \mu$. 
\end{proposition}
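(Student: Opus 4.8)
The plan is to pull the whole situation back along $\pr\colon\mathfrak{h}\to\sX^{+,\mathrm{an}}_{1,1}=[\Gamma(1,2)\backslash\mathfrak{h}]$, where every invertible sheaf in sight becomes trivial, and then to read off the descent datum directly. The two inputs I would use are already available from the material preceding the statement: first, that $\sX^{+,\mathrm{an}}_{1,1}=[\Gamma(1,2)\backslash\mathfrak{h}]$ with $\pr$ classifying $(\sE,Th)$; and second, that $\pr^*\cM_{\Theta}$ is the trivial $\mu_4$-torsor, equipped with a distinguished trivialization $c$ whose associated $1$-cocycle in $Z^1(\Gamma(1,2),\mu_4)$ is the character $\lambda^{-1}$. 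In particular the associated invertible sheaf $\pr^*\cM_{\Theta}$ is $\cO_{\mathfrak{h}}$, and $\gamma\in\Gamma(1,2)$ acts on it, through the equivariant structure inherited from $\sX^{+,\mathrm{an}}_{1,1}$, by multiplication by the constant $\lambda(\gamma)^{-1}\in\mu_4$.

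First I would pull back the defining cartesian square \eqref{equation:tildeStack} of $\widetilde{\sX}^+_{1,1}$ along $\pr$. Since $\mathfrak{h}$ is contractible, $\Pic(\mathfrak{h})=0$ and $H^2(\mathfrak{h},\mu_2)=0$, so the $\mu_2$-gerbe $\pr^*\widetilde{\sX}^+_{1,1}\to\mathfrak{h}$ is trivial; an explicit section is the pair $\cL_0:=\bigl(\cO_{\mathfrak{h}},\,c^{-1}\colon\cO_{\mathfrak{h}}=\cO_{\mathfrak{h}}^{\otimes 2}\stackrel{\simeq}\to\pr^*\cM_{\Theta}\bigr)$, any other square-root being non-canonically isomorphic to it with automorphism group $\mu_2$. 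Consequently $\widetilde{\sX}^{+,\mathrm{an}}_{1,1}$ is the quotient stack $[\widetilde{\Gamma}\backslash\mathfrak{h}]$, where $\widetilde{\Gamma}$ is the group of pairs $(\gamma,\theta)$ with $\gamma\in\Gamma(1,2)$ and $\theta\colon\gamma^*\cL_0\stackrel{\simeq}\to\cL_0$ an isomorphism whose square is the equivariant isomorphism of $\pr^*\cM_{\Theta}$ attached to $\gamma$. Writing $\theta$ as multiplication by $\mu\in\cO_{\mathfrak{h}}^{\times}$, the compatibility condition forces $\mu^2=\lambda(\gamma)^{-1}$, so $\mu$ is a constant lying in $\mu_8$; and because $\lambda^{-1}$ takes constant values, composing equivariant isomorphisms yields $(\gamma_1,\mu_1)\cdot(\gamma_2,\mu_2)=(\gamma_1\gamma_2,\mu_1\mu_2)$ with no further twist. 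This identifies $\widetilde{\Gamma}$ with $\widetilde{\Gamma}(1,2)$ as claimed. Finally, $\cM_{\Theta}^{1/2}$ is the tautological square-root over $[\widetilde{\Gamma}(1,2)\backslash\mathfrak{h}]$, which by construction pulls back to $\cL_0=\cO_{\mathfrak{h}}$; calling the resulting trivialization $c^{1/2}$ (it squares to $c$), the computation just performed shows that $(\gamma,\mu)\in\widetilde{\Gamma}(1,2)$ acts on $c^{1/2}$ by multiplication by $\mu$, i.e.\ the associated $1$-cocycle is $(\gamma,\mu)\mapsto\mu$.

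The one point requiring genuine care — and the step I expect to be the main obstacle — is the assertion that the multiplication on $\widetilde{\Gamma}$ carries no $2$-cocycle correction, i.e.\ that the central extension obtained is exactly $\{(\gamma,\mu)\in\Gamma(1,2)\times\mu_8:\mu^2=\lambda(\gamma)^{-1}\}$ with the naive product. This rests entirely on the fact, established earlier, that the $1$-cocycle of $\cM_{\Theta}$ relative to $c$ is an honest group homomorphism $\lambda^{-1}$ valued in the constants $\mu_4\subset\cO_{\mathfrak{h}}^{\times}$, rather than a cocycle taking values in non-constant holomorphic functions: with that in place, pulling back a constant-valued multiplication along $\gamma_2$ does nothing, and the bookkeeping collapses to the arithmetic of roots of unity, using that every element of $\mu_4$ admits a square-root in $\mu_8$. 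Everything else is formal manipulation of quotient stacks and $\mu_2$-gerbes over the contractible base $\mathfrak{h}$.
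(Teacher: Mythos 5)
Your argument is correct, and it is precisely the formal unwinding that the paper leaves implicit: the paper states this proposition without proof, treating it as following from the definition of $\widetilde{\sX}^{+}_{1,1}$ as the $\mu_2$-gerbe \eqref{equation:tildeStack} together with the already-established facts that $\sX^{+,\rm{an}}_{1,1}=[\Gamma(1,2)\backslash\mathfrak{h}]$ and that $\pr^*\cM_{\Theta}$ trivializes with cocycle $\lambda^{-1}$. Your careful justification of the two nontrivial points — triviality of the pulled-back gerbe over the contractible base, and the absence of a $2$-cocycle twist because $\lambda^{-1}$ is a constant-valued homomorphism — supplies exactly the details the author omits.
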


Finally, let $\widehat{\Gamma}(1,2)$ be the Baer sum of the two extensions $\Mp\Gamma(1,2)$ and $\widetilde{\Gamma}(1,2)$. Explicitly, this is the set of pairs $(\gamma = \smalltwobytwo{a}{b}{c}{d}, \psi) \in \Gamma(1,2)\times\mathcal{O}^{\times}_{\mathfrak{h}}$ with $\psi^2 = \lambda^{-1}(\gamma)(c\tau + d)$, and with multiplication given by
$
(\gamma_1, \psi_1)(\gamma_2, \psi_2) = (\gamma_1\gamma_2, \psi_1(\gamma_2\tau)\psi_2(\tau)).
$
By Props. \ref{proposition:AnalyticMetaplecticThetaStack} and \ref{proposition:AnalyticTildeStack}, we have
$$
\widetilde{\sX}^{+,\rm{an}}_{g,1}\times^{\mu_2}_{\sX^{+,\rm{an}}_{g,1}}{\sX^{\rm{an}}_{g,1/2}} = [\widehat{\Gamma}(1,2)\backslash \mathfrak{h}],
$$
and the line bundle $\cM_{\Theta}^{1/2}\otimes \un{\omega}^{1/2}$ corresponds to the 1-cocycle 
\begin{equation}
\label{equation:thetaHalfCocycle}
j(\cM_{\Theta}^{1/2}\otimes \un{\omega}^{1/2}): (\gamma, \psi) \longmapsto \psi
\end{equation}
with respect to the given trivializations of Props. \ref{proposition:AnalyticMetaplecticThetaStack} and \ref{proposition:AnalyticTildeStack}. By Thm. \ref{theorem:MetaplecticCorrection} this $\mu_2$-gerbe over $\sX^{+,\rm{an}}_{g,1}$ is trivial, i.e. the central extension $\widehat{\Gamma}(1,2)$ is split.

\begin{theorem}
The canonical trivialization $s_{\Theta}$ of Thm. \ref{theorem:MetaplecticCorrection} corresponds to the  section 
$$
s_{\Theta}: \Gamma(1,2) \longrightarrow \widehat{\Gamma}(1,2)
$$
given by $
s_{\Theta}(\gamma) = (\gamma, \vartheta(\gamma \tau)\vartheta(\tau)^{-1})
$, where $\vartheta(\tau) = \sum_{n\in \ZZ} e^{\pi i n^2 \tau}$ is the (1-variable) Riemann-Jacobi theta function.
\end{theorem}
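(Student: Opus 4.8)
The plan is to unwind the definition of $s_{\Theta}$ given in the proof of Thm.~\ref{theorem:MetaplecticCorrection} and to translate it through the analytic identifications established earlier in this subsection. Recall that $s_{\Theta}$ is the trivialization of the $\mu_2$-gerbe $\widetilde{\sX_{g,1}}\times^{\mu_2}_{\sX_{g,1}}{\sX_{g,1/2}}$ \emph{induced by the invertible sheaf $\cJ_{g,1}^{-1}$} via the key formula $\cJ^{-\otimes 2}_{g,1}\simeq\cM_{\Theta}\otimes\mf$ of \cite{C1}. In the analytic model for $g=1$ and even theta characteristic this gerbe is the central extension $\widehat{\Gamma}(1,2)$, and the bundle $\cM_{\Theta}^{1/2}\otimes\un{\omega}^{1/2}$ carries the cocycle $(\gamma,\psi)\mapsto\psi$ (equation \eqref{equation:thetaHalfCocycle}); consequently a trivialization of the extension coming from a square root $\cN$ of $\cM_{\Theta}\otimes\mf$ together with a trivialization of $\pr^*\cN$ over $\mathfrak{h}$ is exactly the homomorphism $\gamma\mapsto(\gamma, j(\cN)(\gamma))$, where $j(\cN)\in Z^1(\Gamma(1,2),\cO_{\mathfrak{h}}^{\times})$ is the associated automorphy cocycle. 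Thus the statement reduces to identifying this cocycle for $\cN=\cJ_{1,1}^{-1}$ with respect to its natural analytic trivialization.

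First I would make that trivialization explicit. By definition $\cJ_{1,1}=\pi_*Th$, and over $\mathfrak{h}$ it is generated by $\theta(z,\tau)=\sum_{n\in\ZZ}e^{\pi i\tau n^2+2\pi i n z}$, the essentially unique holomorphic section of $Th$ on $E_{\tau}=\CC/\Lambda_{\tau}$. Its behaviour under $\gamma=\smalltwobytwo{a}{b}{c}{d}\in\Gamma(1,2)$ is controlled by the classical theta transformation formula; since $\Gamma(1,2)$ is precisely the stabilizer of the even theta characteristic (as used in the proof of Lem.~\ref{lemma:DesMap}), no change of characteristic occurs and one has
\[
\theta\!\left(\frac{z}{c\tau+d},\,\gamma\tau\right)=\kappa(\gamma)\,(c\tau+d)^{1/2}\,e^{\pi i c z^2/(c\tau+d)}\,\theta(z,\tau)
\]
for a root of unity $\kappa(\gamma)$. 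The $z$-dependent exponential belongs to the normalized isomorphism $\phi_\gamma^*Th_{\gamma\tau}\simeq Th_{\tau}$ (the one respecting the normalization of $Th$ at the identity section) that makes $Th$ descend to the universal curve over $[\Gamma(1,2)\backslash\mathfrak{h}]$; evaluating at $z=0$, where it is trivial, isolates the scaling of the generating section and yields $j(\cJ_{1,1})(\gamma)=\vartheta(\tau)\,\vartheta(\gamma\tau)^{-1}$ with $\vartheta(\tau)=\theta(0,\tau)=\sum_n e^{\pi i n^2\tau}$, hence, dualizing, $j(\cJ_{1,1}^{-1})(\gamma)=\vartheta(\gamma\tau)\,\vartheta(\tau)^{-1}$. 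Before concluding I would record the compatibility $[\vartheta(\gamma\tau)\vartheta(\tau)^{-1}]^2=\kappa(\gamma)^2(c\tau+d)=\lambda^{-1}(\gamma)(c\tau+d)$, so that the pair $(\gamma,\vartheta(\gamma\tau)\vartheta(\tau)^{-1})$ genuinely lies in $\widehat{\Gamma}(1,2)$; this is the squared theta-multiplier identity, equivalently the analytic content of the cocycle $\lambda^{-1}$ of $\cM_{\Theta}$ recalled above, and it is also forced by the key formula together with $j(\mf)(\gamma)=c\tau+d$. Combining with the first paragraph, $s_{\Theta}(\gamma)=(\gamma,\vartheta(\gamma\tau)\vartheta(\tau)^{-1})$; the homomorphism property is then automatic, matching the multiplication of $\widehat{\Gamma}(1,2)$ via $\vartheta(\gamma_1\gamma_2\tau)\vartheta(\tau)^{-1}=[\vartheta(\gamma_1(\gamma_2\tau))\vartheta(\gamma_2\tau)^{-1}][\vartheta(\gamma_2\tau)\vartheta(\tau)^{-1}]$.

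The main obstacle is the bookkeeping in the second step: one must use the \emph{normalized} isomorphism $\phi_\gamma^*Th_{\gamma\tau}\simeq Th_{\tau}$ so as to obtain the exact cocycle $\vartheta(\gamma\tau)\vartheta(\tau)^{-1}$ rather than merely its cohomology class, one must correctly separate the $z$-dependent factor (which is part of that line-bundle identification) from the $z$-independent automorphy factor of the one-dimensional space $H^0(E_\tau,Th_{\tau})$, and one must keep consistent track of the cocycle convention pinned down by the trivialization $dz$ of $\mf$ (and of the dualization passing from $\cJ_{1,1}$ to $\cJ_{1,1}^{-1}$). Everything else is a direct transcription of Thm.~\ref{theorem:MetaplecticCorrection} and of the analytic descriptions of $\cM_{\Theta}^{1/2}$ and $\un{\omega}^{1/2}$ obtained earlier.
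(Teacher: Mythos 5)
Your proposal is correct and follows essentially the same route as the paper: identify $s_{\Theta}$ as the trivialization induced by $\cJ_{g,1}^{-1}$, trivialize $\pr^*\cJ_{1,1}^{-1}$ over $\mathfrak{h}$ via the Riemann--Jacobi theta function, read off the cocycle $\gamma\mapsto\vartheta(\gamma\tau)\vartheta(\tau)^{-1}$, and use the classical functional equation $\vartheta(\gamma\tau)^2\vartheta(\tau)^{-2}=\lambda(\gamma)^{-1}(c\tau+d)$ to see the pair lands in $\widehat{\Gamma}(1,2)$. The only difference is that you derive the trivializing cocycle from the two-variable transformation formula of $\theta(z,\tau)$ (evaluating at $z=0$), whereas the paper simply asserts the trivialization by $\vartheta(\tau)$; your extra bookkeeping is consistent with the paper's conventions.
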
 
\begin{proof}
To compute the trivialization $s_{\Theta}$ of Thm. \ref{theorem:MetaplecticCorrection} explicitly, note that the line bundle $\cJ_{g,1}^{-1}$ over $\sX^{+,\rm{an}}_{g,1}$ admits a trivialization
$$
\vartheta(\tau): \pr^*\cJ_{g,1}^{-1} \stackrel{\simeq}\longrightarrow \cO_{\mathfrak{h}}
$$
by the Riemann-Jacobi theta function (which is non-vanishing on $\mathfrak{h}$). The corresponding 1-cocycle is given by
$
\gamma \mapsto \vartheta(\gamma \tau)\vartheta(\tau)^{-1}
$
which gives the formula for $s_{\Theta}$ above (note that by the classical functional equation of $\vartheta(\tau)$, it follows that for all $\gamma = \smalltwobytwo{a}{b}{c}{d} \in \Gamma(1,2)$, 
$$
\vartheta(\gamma \tau)^2\vartheta(\tau)^{-2} = \lambda(\gamma)^{-1}(c\tau+d)
$$
and the map $s_{\Theta}$ is indeed a splitting of $\widehat{\Gamma}(1,2)\rightarrow \Gamma(1,2)$). 
\end{proof}

In particular, the 1-cocycle in $Z^1(\Gamma(1,2), \cO_{\mathfrak{h}}^{\times})$ corresponding to the bundle of half-forms $\un{\omega}^{1/2}_{\Theta} = s_{\Theta}^*(\cM_{\Theta}^{1/2}\otimes\un{\omega}^{1/2})$ (with respect to the given trivializations of $\cM_{\Theta}^{1/2}$ and $\un{\omega}^{1/2}$ in Props. \ref{proposition:AnalyticMetaplecticThetaStack}, \ref{proposition:AnalyticTildeStack}) is given by
\begin{equation}
\label{equation:cocycleOfHalf-Forms}
j(\un{\omega}_{\Theta}^{1/2}) = j(\cM_{\Theta}^{1/2}\otimes \un{\omega}^{1/2})\circ s_{\Theta}: \gamma \mapsto \vartheta(\gamma \tau)\vartheta(\tau)^{-1}.
\end{equation}

Alternatively, the cocycle of half-forms can be computed using Cor. \ref{corollary:metaplecticCharacter}, as follows. Let 
$$
j(\un{\omega}^{1/2}): (\gamma, \phi) \longmapsto \phi
$$
be the cocycle corresponding to the line bundle $\un{\omega}^{1/2}$ over the metaplectic stack $\sX^{+,\rm{an}}_{g,1/2} = [\Mp\Gamma(1,2)\backslash \mathfrak{h}]$ with respect to the trivialization \eqref{equation:half-trivialization}. Cor. \ref{corollary:metaplecticCharacter} then translates into:   

\begin{proposition}
\label{proposition:tildeCharacter}
There exists a unique homomorphism
$$
\widetilde{\lambda}: \Mp\Gamma(1,2) \longrightarrow \mu_8
$$
such that $\widetilde{\lambda}^{-1}\cdot j(\un{\omega}^{1/2})(\gamma,\phi)(\tau) =\vartheta(\gamma \tau) \vartheta(\tau)^{-1} = j(\un{\omega}_{\Theta}^{1/2})$.
\end{proposition}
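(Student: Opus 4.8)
The plan is to read the isomorphism of Corollary~\ref{corollary:metaplecticCharacter} through the explicit trivializations already in place. Over the metaplectic stack $\sX^{+,\rm{an}}_{1,1/2}=[\Mp\Gamma(1,2)\backslash\mathfrak{h}]$ the pulled-back line bundles $\un{\omega}^{1/2}$ and $p_{1/2}^*\un{\omega}^{1/2}_\Theta$ carry the trivializations with $1$-cocycles $(\gamma,\phi)\mapsto\phi$ (Prop.~\ref{proposition:AnalyticMetaplecticThetaStack}) and $(\gamma,\phi)\mapsto\vartheta(\gamma\tau)\vartheta(\tau)^{-1}$ (by \eqref{equation:cocycleOfHalf-Forms}, the formula being independent of $\phi$ since $\un{\omega}^{1/2}_\Theta$ lives downstairs on $[\Gamma(1,2)\backslash\mathfrak{h}]$). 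The isomorphism $\cM_\Theta^{1/2}\otimes\un{\omega}^{1/2}\simeq p_{1/2}^*\un{\omega}^{1/2}_\Theta$ of Corollary~\ref{corollary:metaplecticCharacter} then equips the canonical square root $\cM_\Theta^{1/2}$ over $\sX^{+,\rm{an}}_{1,1/2}$ with the trivialization whose $1$-cocycle is
$$
\widetilde\lambda^{-1}\colon (\gamma,\phi)\longmapsto \vartheta(\gamma\tau)\,\vartheta(\tau)^{-1}\,\phi(\tau)^{-1},
$$
and I would define $\widetilde\lambda$ to be the inverse of this.

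It then remains to verify that $\widetilde\lambda^{-1}$ is a genuine $\mu_8$-valued group homomorphism on $\Mp\Gamma(1,2)$. For the target: squaring the displayed formula and applying the identity $\vartheta(\gamma\tau)^2\vartheta(\tau)^{-2}=\lambda(\gamma)^{-1}(c\tau+d)$ established in the preceding proof, together with $\phi^2=c\tau+d$, gives $\widetilde\lambda^{-1}(\gamma,\phi)^2=\lambda(\gamma)^{-1}\in\mu_4$; since $\mathfrak{h}$ is connected, a holomorphic function with constant square is constant, so $\widetilde\lambda^{-1}(\gamma,\phi)$ is a well-defined element of $\mu_8$. For multiplicativity: the functions $(\gamma,\phi)\mapsto\phi(\tau)$ and $\gamma\mapsto\vartheta(\gamma\tau)\vartheta(\tau)^{-1}$ each satisfy the $1$-cocycle identity $j(g_1g_2)(\tau)=j(g_1)(\gamma_2\tau)\,j(g_2)(\tau)$ for the action of $\Mp\Gamma(1,2)$ on $\mathfrak{h}$ through $\Gamma(1,2)\subseteq\SL_2(\ZZ)$ — the first by the multiplication law of $\Mp_2(\ZZ)$, the second by the translation invariance of the argument of $\vartheta$; dividing the two identities and using the $\tau$-independence of $\widetilde\lambda^{-1}$ (so its value at $\gamma_2\tau$ equals its value at $\tau$) yields $\widetilde\lambda^{-1}(g_1g_2)=\widetilde\lambda^{-1}(g_1)\widetilde\lambda^{-1}(g_2)$.

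Uniqueness is immediate from the statement: the required equality $\widetilde\lambda^{-1}\cdot j(\un{\omega}^{1/2})(\gamma,\phi)(\tau)=\vartheta(\gamma\tau)\vartheta(\tau)^{-1}$ together with $j(\un{\omega}^{1/2})(\gamma,\phi)(\tau)=\phi(\tau)\neq0$ forces the displayed formula for $\widetilde\lambda^{-1}$, so any two solutions coincide. The only non-formal ingredient is the functional equation of $\vartheta$, which is precisely what makes the manifestly $\tau$-dependent expression $\vartheta(\gamma\tau)\vartheta(\tau)^{-1}\phi(\tau)^{-1}$ collapse to a constant $8$-th root of unity; I expect the only point genuinely requiring care (the mild ``main obstacle'') to be checking that the trivializations of Propositions~\ref{proposition:AnalyticMetaplecticThetaStack} and \ref{proposition:AnalyticTildeStack} are compatible with the canonical isomorphism of $\mu_2$-gerbes $\widetilde{\sX}^{+,\rm{an}}_{1,1}\simeq\sX^{+,\rm{an}}_{1,1/2}$ preceding Corollary~\ref{corollary:metaplecticCharacter}, so that the cocycle above is genuinely that of the canonical square root $\cM_\Theta^{1/2}$ and not of a twist of it.
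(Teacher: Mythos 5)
Your proof is correct and follows essentially the same route as the paper, which obtains the proposition by translating Corollary \ref{corollary:metaplecticCharacter} through the explicit trivializations, with the functional equation $\vartheta(\gamma\tau)^2\vartheta(\tau)^{-2}=\lambda(\gamma)^{-1}(c\tau+d)$ doing the real work in both cases. Note that your direct verification (constant square in $\mu_4$ forces a constant value in $\mu_8$; quotient of two cocycles with constant ratio is a homomorphism; the formula is forced, giving uniqueness) already establishes the statement on its own, so the gerbe-compatibility check you flag at the end as the ``main obstacle'' is not actually needed for the proposition as stated --- it only matters for identifying $\widetilde{\lambda}^{-1}$ with the cocycle of the canonical square root $\cM_{\Theta}^{1/2}$.
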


The proposition is enough to compute $\widetilde{\lambda}$ explicitly. For example, 
$
\widetilde{\lambda}(\smalltwobytwo{0}{-1}{1}{0},\sqrt{\tau}) = \sqrt{-i}
$, since $\widetilde{\lambda}^{-1}\cdot j(\un{\omega}^{1/2})(\smalltwobytwo{0}{-1}{1}{0},\sqrt{\tau}) = \widetilde{\lambda}^{-1}(\smalltwobytwo{0}{-1}{1}{0},\sqrt{\tau})\sqrt{\tau} = \sqrt{i}\sqrt{\tau}$.

\subsection{} Next, we want to compute analytically the sections of $\cL_{1,(2)}^{k/2}$ (Def. \ref{definition:ShimuraModularForm}) over $\sA_{1,(2)}(\Theta)$, i.e. Shimura modular forms of half-integral weight, and show that they coincide with the classical definition of \cite{Shimura}. Consider first the map
$$
\sM^{\rm{an}}_0(2) = \left[\Gamma_0(2)\backslash\mathfrak{h}\right] \stackrel{V}\longrightarrow \sA^{\rm{an}}_{1,1} = \left[\SL_2(\ZZ)\backslash\mathfrak{h}\right]
$$
which is {\em not} the forgetful map, but rather the map induced at the modular level by $(E, H) \mapsto E/H$, where $H\subseteq E$ is a subgroup locally isomorphic to $\un{\ZZ/2\ZZ}$. As in the proof of Lemma \ref{lemma:DesMap}, this map corresponds by functoriality to a map of fundamental groups
$$
\smalltwobytwo{a}{b}{c}{d} \in \Gamma_0(2) \stackrel{V_{\pi_1}}\longmapsto \smalltwobytwo{a}{2b}{c/2}{d}\in \SL_2(\ZZ).
$$   
The map $V$ can also be lifted to the universal covering space $\mathfrak{h}$, and we have

\begin{proposition}
\label{proposition:V-holo}
The map $V$ lifts to the holomorphic map $\mathfrak{h}\rightarrow \mathfrak{h}$ given by $\tau \mapsto 2\tau$. 
\end{proposition}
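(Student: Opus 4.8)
The plan is to track the universal family under $V$ and read off the holomorphic lift. First I would recall, as in the proof of Theorem~\ref{theorem:Gamma0ModuliStack} (specialised to $m=2$), that the projection $\pr\colon\mathfrak{h}\to\sM^{\rm{an}}_0(2)=[\Gamma_0(2)\backslash\mathfrak{h}]$ classifies the pair $(\sE,\cH)$, where $\sE=\CC\times\mathfrak{h}/\Lambda\to\mathfrak{h}$ is the universal framed elliptic curve and $\cH\subseteq\sE$ is the order-two subgroup scheme whose fibre over $\tau$ is $\langle 1/2\rangle\subseteq E_\tau[2]$; here $\Gamma_0(2)$ is precisely the stabiliser of this subgroup under the action of $\SL_2(\ZZ)$ (via $\gamma\mapsto\gamma^T$) on level structures.

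Next I would compute $V$ fibrewise. By construction $V(E_\tau,\cH_\tau)=E_\tau/\langle 1/2\rangle=\CC/(\ZZ\tau+\tfrac12\ZZ)$. Since $\ZZ\tau+\tfrac12\ZZ=\tfrac12(\ZZ\cdot 2\tau+\ZZ)=\tfrac12\Lambda_{2\tau}$, multiplication by $2$ on $\CC$ induces an isomorphism $E_\tau/\langle 1/2\rangle\cong E_{2\tau}$, and these isomorphisms depend holomorphically on $\tau$; hence they assemble into an isomorphism over $\mathfrak{h}$ between $V^{*}(\sE\to\mathfrak{h})$ and the pullback of the universal elliptic curve along the map $\tau\mapsto 2\tau$. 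It follows that the composite $\mathfrak{h}\xrightarrow{\pr}\sM^{\rm{an}}_0(2)\xrightarrow{V}\sA^{\rm{an}}_{1,1}$ equals the composite $\mathfrak{h}\xrightarrow{\tau\mapsto 2\tau}\mathfrak{h}\xrightarrow{\pr}\sA^{\rm{an}}_{1,1}$, which is exactly the asserted statement that $V$ lifts to $\tau\mapsto 2\tau$.

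As a consistency check I would verify equivariance against the map $V_{\pi_1}$ of fundamental groups computed just above: for $\gamma=\smalltwobytwo{a}{b}{c}{d}\in\Gamma_0(2)$ one has $2(\gamma\tau)=\tfrac{a(2\tau)+2b}{(c/2)(2\tau)+d}=\smalltwobytwo{a}{2b}{c/2}{d}\cdot(2\tau)$, so $\tau\mapsto 2\tau$ is $V_{\pi_1}$-equivariant and is thus the unique holomorphic lift of $V$ through the universal cover. I expect the only delicate point to be the convention bookkeeping — identifying which $\Gamma_0(2)$-level structure the section $\tau\mapsto 1/2$ represents and the corresponding homothety factor — but once this is aligned with the proof of Theorem~\ref{theorem:Gamma0ModuliStack} the argument is routine and presents no genuine obstacle.
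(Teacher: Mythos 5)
Your proof is correct, and it reaches the same conclusion by a slightly different route than the paper. The paper's proof works entirely on the universal objects: it writes a putative lift of $V$ to the universal covers as $(z,\tau)\mapsto(z,V_{\mathfrak h}(\tau))$, imposes equivariance of this map of universal elliptic curves with respect to the already-computed homomorphism $V_{\pi_1}\colon\Gamma_0(2)\to\SL_2(\ZZ)$, $\smalltwobytwo{a}{b}{c}{d}\mapsto\smalltwobytwo{a}{2b}{c/2}{d}$, and solves the resulting functional equation, finding $V_{\mathfrak h}(\tau)=2\tau$ as the unique solution. You instead compute the quotient $E_\tau/\langle 1/2\rangle$ directly at the level of lattices, identify it with $E_{2\tau}$ via the homothety $z\mapsto 2z$, and use $V_{\pi_1}$-equivariance only as a consistency check pinning down the lift. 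The two arguments are logically interchangeable here; what your version buys is that it makes the fibrewise isogeny explicit, including the homothety factor $2$ on the $z$-coordinate (the paper's normalization $(z,\tau)\mapsto(z,2\tau)$ is, strictly speaking, only the identity into the unnormalized quotient $\CC/(\ZZ\tau+\tfrac12\ZZ)$ rather than a well-defined map to $\CC/\Lambda_{2\tau}$), whereas the paper's version isolates the equivariance constraint as the sole input and gets uniqueness for free. Your identification of the universal level structure as $\tau\mapsto\langle 1/2\rangle$ is the correct one under the paper's conventions (it is exactly the subgroup whose $\gamma^T$-stabilizer is $\Gamma_0(2)$, as in the proof of Thm.~\ref{theorem:Gamma0ModuliStack}), so the "delicate point" you flag is indeed already settled.
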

\begin{proof}
Denote by $\sE(1)$ the universal elliptic curve over $\sA^{\rm{an}}_{1,1}$. This is the quotient stack $\CC\times\mathfrak{h}/ \Lambda\ltimes \SL_2(\ZZ)$, where the action of $\gamma = \smalltwobytwo{a}{b}{c}{d}\in \SL_2(\ZZ)$ is given by $(\frac{z}{c\tau + d}, \gamma\tau)$. Similarly, denote by  $\sE_0(2)$ the universal elliptic curve over $\sM^{\rm{an}}_0(2)$, which is the same quotient only restricted to $\Gamma_0(2)$. The map $V$ lifts to a holomorphic map $V_{\mathfrak{h}}(\tau) \in \cO_{\mathfrak{h}}$ such that the map
$
(z,\tau) \longrightarrow (z, V_{\mathfrak{h}}(\tau))
$
must descend to an equivariant morphism $\sE_0(2) \rightarrow \sE(1)$ lying over $V$, i.e it must satisfy
$$
\left(\frac{z}{c\tau + d}, V_{\mathfrak{h}}(\gamma\tau)\right)   = V_{\pi_1}(\gamma)(z, V_{\mathfrak{h}}(\tau))
$$
for all $\gamma = \smalltwobytwo{a}{b}{c}{d}\in \Gamma_0(2)$. A simple computation shows that $V_{\mathfrak{h}}(\tau) = 2\tau$ is the only such map.
\end{proof}

Next, consider the descent map $\Des: \sM^{\rm{an}}_0(4) \rightarrow \sX_{1,1}^{\rm{an}}$. By Thm. \ref{theorem:DesMapFactorization} we know that this map factors as 
$$
\sM^{\rm{an}}_0(4) = \left[\Gamma_0(4)\backslash\mathfrak{h}\right] \stackrel{\Des}\longrightarrow \sX_{1,1}^{+,\rm{an}} = \left[\Gamma(1,2)\backslash\mathfrak{h}\right],
$$
so it suffices to only consider the even component of $\sX_{1,1}^{\rm{an}}$. Note that $\Des$ fits in the commutative diagram 
$$
\begin{tikzcd}
\sM^{\rm{an}}_0(4)\arrow{r}{\Des}\arrow{d}{}  &\sX_{1,1}^{+,\rm{an}} \arrow{d} \\
\sM^{\rm{an}}_0(2)\arrow{r}{V} & \sA_{1,1}^{\rm{an}},
\end{tikzcd}
$$
where the vertical arrows are the natural forgetful maps corresponding by functoriality to the inclusion of the corresponding fundamental groups. Therefore, the holomorphic map $\Des_{\mathfrak{h}} \in \cO_{\mathfrak{h}}$ induced by $\Des$ on the universal covering space is also given by $\tau \rightarrow 2\tau$, as in Prop. \ref{proposition:V-holo}. 

\begin{theorem}
\label{theorem:AnalyticShimuraModularFormsCoincide}
Let $k \in \ZZ$. The line bundle $\cL_{1,(2)}^{k/2} := \Des^*\un{\omega}^{k/2}_{\Theta}$ over $\sM^{\rm{an}}_0(4)$ corresponds (with respect to the same trivializations over $\mathfrak{h}$ as in \eqref{equation:cocycleOfHalf-Forms}) to the 1-cocycle in $Z^1(\Gamma_0(4), \cO_{\mathfrak{h}}^{\times})$ given by
$$
j_{k/2}: \gamma \longmapsto \left(\vartheta_2(\gamma\tau)\vartheta^{-1}_2(\tau)\right)^k, 
$$
where $\vartheta_2(\tau) = \sum_{n\in\ZZ}e^{2\pi i n^2 \tau} = \vartheta(2\tau)$ is the level 2 theta constant.   
\end{theorem}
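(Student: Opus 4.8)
The plan is to deduce the formula for $j_{k/2}$ formally from three facts already established: the cocycle description \eqref{equation:cocycleOfHalf-Forms} of the bundle of half-forms $\un{\omega}^{1/2}_{\Theta}$ over $\sX^{+,\mathrm{an}}_{1,1}=[\Gamma(1,2)\backslash\mathfrak{h}]$; the fact, recorded just before the statement, that the holomorphic lift of $\Des$ to the universal cover is $\Des_{\mathfrak{h}}\colon\mathfrak{h}\to\mathfrak{h}$, $\tau\mapsto 2\tau$; and the description of $\Des$ on fundamental groups from Lemma \ref{lemma:DesMap} with $m=2$, namely $\Des_{\pi_1}\colon\Gamma_0(4)\to\Gamma(1,2)$, $\smalltwobytwo{a}{b}{c}{d}\mapsto\smalltwobytwo{a}{2b}{c/2}{d}$. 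Recall that $\Des_{\mathfrak{h}}$ is equivariant: $\Des_{\mathfrak{h}}(\gamma\tau)=\Des_{\pi_1}(\gamma)\,\Des_{\mathfrak{h}}(\tau)$.

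First I would recall the behaviour of line bundles under pullback along a morphism of analytic quotient stacks. If $[\Gamma_0(4)\backslash\mathfrak{h}]\to[\Gamma(1,2)\backslash\mathfrak{h}]$ is presented by the equivariant pair $(\Des_{\pi_1},\Des_{\mathfrak{h}})$, and a line bundle on the target is described, with respect to a trivialization over $\mathfrak{h}$, by a cocycle $j\in Z^1(\Gamma(1,2),\cO_{\mathfrak{h}}^{\times})$, then the pullback line bundle, equipped with the pulled-back trivialization, is described by the cocycle $\gamma\mapsto\bigl(\tau\mapsto j(\Des_{\pi_1}(\gamma))(\Des_{\mathfrak{h}}(\tau))\bigr)$ (a routine check, using the equivariance of $\Des_{\mathfrak{h}}$, shows this is a cocycle in the automorphy-factor convention used throughout). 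This is precisely the sense of "the same trivializations over $\mathfrak{h}$" in the statement. Taking $j=j(\un{\omega}^{k/2}_{\Theta})$, which by \eqref{equation:cocycleOfHalf-Forms} equals $\gamma'\mapsto\bigl(\vartheta(\gamma'\tau')\vartheta(\tau')^{-1}\bigr)^{k}$, and substituting $\Des_{\mathfrak{h}}(\tau)=2\tau$, I obtain that $\cL^{k/2}_{1,(2)}=\Des^{*}\un{\omega}^{k/2}_{\Theta}$ is described by
$$
\gamma\longmapsto\Bigl(\vartheta\bigl(\Des_{\pi_1}(\gamma)\cdot 2\tau\bigr)\,\vartheta(2\tau)^{-1}\Bigr)^{k}.
$$

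It then remains to verify the Möbius identity $\Des_{\pi_1}(\gamma)\cdot(2\tau)=2(\gamma\tau)$, which (being equivalent to the equivariance of $\Des_{\mathfrak{h}}$) is a one-line computation: for $\gamma=\smalltwobytwo{a}{b}{c}{d}\in\Gamma_0(4)$ we have $4\mid c$ and $\Des_{\pi_1}(\gamma)=\smalltwobytwo{a}{2b}{c/2}{d}$, so $\Des_{\pi_1}(\gamma)\cdot(2\tau)=\frac{2a\tau+2b}{(c/2)(2\tau)+d}=\frac{2(a\tau+b)}{c\tau+d}=2(\gamma\tau)$. Since $\vartheta(2w)=\sum_{n}e^{2\pi i n^{2}w}=\vartheta_2(w)$, we get $\vartheta(\Des_{\pi_1}(\gamma)\cdot 2\tau)=\vartheta(2\gamma\tau)=\vartheta_2(\gamma\tau)$ and $\vartheta(2\tau)=\vartheta_2(\tau)$, so the cocycle above is exactly $j_{k/2}\colon\gamma\mapsto\bigl(\vartheta_2(\gamma\tau)\vartheta_2(\tau)^{-1}\bigr)^{k}$, as claimed; and since $j_{1/2}$ is Shimura's automorphy factor $\theta(\gamma\tau)/\theta(\tau)$ with $\theta=\vartheta_2$ on $\Gamma_0(4)$, this identifies the sections of $\cL^{k/2}_{1,(2)}$ with Shimura's modular forms of weight $k/2$. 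As every ingredient is already in place, there is no serious obstacle; the only point deserving attention is the bookkeeping of trivializations — one must ensure that the trivialization of $\cL^{k/2}_{1,(2)}$ implicitly used is the $\Des$-pullback of the trivialization of $\un{\omega}^{k/2}_{\Theta}$ assembled from $\vartheta$, $dz^{1/2}$ and $c^{1/2}$, so that the computed cocycle is literally $\bigl(\vartheta_2(\gamma\tau)/\vartheta_2(\tau)\bigr)^{k}$ rather than merely a cohomologous cocycle, which is exactly what the pullback recipe above guarantees.
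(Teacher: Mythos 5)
Your proposal is correct and follows essentially the same route as the paper's own proof: pull back the cocycle \eqref{equation:cocycleOfHalf-Forms} along the pair $(\Des_{\pi_1},\Des_{\mathfrak{h}})$ from Lemma \ref{lemma:DesMap} and Prop. \ref{proposition:V-holo}, and simplify using $\Des_{\pi_1}(\gamma)\cdot(2\tau)=2(\gamma\tau)$. You merely make explicit two steps the paper leaves implicit (the general pullback recipe for cocycles under a morphism of quotient stacks, and the M\"obius identity), which is harmless.
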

\begin{proof}
By Lemma \ref{lemma:DesMap} we know that $\Des$ acts on fundamental groups by 
$$
\smalltwobytwo{a}{b}{c}{d}\in \Gamma_0(4) \stackrel{\Des_{\pi_1}}\longrightarrow \smalltwobytwo{a}{2b}{c/2}{d} \in \Gamma(1,2),
$$
and by the above discussion we know that it acts on the universal covering space $\mathfrak{h}$ by $\tau \mapsto 2\tau$. Combined with the explicit formula for the 1-cocycle of the half-forms given by \eqref{equation:cocycleOfHalf-Forms}, we get
$$
j(\cL_{1,(2)}^{1/2})(\gamma)(\tau) = j(\un{\omega}^{1/2}_{\Theta})(\Des_{\pi_1}\gamma)(2\tau) = \vartheta(\Des_{\pi_1}\gamma(2\tau))\vartheta(2\tau)^{-1} = \vartheta(2\gamma \tau)\vartheta(2\tau)^{-1},
$$
as required. 
\end{proof}

It follows that the analytic global sections of $\cL_{1,(2)}^{k/2}$ can be identified with holomorphic functions $f:\mathfrak{h}\rightarrow \CC$ such that
$$
f(\gamma\tau) = j_{k/2}(\gamma) f(\tau),\quad \gamma \in \Gamma_0(4),
$$
which is precisely the definition of \cite{Shimura} (modulo growth conditions at the cusps). 

\begin{remark}
The same arguments as in Thm. \ref{theorem:AnalyticShimuraModularFormsCoincide} show that the analytic sections of $\cL_{1,(m)}^{k/2}$, for $m \in 2\ZZ_{>0}$, i.e. modular forms of half-integral weight of degree 1, type $(m)$ and weight $k/2$, transform according to the $k$-th power of the $\Gamma_0(2m)$-cocycle given by the level $m$ theta constant $\vartheta_m(\tau) = \sum_{n\in\ZZ}e^{m\pi i n^2 \tau} = \vartheta(m\tau)$. Note that these are different from Shimura modular forms of level $2m$, which are obtain by restricting $j_{k/2}$ to $\Gamma_0(2m) \subseteq \Gamma_0(4)$. 
\end{remark}

Alternatively, the cocycle (i.e. automorphy factor) of modular forms of half-integral weight can also be computed using the character $\widetilde{\lambda}$ of Prop. \ref{proposition:tildeCharacter}, and it is equal to
$$
j_{k/2}(\gamma)(\tau) = \widetilde{\lambda}^{-k}\,j(\un{\omega}^{k/2})(\Des_{\pi_1}\gamma, 2\tau).
$$

\begin{remark}
It would be interesting to work out the analytic theory of general modular forms of half-integral weight, degree $g$ and type $\delta$ (Def. \ref{definition:GeneralModForms}), whose explicit definition as analytic functions can be easily worked out by generalizing the arguments above. 
\end{remark}

\section{Weil Bundles}
\label{section:WeilBundles}

\subsection{}  Let $G=G(\delta)$ be the Heisenberg group scheme of type $\delta = (d_1,\ldots, d_g), d = d_1\cdots d_g$, over a scheme $S$ containing $1/d$. Let $\cV$ be a {\em representation} of $G(\delta)$, i.e. a locally free sheaf of finite rank over $S$ together with a homomorphism $\rho: G\rightarrow \Aut_{\cO_S}(\cV)$. We then have an action of $\GG_m \subseteq G$ on $\cV$, and thus a decomposition $\cV \simeq \oplus_{i\in \ZZ} \cV^{(i)}$, where $\GG_m$ acts via $\lambda \mapsto \lambda^i$ on each component $\cV^{(i)}$. We say that $\cV$ has {\em weight $i$} whenever $\cV = \cV^{(i)}$. By the following `Stone-Von Neumann'-type Theorem (\cite{MoretBailly:PinceauxAbv}, Thm. V.2.4.2) we have

\begin{theorem}
\label{theorem:SVN}
Let $\cV,\cV'$ be representations of $G$ of weight one, locally free of rank $d$ as $\cO_S$-modules. Then the $G$-equivariant  morphism 
$$
\cV\otimes_{\cO_S}Hom_{G}(\cV,\cV')\stackrel{\simeq}\longrightarrow \cV',\quad v\otimes u \longmapsto u(v) 
$$
where $Hom_{G}(\cV,\cV')$ is given the trivial action, is an isomorphism.  
\end{theorem}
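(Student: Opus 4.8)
The plan is to recognize weight-one representations of $G(\delta)$ as modules over an Azumaya algebra and then read the statement off from Morita theory. First I would set up the dictionary. Since $(\lambda,z)=(\lambda,0)\cdot(1,z)$ and $\GG_m$ acts by scalars on a weight-one representation $\cV$, giving $\cV$ is the same as giving a module over the Heisenberg algebra $A_\delta$, the $\cO_S$-algebra which is free on symbols $\{\epsilon_z\}_{z\in K(\delta)}$ with $\epsilon_{z_1}\epsilon_{z_2}=\ppair{x_1}{y_2}_\delta\,\epsilon_{z_1+z_2}$ (writing $z_i=(x_i,y_i)$) — this being exactly the relation satisfied by $z\mapsto\rho(1,z)$. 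Under this dictionary $\Hom_G(\cV,\cV')$ becomes $\Hom_{A_\delta}(\cV,\cV')$, so the theorem reduces to the analogous statement for $A_\delta$-modules that are locally free of rank $d$ over $\cO_S$.

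The substantive step I would do next is to show that the standard Schr\"odinger module $\cV_0:=\cO_S[H(\delta)]$ — with $H(\delta)\times\{0\}$ acting by translation and $\{0\}\times H(\delta)$ acting through the characters $\ppair{}{}_\delta$ — is locally free of rank $d$ and that the structure map $A_\delta\to\End_{\cO_S}(\cV_0)$ is an isomorphism. Both sides are locally free of rank $d^2$ over $\cO_S$, so a morphism between them that is bijective on every fibre is an isomorphism; and over $\Spec(k)\to S$ the image of $\zeta_d$ is a primitive $d$-th root of unity (as $d\in\cO_S^\times$), so $\cV_0\otimes k$ is the classical Schr\"odinger representation of the finite Heisenberg group $G(\delta)(k)$, which by Stone--Von Neumann is absolutely irreducible of $k$-dimension $d$; hence $A_\delta\otimes k\to\End_k(\cV_0\otimes k)$ is surjective, and bijective by the equality of dimensions. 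Consequently $A_\delta$ is Azumaya of rank $d^2$ with progenerator $\cV_0$, hence Morita equivalent to $\cO_S$, and every $A_\delta$-module locally free of rank $d$ over $\cO_S$ corresponds under Morita to an invertible $\cO_S$-module and is thus \'etale-locally isomorphic to $\cV_0$.

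To conclude, I would transport everything across the Morita equivalence $M\mapsto\Hom_{A_\delta}(\cV_0,M)$: if $\cV,\cV'$ correspond to invertible sheaves $\cN,\cN'$, then $\Hom_G(\cV,\cV')=\Hom_{\cO_S}(\cN,\cN')=\cN^{-1}\otimes\cN'$, and the evaluation map is the canonical isomorphism $\cN\otimes_{\cO_S}(\cN^{-1}\otimes\cN')\stackrel{\simeq}{\to}\cN'$. Equivalently, and more concretely, the map in question is $\cO_S$-linear between locally free sheaves, so it suffices to prove it is an isomorphism \'etale-locally on $S$, where one may take $\cV=\cV'=\cV_0$; there $\Hom_G(\cV_0,\cV_0)=\cO_S\cdot\id$ and the evaluation map is simply $v\otimes c\,\id\mapsto cv$. (One could also just invoke \cite{MoretBailly:PinceauxAbv}, Thm. V.2.4.2.)

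The main obstacle is the middle step: proving that the Schr\"odinger module induces the isomorphism $A_\delta\stackrel{\simeq}{\to}\End_{\cO_S}(\cV_0)$, which is the genuine content of Stone--Von Neumann. Everything else is Morita formalism and \'etale descent; what makes the middle step go through is precisely that $1/d$ and $\zeta_d$ lie in $\cO_S$, so that the classical fibrewise irreducibility is available, together with the locally-free-rank-$d^2$ Nakayama argument that spreads the fibrewise statement over the base. If $\zeta_d$ were not globally available one would first adjoin it on an \'etale cover, which is harmless here since $d$ is invertible and the final assertion is \'etale-local anyway.
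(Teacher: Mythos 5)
The paper does not actually prove this theorem: it is quoted from Moret-Bailly (\emph{Pinceaux de vari\'et\'es ab\'eliennes}, Thm.\ V.2.4.2), so there is no in-paper argument to compare against. Your proof is correct and is essentially the standard argument underlying that reference: the weight-one condition identifies representations with modules over the twisted group algebra of $\un{K}(\delta)$; fibrewise Stone--von Neumann plus the Nakayama/equal-rank argument shows this algebra maps isomorphically onto $\End_{\cO_S}(\cV_0)$ for the Schr\"odinger module $\cV_0$, hence is Azumaya with $\cV_0$ as progenerator; and Morita equivalence reduces the evaluation map to the canonical isomorphism $\cN\otimes_{\cO_S}(\cN^{-1}\otimes\cN')\simeq\cN'$ for invertible sheaves. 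The hypotheses $1/d,\zeta_d\in\cO_S$ enter exactly where you say they do (nondegeneracy of the pairing on each geometric fibre), and your closing remark that the whole statement is \'etale-local disposes of any residual issue about where $\zeta_d$ lives.
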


Suppose now that $(H,\sigma)$ is a splitting of $G$ over $H\subseteq K=K(\delta)$, and let $\overline{G} = p^{-1}H^{\perp}/\sigma H$. By \cite{MoretBailly:PinceauxAbv}, Thm. V.3.2 (i), the functor
\begin{align*}
\mathrm{Rep}^{(1)}(G) &\longrightarrow \mathrm{Rep}^{(1)}(\overline{G})\\
\cV &\longmapsto \cV^{\sigma H}
\end{align*}
is an equivalence of categories between weight one representations of $G$ and $\overline{G}$. For any two $\cV,\cV'$ as in Thm. \ref{theorem:SVN} above, we then have an isomorphism 
$$
Hom_{G}(\cV,\cV') \simeq Hom_{\overline{G}}(\cV^{\sigma H},\cV'^{\sigma H}).
$$
If $(H,\sigma)$ is maximal, then $\overline{G} = \GG_m$ and $ \cV^{\sigma H},\cV'^{\sigma H}$ are invertible $\cO_S$-modules (\cite{MoretBailly:PinceauxAbv} proof of Thm. V.3.2 (i)), so that 
\begin{equation}
\label{equation:invertibleH-Invariants}
Hom_{G}(\cV,\cV') = Hom_{\cO_S}(\cV^{\sigma H},\cV'^{\sigma H}) = (\cV^{\sigma H})^{-1}\otimes\cV'^{\sigma H}.
\end{equation}
Next, we construct an explicit weight one representation $\cV(\delta)$ of $G = G(\delta)$. As a sheaf, we let $\cV(\delta)$ be the free $\cO_S$-module of functions $f: H(\delta) \rightarrow \cO_S$. This free module has a canonical basis of delta functions $\{\delta_{\nu}\}_{\nu \in H(\delta)}$, characterized by $\delta_{\nu}(\mu) = \delta_{\nu\mu}$.

\begin{definition}
\label{definition:SchrodingerRep}
The {\em Schr\"{o}dinger representation} of $G=G(\delta)$ is the $\cO_S$-module $\cV(\delta)$ of functions $f: H(\delta) \rightarrow \cO_S$, with $G$-action given by
$$
\rho(\lambda,x,y)\, f(y') = \lambda \langle x,y' \rangle_{\delta} f(y'+y).
$$
\end{definition}
Now since $H(\delta)$ is of rank $d$, the representation $\cV(\delta)$ is also of rank $d$ and it is of weight one by definition. Thus $\cV(\delta)$ satisfies the hypothesis of Thm. \ref{theorem:SVN}, and we may deduce the following special case of Thm. \ref{theorem:SVN} (cp. \cite{MoretBailly:PinceauxAbv}, Thm. V.3.4)

\begin{theorem}
\label{theorem:SVNWithMaximalSplitting}
Let $(\un{H}(\delta),\sigma_{\rm{can}})$ be the canonical maximal splitting of $G(\delta)$, defined by \eqref{equation:canonicalSplitting}. Let $\cV$ be a representation of $G(\delta)$ of weight one, locally free of rank $d$ as a $\cO_S$-module. Then there exists a canonical $G(\delta)$-equivariant  isomorphism
$$
\cV(\delta)\otimes_{\cO_S}\cV^{\sigma_{\rm{can}} \un{H}(\delta)}\stackrel{\simeq}\longrightarrow \cV, 
$$
where the action on $\cV^{\sigma_{\rm{can}} \un{H}(\delta)}$ is trivial.
\end{theorem}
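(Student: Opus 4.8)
The plan is to realise this statement as the special case of Theorem~\ref{theorem:SVN} obtained by taking the second weight one representation to be $\cV$ itself and the first to be the Schr\"{o}dinger representation $\cV(\delta)$, and then to trivialise the resulting multiplicity sheaf $Hom_{G(\delta)}(\cV(\delta),\cV)$ canonically by identifying it with $\cV^{\sigma_{\rm{can}}\un{H}(\delta)}$.

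First I would record that $\cV(\delta)$ is a weight one representation of $G(\delta)$ that is free of rank $d=\#H(\delta)$ over $\cO_S$, so the pair $(\cV(\delta),\cV)$ satisfies the hypotheses of Theorem~\ref{theorem:SVN}. That theorem then produces a canonical $G(\delta)$-equivariant isomorphism
$$
\cV(\delta)\otimes_{\cO_S}Hom_{G(\delta)}(\cV(\delta),\cV)\stackrel{\simeq}\longrightarrow \cV,\qquad v\otimes u\longmapsto u(v),
$$
with trivial $G(\delta)$-action on $Hom_{G(\delta)}(\cV(\delta),\cV)$. It therefore suffices to trivialise this $Hom$-sheaf canonically.

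The second, and only substantive, step is the computation of $\cV(\delta)^{\sigma_{\rm{can}}\un{H}(\delta)}$. Since $(\un{H}(\delta),\sigma_{\rm{can}})$ is a \emph{maximal} splitting, formula~\eqref{equation:invertibleH-Invariants} — equivalently, the fact that $\cW\mapsto\cW^{\sigma_{\rm{can}}\un{H}(\delta)}$ is an equivalence onto $\mathrm{Rep}^{(1)}(\GG_m)$ — gives
$$
Hom_{G(\delta)}(\cV(\delta),\cV)=Hom_{\cO_S}\!\left(\cV(\delta)^{\sigma_{\rm{can}}\un{H}(\delta)},\ \cV^{\sigma_{\rm{can}}\un{H}(\delta)}\right).
$$
By~\eqref{equation:canonicalSplitting} the element $\sigma_{\rm{can}}(h)=(1,h,0)$ acts on $f\in\cV(\delta)$ by $f(y')\mapsto\langle h,y'\rangle_{\delta}\,f(y')$, so a $\sigma_{\rm{can}}\un{H}(\delta)$-invariant function is supported on the set of $y'\in H(\delta)$ with $\langle h,y'\rangle_{\delta}=1$ for all $h\in H(\delta)$. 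Since $\ppair{}{}_{\delta}$ is non-degenerate and the summands $\un{H}(\delta)\times\{0\}$ and $\{0\}\times\un{H}(\delta)$ of $\un{K}(\delta)$ are maximal isotropic, $\ppair{}{}_{\delta}$ restricts to a perfect pairing between them, so this set is $\{0\}$. Hence $\cV(\delta)^{\sigma_{\rm{can}}\un{H}(\delta)}=\cO_S\cdot\delta_{0}$ is free of rank one with the \emph{canonical} generator $\delta_{0}$ (the delta function at the identity of $H(\delta)$), and evaluation at $\delta_{0}$ identifies $Hom_{G(\delta)}(\cV(\delta),\cV)$ with $\cV^{\sigma_{\rm{can}}\un{H}(\delta)}$.

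Finally I would substitute this identification into the isomorphism of the first step, obtaining the asserted $\cV(\delta)\otimes_{\cO_S}\cV^{\sigma_{\rm{can}}\un{H}(\delta)}\stackrel{\simeq}\longrightarrow\cV$; explicitly it sends $v\otimes w$ to $u_w(v)$, where $u_w$ is the unique $G(\delta)$-morphism with $u_w(\delta_{0})=w$. Its $G(\delta)$-equivariance, with trivial action on the invariant tensor factor, is inherited verbatim from Theorem~\ref{theorem:SVN}; the only bookkeeping point is that the residual weight one $\GG_m$-action on $\delta_{0}$ cancels the dual weight in $Hom_{\cO_S}(\cO_S\delta_{0},-)$, so that the weight conventions in the statement are consistent. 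I expect no genuine obstacle: the argument is entirely the unwinding of Theorem~\ref{theorem:SVN} together with formula~\eqref{equation:invertibleH-Invariants}, and the only step requiring care is verifying that $\delta_{0}$ is a \emph{canonical} generator of the Schr\"{o}dinger invariants, so that the resulting isomorphism is truly canonical rather than merely unique up to an invertible scalar.
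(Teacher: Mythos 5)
Your proposal is correct and follows exactly the paper's own (very terse) proof: apply Theorem~\ref{theorem:SVN} to the pair $(\cV(\delta),\cV)$, use \eqref{equation:invertibleH-Invariants} to rewrite the multiplicity sheaf, and trivialise $\cV(\delta)^{\sigma_{\rm{can}}\un{H}(\delta)}$ canonically by $\delta_0$. The only difference is that you spell out the support computation showing $\cV(\delta)^{\sigma_{\rm{can}}\un{H}(\delta)}=\cO_S\cdot\delta_0$, which the paper simply asserts.
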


\begin{proof}
This follows from  Thm. \ref{theorem:SVN} and \eqref{equation:invertibleH-Invariants}, and by application of the canonical trivialization $\cV(\delta)^{\sigma_{\rm{can}} H(\delta)} \simeq \cO_S\cdot \delta_0$. 
\end{proof}

\subsection{} 
\label{section:idealTheoremSection}
Suppose now $2\mid d_i$, $i=1,\ldots, g$, and let $(\pi:A\rightarrow S,\cL)$ be an abelian scheme together with a totally symmetric, normalized, relatively ample invertible sheaf $\cL$ of type $\delta$. The pushforward $\pi_*\cL$ is locally free of rank $d$, of formation compatible under base-change, and it is a weight one representation of the theta group $G(\cL)$ (\cite{Mumford:EqAbv2}, \S 6). Passing to the universal case, let $\cJ_{g,\delta}$ be the sheaf on $\sA_{g,\delta}$ given by the functor
\begin{equation}
\label{equation:JacobiFormsSheaf}
\cJ_{g,\delta}: (\pi:A\rightarrow S,\cL) \longmapsto \pi_*\cL,
\end{equation}
a weight one representation of the theta group of the universal abelian scheme (stack) over $\sA_{g,\delta}$. Next, consider the algebraic stack  $\sA_{g,\delta}(\Theta,\delta)$ classifying pairs $(A,\cL)$ as above together with a symmetric theta structure $\beta: G(\cL) \stackrel{\simeq}\rightarrow G(\delta)$. By a slight abuse of notation, we let
$$
\Des: \sA_{g,\delta}(\Theta,\delta) \longrightarrow \sX_{g,1}
$$
be the descent map \eqref{equation:DescentMap}, which really factors as $\sA_{g,\delta}(\Theta,\delta)\rightarrow \sA_{g,\delta}(\Theta) \stackrel{\Des}\rightarrow \sX_{g,1}$. Also by abusing notation, denote again by $\cJ_{g,\delta}$ the pull-back of \eqref{equation:JacobiFormsSheaf}  to $\sA_{g,\delta}(\Theta,\delta)$. Over this stack, we may also form the Schr\"{o}dinger representation $\cV(\delta)$ and compare it to $\cJ_{g,\delta}$, as in Thm. \ref{theorem:SVNWithMaximalSplitting} above:

\begin{theorem}
\label{theorem:IdealTheorem}
There is an isomorphism
$$
\cV(\delta)\otimes \Des^*\un{\omega}^{-1/2}_{\Theta} \stackrel{\simeq}\longrightarrow  \cJ_{g,\delta},
$$
as locally free sheaves of rank $d$ over $\sA_{g,\delta}(\Theta,\delta)$,
canonically defined up to $\pm 1$.
\end{theorem}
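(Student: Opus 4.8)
The plan is to obtain the isomorphism by applying the Stone--Von Neumann theorem with maximal splitting (Thm.~\ref{theorem:SVNWithMaximalSplitting}) to the representation $\cV = \cJ_{g,\delta} = \pi_*\cL$, and then to identify the resulting line of invariants with $\Des^*\un{\omega}^{-1/2}_{\Theta}$. Thus the whole argument splits into two independent pieces: a Heisenberg-theoretic reduction that is essentially quoted, and a geometric identification of an invertible sheaf that is where the work lies.

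First I would pull $\cJ_{g,\delta} = \pi_*\cL$ back to $\sA_{g,\delta}(\Theta,\delta)$ and transport it through the universal symmetric theta structure $\beta : G(\cL)\stackrel{\simeq}\to G(\delta)$, so that it becomes a weight one representation of $G(\delta)$, locally free of rank $d$ as an $\cO$-module (the relevant properties of $\pi_*\cL$ being recalled from \cite{Mumford:EqAbv2}, \S6). Theorem~\ref{theorem:SVNWithMaximalSplitting}, applied with $\cV = \pi_*\cL$, then furnishes a canonical $G(\delta)$-equivariant isomorphism
$$
\cV(\delta)\otimes (\pi_*\cL)^{\sigma_{\rm{can}}\un{H}(\delta)}\stackrel{\simeq}\longrightarrow \pi_*\cL = \cJ_{g,\delta},
$$
in which $(\pi_*\cL)^{\sigma_{\rm{can}}\un{H}(\delta)}$ is an invertible sheaf over $\sA_{g,\delta}(\Theta,\delta)$ (it is the $\sigma_{\rm{can}}\un{H}(\delta)$-invariants of a weight one representation attached to a maximal splitting, hence invertible by \eqref{equation:invertibleH-Invariants}). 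It remains only to identify this line, canonically up to sign, with $\Des^*\un{\omega}^{-1/2}_{\Theta}$.

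For this I would unwind the descent map. By construction $\Des$ sends $(A,\cL,\beta)$ to $(B:=A/H,\Theta)$, where $H = \alpha^{-1}\un{H}(\delta)$ and $(H,\sigma):=(\alpha^{-1}\un{H}(\delta),\beta^{-1}\sigma_{\rm{can}}\alpha)$ is a maximal symmetric splitting of $G(\cL)$; in particular $\beta$ carries $\sigma H\subseteq G(\cL)$ onto $\sigma_{\rm{can}}\un{H}(\delta)\subseteq G(\delta)$, so that $(\pi_*\cL)^{\sigma_{\rm{can}}\un{H}(\delta)} = (\pi_*\cL)^{\sigma H}$ after transport through $\beta$. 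From the proof of Thm.~\ref{theorem:DescentForLineBundles}, $\Theta = (\phi_*\cL)^{\sigma H}$ with $\phi:A\to B$, and the $\sigma H$-action on $\phi_*\cL$ is $\cO_B$-linear because $\phi\circ T_h = \phi$ for $h\in H$. Writing $\pi = q\circ\phi$ with $q:B\to S$ and using that $q_*$ is left exact, hence commutes with the formation of (finite) invariants, I get
$$
\Des^*\cJ_{g,1} = q_*\Theta = q_*\big((\phi_*\cL)^{\sigma H}\big) = (q_*\phi_*\cL)^{\sigma H} = (\pi_*\cL)^{\sigma H},
$$
compatibly with the $G(\cL)$-action (this needs a short check matching the canonical isomorphisms coming from $\pi T_h = \pi$ and $\phi T_h = \phi$ via $q\phi = \pi$). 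Finally I would invoke the defining isomorphism $\un{\omega}^{1/2}_{\Theta}\simeq\cJ_{g,1}^{-1}$ of the bundle of half-forms (Def.~\ref{definition:half-forms}, which comes from the key formula and Thm.~\ref{theorem:MetaplecticCorrection} and is canonical up to $\pm1$); pulling it back along $\Des$ gives $\Des^*\un{\omega}^{-1/2}_{\Theta}\simeq\Des^*\cJ_{g,1} = (\pi_*\cL)^{\sigma_{\rm{can}}\un{H}(\delta)}$, and substituting into the first display yields the asserted isomorphism $\cV(\delta)\otimes\Des^*\un{\omega}^{-1/2}_{\Theta}\stackrel{\simeq}\to\cJ_{g,\delta}$, with the $\pm1$ ambiguity entirely inherited from the square root $\un{\omega}^{1/2}_{\Theta}\simeq\cJ_{g,1}^{-1}$.

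I expect the main obstacle to be the middle step: checking, functorially and over the whole stack $\sA_{g,\delta}(\Theta,\delta)$ rather than just locally on affine bases, that ``taking $\sigma H$-invariants of $\pi_*\cL$ as a representation of $G(\cL)$'' agrees on the nose with ``Mumford's descent $\Theta = (\phi_*\cL)^{\sigma H}$ followed by $q_*$''. This requires tracking the various canonical identifications (the translation-invariance isomorphisms for $\pi$ and for $\phi$, and the transport along $\beta$), and verifying insensitivity to the choice of local representative $\beta$ of the $\Theta$-descent structure, so that the construction descends from schemes to the stack. By contrast Thm.~\ref{theorem:SVNWithMaximalSplitting} and the existence of $\un{\omega}^{1/2}_{\Theta}$ together with $\un{\omega}^{1/2}_{\Theta}\simeq\cJ_{g,1}^{-1}$ are quoted directly, and the rank and weight bookkeeping ($\cV(\delta)$ of rank $d$, the invariant line of rank $1$, $\pi_*\cL$ of rank $d$) is immediate.
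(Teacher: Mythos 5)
Your proposal follows the paper's proof essentially verbatim: apply the Stone--Von Neumann theorem with the canonical maximal splitting to $\pi_*\cL$, identify the invariant line $(\pi_*\cL)^{\sigma H}$ with $\Des^*\cJ_{g,1}$ via the descent construction of Thm.~\ref{theorem:DescentForLineBundles}, and conclude with the isomorphism $\un{\omega}_{\Theta}^{-1/2}\simeq\cJ_{g,1}$ (canonical up to $\pm1$) from the key formula of \cite{C1}. The only difference is that you spell out the identification $q_*\big((\phi_*\cL)^{\sigma H}\big)=(\pi_*\cL)^{\sigma H}$ more explicitly than the paper, which simply cites the proof of Thm.~\ref{theorem:DescentForLineBundles}; this is a matter of exposition, not of substance.
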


\begin{proof}
First, note that for any triple $(A,\cL,\beta)$, we have a canonical maximal splitting $(H,\sigma)$ of $G(\cL)$ over $H:= \alpha^{-1}\un{H}(\delta)$ given by pulling back $\sigma_{\rm{can}}$ via $\beta$ (this is how the forgetful map $\sA_{g,\delta}(\Theta,\delta)\rightarrow \sA_{g,\delta}(\Theta)$ is defined). By the proof of Thm. \ref{theorem:DescentForLineBundles}, the sheaf $\Des^*\cJ_{g,1}$  is then canonically isomorphic to the invertible sheaf over $\sA_{g,\delta}(\Theta,\delta)$ given by the functor
$$
(\pi:A\rightarrow S,\cL,\beta) \longmapsto (\pi_*\cL)^{\sigma  H}.
$$
By Thm. \ref{theorem:SVNWithMaximalSplitting} (extended to the case where the base is an algebraic stack), we have a canonical isomorphism 
$$
\cV(\delta)\otimes \Des^*\cJ_{g,1} \stackrel{\simeq}\longrightarrow  \cJ_{g,\delta},
$$
as $G(\delta)$-representations, where $\Des^*\cJ_{g,1}$ is given the trivial action. By Thm. 5.1 of \cite{C1}, there is also an isomorphism $\omega_{\Theta}^{-1/2} \simeq \cJ_{g,1}$ of invertible sheaves over $\sX_{g,1}$, canonically defined up to $\pm 1$. The result follows after pulling back this isomorphism by $\Des$.  
\end{proof}

\begin{remark}
In \cite{Mumford:EqAbv2}, \S 6 (bottom of p. 81), Mumford shows that there is an isomorphism $\cJ_{g,\delta}\simeq \cV(\delta)\otimes \cK$ as locally free sheaves over $\sA_{g,\delta}(\Theta,\delta)$, for some invertible sheaf $\cK$. Thm. \ref{theorem:IdealTheorem} above can thus be viewed as a refinement of this result of Mumford, where we have identified $\cK$ explicitly and made the isomorphism almost canonical.
\end{remark}

\subsection{} Let now $\sA_{g,\delta,1/2}(\Theta,\delta)$ be the metaplectic stack over $\sA_{g,\delta}(\Theta,\delta)$. As in Cor. \ref{corollary:metaplecticCharacter}, over this stack the bundle of half-forms factors 
$$
\Des^*\un{\omega}^{1/2}_{\Theta} \simeq \Des^*\cM_{\Theta}^{1/2}\otimes\un{\omega}^{1/2}
$$ 
into the product of the square-root of the theta multiplier bundle and the square-root of the Hodge bundle. Consequently, the isomorphism of Thm. \ref{theorem:IdealTheorem} can be rewritten over the metaplectic stack as
\begin{equation}
\label{equation:algebraicEichlerZagier}
\cV(\delta)\otimes \Des^*\cM_{\Theta}^{-1/2} \stackrel{\simeq}\longrightarrow  \cJ_{\delta,g}\otimes\un{\omega}^{1/2}.
\end{equation}
Now the right-hand side of this equation is a base-change from the metaplectic stack $\sA_{g,\delta,1/2}$, thus the vector bundle $\cV(\delta)\otimes \Des^*\cM_{\Theta}^{-1/2}$ also descends to $\sA_{g,\delta,1/2}$, even though neither one of its factors $\cV(\delta)$ or $\Des^*\cM_{\Theta}^{-1/2}$ does, in general.

\begin{definition}
The {\em Weil bundle} $\cW_{g,\delta}$ is the locally free sheaf of rank $d$ over the metaplectic stack $\sA_{g,\delta,1/2}$ given by
$$
\cW_{g,\delta}:= \cV(\delta)^{\vee}\otimes \Des^*\cM_{\Theta}^{1/2}.
$$ 
\end{definition}

\begin{remark}
Re-writing the isomorphism of Thm. \ref{theorem:IdealTheorem} as \eqref{equation:algebraicEichlerZagier} is helpful in studying the structure of $\cJ_{g,\delta}$. For example, note that $\cW_{g,\delta}^{\vee}$ trivializes over a finite \'{e}tale cover of $\sA_{g,\delta,1/2}$ (i.e. it is a local system over the \'{e}tale site) so that we may deduce that $\cJ_{g,\delta}$ is a {\em projective} local system, i.e. it trivializes over a finite \'{e}tale cover up to multiplication by an invertible sheaf, in this case $\un{\omega}^{1/2}$.  
\end{remark}

The theory of Weil bundles can also be employed to give  an algebraic analog of the notion of {\em vector-valued} modular forms with values in the Weil representation (\cite{EichlerZagier}, \cite{Borcherds}):

\begin{definition}
\label{definition:vvaluedModularForms}
Let $k\in \ZZ$. A {\em $\cW_{g,\delta}$-valued modular form} of weight $k/2$ is a global section of the vector bundle 
$$
\cW_{g,\delta,k/2}:= \cW_{g,\delta}\otimes\un{\omega}^{k/2}
$$
over the metaplectic stack $\sA_{g,\delta,1/2}$.
\end{definition}

The prototypical example of a $\cW_{g,\delta}$-valued modular form is the {\em vector of theta constants}. For any pair $(A\rightarrow S,\cL)$ as in Sec. \ref{section:idealTheoremSection}, this is defined as follows (\cite{FaltingsChai}, Rmk. 5.2). Let $\mathrm{ev}_e\in \Hom(\cL, e_*e^*\cL)$ be the canonical `evaluation-at-$e$' map, and apply $\pi_*$ to get a map in $\Hom(\pi_*\cL,e^*\cL)$. Composed with the normalizing isomorphism $e^*\cL \simeq \cO_S$, we get a canonical element 
$$
\theta_{\rm{null}}(A,\cL) \in \Hom(\pi_*\cL,\cO_S) = H^0(S, (\pi_*\cL)^{\vee}),
$$
the vector of theta constants. Over the moduli stack $\sA_{g,\delta}$, this construction defines an element
$$
\theta_{\mathrm{null},g,\delta} \in  \Hom(\cJ_{g,\delta}, \cO_{\sA_{g,\delta}}) = H^0(\sA_{g,\delta},\cJ_{g,\delta}^{\vee}),
$$
which, via \eqref{equation:algebraicEichlerZagier}, is a $\cW_{g,\delta}$-valued modular form of weight 1/2. This examples is the reason why we choose the convention of the  `dual' superscript in Def. \ref{definition:vvaluedModularForms}.

\subsection{Analytic Theory} We now work in the analytic category and explain how the isomorphism of Thm. \ref{theorem:IdealTheorem} can be viewed as the algebraic analog of the transformation laws of the vector of theta constants. We work out the $g=1$ case, the higher degree cases being similar. Thus, as usual, let $\sA^{\rm{an}}_{1,1} = [\SL_2(\ZZ)\backslash \mathfrak{h}]$ be the analytic quotient stack classifying elliptic curves over an analytic space, let $\pi^{\rm{un}}:\sE(1)\rightarrow\sA^{\rm{an}}_{1,1}$ be the universal elliptic curve over it.  For any $m\in 2\ZZ_{>0}$, we have $\sA^{\rm{an}}_{1,(m)} = \sA^{\rm{an}}_{1,1}$ and the vector bundle $\cJ_{1,(m)}$ is just $\pi^{\rm{un}}_*\cL_m$, where 
$$
\mathcal{L}_m := \mathcal{O}_{\sE(1)}(m\,e)\otimes(\Omega^1_{\sE(1)/\sA^{\rm{an}}_{1,1}})^{\otimes m}.
$$
Let $\mathrm{pr}:\mathfrak{h}\rightarrow \sA^{\rm{an}}_{1,1} = [\SL_2(\ZZ)\backslash \mathfrak{h}]$ be the projection map, classifying the universal framed elliptic curve $\sE\rightarrow\mathfrak{h}$. The vector bundle $\pr^*\cJ_{1,(m)}^{\vee}$ admits a trivialization
$$
\theta_{\mathrm{null},(m)} = (\vartheta_{m,0}, \ldots, \vartheta_{m,m-1}) : \pr^*\cJ_{1,(m)}^{\vee} \stackrel{\simeq}\longrightarrow \cO^{\oplus m}_{\mathfrak{h}},
$$ 
where the vector $\theta_{\mathrm{null},(m)}$ has for components the {\em theta constants}
$$
\vartheta_{m,\nu}(\tau) = \sum_{\substack{r\in\mathbb{Z}\\ r\equiv \nu \mod m}}e^{2\pi i\,\tau\,\frac{r^2}{2m} },
$$
which are non-vanishing on $\mathfrak{h}$. On the other hand, the Weil bundle $\cW_{1,(m)}$, defined over the metaplectic stack $\sA^{\rm{an}}_{1,1/2} = [\Mp_2(\ZZ)\backslash \mathfrak{h}]$, admits a well-known trivialization $\pr^*\cW_{1,(m)}$, which gives the following formulas (e.g. \cite{Borcherds}, \S 4)
\begin{theorem}
\label{theorem:explicitWeilRep}
Let $
T = \left(\smalltwobytwo{1}{1}{0}{1},1\right), S = \left(\smalltwobytwo{0}{-1}{1}{0},\sqrt{\tau}\right),
$
be the standard generators for $\mathrm{Mp}_2(\mathbb{Z})$. Then $\pr^*\cW_{1,(m)}$ admits a trivialization such that the corresponding 1-cocycle in $Z^1(\Mp_2(\ZZ), \GL_m(\cO_{\mathfrak{h}}^{\times}))$ is the Weil representation  $\rho_m:\Mp_2(\ZZ) \rightarrow \GL(\CC[\ZZ/m\ZZ])$ given by  
\begin{equation*}
\begin{aligned}
\rho_m(T)(e_{\gamma}) &= e^{\pi i \gamma^2 /m}\, e_{\gamma} \\
\rho_m(S)(e_{\gamma}) &= \frac{\sqrt{i}}{\sqrt{m}}\sum_{\delta \in \mathbb{Z}/m\mathbb{Z}} e^{-2\pi i \gamma\delta /m}\,e_{\delta},
\end{aligned}
\end{equation*}
where $\{e_{\gamma}\}_{\gamma\in \ZZ/m\ZZ}$ is the standard basis for the group ring $\CC[\ZZ/m\ZZ]$.
\end{theorem}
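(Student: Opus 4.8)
The plan is to deduce the explicit formulas from the analytic incarnation of Thm.~\ref{theorem:IdealTheorem}, after which the classical transformation laws of theta constants reappear simply as the statement that the vector of theta constants is a section of $\cW_{1,(m)}\otimes\un{\omega}^{1/2}$.

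First I would record the isomorphism of vector bundles over the metaplectic stack $\sA^{\rm{an}}_{1,1/2}=[\Mp_2(\ZZ)\backslash\mathfrak{h}]$ that underlies everything. Dualizing \eqref{equation:algebraicEichlerZagier} in the case $g=1$, $\delta=(m)$ (and using $\sA^{\rm{an}}_{1,(m)}=\sA^{\rm{an}}_{1,1}$, so $\cJ_{1,(m)}=\pi^{\rm{un}}_*\cL_m$) gives a canonical-up-to-$\pm1$ isomorphism $\cW_{1,(m)}\simeq\cJ^{\vee}_{1,(m)}\otimes\un{\omega}^{-1/2}$. Pulling back to $\mathfrak{h}$ and tensoring the trivialization $\theta_{\mathrm{null},(m)}$ of $\pr^*\cJ^{\vee}_{1,(m)}$ with the trivialization $dz^{-1/2}$ of $\pr^*\un{\omega}^{-1/2}$ from \eqref{equation:half-trivialization} produces a trivialization of $\pr^*\cW_{1,(m)}$; this explicit choice of frame also removes the $\pm1$ indeterminacy of Thm.~\ref{theorem:IdealTheorem}. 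By construction the $1$-cocycle of $\cW_{1,(m)}$ in this frame is the product of the cocycle $j(\cJ^{\vee}_{1,(m)})$ — the matrix by which the vector $\theta_{\mathrm{null},(m)}$ transforms under $\SL_2(\ZZ)$ — with the cocycle $j(\un{\omega}^{-1/2})\colon(\gamma,\phi)\mapsto\phi^{-1}$ of Prop.~\ref{proposition:AnalyticMetaplecticThetaStack}. Equivalently, since the canonical section $\theta_{\mathrm{null},1,(m)}$ of $\cW_{1,(m)}\otimes\un{\omega}^{1/2}\simeq\cJ^{\vee}_{1,(m)}$ is precisely the vector of theta constants (the discussion preceding this subsection), this cocycle is exactly the classical automorphy factor of $\theta_{\mathrm{null},(m)}$ divided by $\phi=(c\tau+d)^{1/2}$.

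It then remains only to evaluate the cocycle on the generators $T$ and $S$ of $\Mp_2(\ZZ)$, which is a classical computation (cf.\ \cite{Borcherds}, \S 4, or Mumford's \emph{Tata Lectures on Theta}). For $T$ one has $\phi=1$, and a termwise manipulation of $\vartheta_{m,\nu}(\tau)=\sum_{r\equiv\nu\,(m)}e^{\pi i\tau r^2/m}$, using that $m$ is even, gives $\vartheta_{m,\nu}(\tau+1)=e^{\pi i\nu^2/m}\vartheta_{m,\nu}(\tau)$, i.e.\ the diagonal matrix $\rho_m(T)$. For $S$ one has $\phi=\sqrt{\tau}$, and Poisson summation — equivalently, the finite Fourier transform on $\ZZ/m\ZZ$, whose normalizing constant is a quadratic Gauss sum — yields $\vartheta_{m,\nu}(-1/\tau)=\sqrt{\tau}\,\tfrac{\sqrt{i}}{\sqrt{m}}\sum_{\mu\in\ZZ/m\ZZ}e^{-2\pi i\nu\mu/m}\,\vartheta_{m,\mu}(\tau)$; dividing by $\phi=\sqrt{\tau}$ produces $\rho_m(S)$. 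Finally, since $S$ and $T$ generate $\Mp_2(\ZZ)$ (note $S^4=-1$, so $-1\in\langle S\rangle$, while $S,T$ surject onto $\SL_2(\ZZ)$), and the cocycle of $\cW_{1,(m)}$ is a genuine $\GL_m(\cO^{\times}_{\mathfrak{h}})$-valued $1$-cocycle taking these constant values on $S$ and $T$, it must coincide with $\rho_m$ everywhere; in particular the displayed formulas do define a representation of $\Mp_2(\ZZ)$, as claimed.

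The step I expect to need real care is the $S$-transformation: isolating the eighth-root-of-unity constant $\sqrt{i}/\sqrt{m}$ is exactly the classically delicate ``metaplectic'' normalization in the theory of theta functions, and it requires a precise form of Poisson summation (a Gauss sum over $\ZZ/m\ZZ$). The remainder is bookkeeping — chiefly, matching the cocycle conventions for $\cJ^{\vee}_{1,(m)}$, for $\un{\omega}^{-1/2}$, and for their tensor product with the normalizations fixed in the preceding analytic subsections, and checking that the explicit frame above indeed pins down the sign left ambiguous by Thm.~\ref{theorem:IdealTheorem}.
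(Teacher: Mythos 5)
The paper does not actually prove Thm.~\ref{theorem:explicitWeilRep}: the trivialization in question is the ``well-known'' one attached to the Schr\"odinger model (the standard basis $\{e_\gamma\}$ of $\cV(\delta)^{\vee}=\CC[\ZZ/m\ZZ]$ tensored with the constant trivialization $c^{1/2}$ of $\cM_{\Theta}^{1/2}$), and the formulas for $\rho_m$ are simply quoted from \cite{Borcherds}, \S 4. Your argument takes a genuinely different route: you manufacture a trivialization of $\pr^*\cW_{1,(m)}$ out of $\theta_{\mathrm{null},(m)}$ and the algebraic isomorphism \eqref{equation:algebraicEichlerZagier}, and compute its cocycle from the classical transformation laws of the theta constants. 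This would prove the literal statement (``admits a trivialization such that\ldots''), but it undermines the role the theorem plays in the paper: the very next result asserts $\iota_{\rm{an}}=\pm\iota_{\rm{alg},\CC}$, where $\iota_{\rm{an}}$ is \emph{defined} as the composite of $\theta_{\mathrm{null},(m)}^{-1}$ with the trivialization of Thm.~\ref{theorem:explicitWeilRep}. If that trivialization is itself built from $\theta_{\mathrm{null},(m)}$ and $\iota_{\rm{alg}}$, the comparison theorem becomes a tautology. For the logic of the section, the $\cW$-trivialization must be the intrinsic one, constructed without reference to theta constants.

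There is also a concrete error at exactly the step you flagged as delicate. Poisson summation applied to $\vartheta_{m,\nu}(\tau)=\sum_{r\equiv\nu\,(m)}e^{\pi i\tau r^2/m}$ gives
$$
\vartheta_{m,\nu}(-1/\tau)=\sqrt{\tau}\,\frac{\sqrt{-i}}{\sqrt{m}}\sum_{\mu\in\ZZ/m\ZZ}e^{2\pi i\nu\mu/m}\,\vartheta_{m,\mu}(\tau),
$$
with eighth root of unity $\sqrt{-i}=e^{-i\pi/4}$, not $\sqrt{i}$ as you wrote. (Test $m=2$ at the fixed point $\tau=i$ of $S$: since $\vartheta_{2,0}(i)+\vartheta_{2,1}(i)=\vartheta(i/2)=\sqrt{2}\,\vartheta(2i)=\sqrt{2}\,\vartheta_{2,0}(i)$, the constant is forced to be $e^{-i\pi/4}$.) The sign in the exponent is immaterial because $\vartheta_{m,\mu}=\vartheta_{m,-\mu}$, but $\sqrt{-i}$ versus $\sqrt{i}$ is not: it is precisely the difference between $\rho_m$ and its complex conjugate (equivalently, its dual, as $\rho_m$ is unitary). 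So the automorphy factor of the coordinate vector $\theta_{\mathrm{null},(m)}$, divided by $\phi=\sqrt{\tau}$, is $\overline{\rho_m(S)}$ rather than $\rho_m(S)$; to arrive at $\rho_m$ as stated one must pass from the transformation of coordinates to the transformation of the corresponding frame of $\cW_{1,(m)}$ (an inverse transpose), a duality your write-up does not address. As written, your derivation produces the wrong eighth root of unity, which is the one piece of the computation that cannot be waved away as bookkeeping.
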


The composition of the trivialization $\theta_{\mathrm{null},(m)}^{-1}$ with that of Thm. \ref{theorem:explicitWeilRep} (tensored with \eqref{equation:half-trivialization}) gives an isomorphism 
$$
\iota_{\rm{an}}: \pr^*(\cW_{1,(m)}\otimes\un{\omega}^{1/2})\stackrel{\simeq}\longrightarrow \pr^*\cJ^{\vee}_{1,(m)}.
$$ 
By the classical transformation laws of theta functions this isomorphism is $\SL_2(\ZZ)$-invariant, so it descends to $\sA^{\rm{an}}_{1,1}$. On the other hand, we also have an isomorphism $\iota_{\mathrm{alg},\CC}$ between the same sheaves, obtained by dualizing Thm. \ref{theorem:IdealTheorem} and extending scalars to $\CC$. We then have: 

\begin{theorem}
$
\iota_{\rm{an}} = \pm \iota_{\rm{alg},\CC}
$
\end{theorem}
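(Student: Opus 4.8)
\emph{Overview.} The plan is to reduce the comparison to a scalar function on $\mathfrak h$ by exploiting that both isomorphisms are equivariant for the theta group action, apply Schur's lemma (in the Stone--von Neumann form of Thm.~\ref{theorem:SVN}) to conclude that they differ by a nowhere-vanishing holomorphic function, and then pin that function down to $\pm 1$ using descent to a modular curve and a cusp ($q$-expansion) computation. The first task is to rewrite both maps in the same shape. Dualizing $\iota_{\rm an}$ and $\iota_{\mathrm{alg},\CC}$ and using Cor.~\ref{corollary:metaplecticCharacter} to absorb the $\un{\omega}^{1/2}$ and $\cM_{\Theta}^{\pm 1/2}$ factors into $\Des^*\un{\omega}^{-1/2}_{\Theta}$, both become isomorphisms $\pr^*\cJ_{1,(m)}=\pr^*\pi^{\rm un}_*\cL_m \stackrel{\simeq}\longrightarrow \pr^*\bigl(\cV(\delta)\otimes\Des^*\un{\omega}^{-1/2}_{\Theta}\bigr)$, where $G(\delta)$ acts by the Schr\"odinger representation on $\cV(\delta)$ and trivially on $\Des^*\un{\omega}^{-1/2}_{\Theta}$.

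\emph{Step 1: $G(\delta)$-equivariance of both maps.} For $\iota_{\mathrm{alg},\CC}$ this is built in, since the isomorphism of Thm.~\ref{theorem:IdealTheorem} is, by construction, a morphism of $G(\delta)$-representations. For $\iota_{\rm an}$ one invokes the classical description: over $\mathfrak h$ the sheaf $\pi^{\rm un}_*\cL_m$ is realized, via the distinguished theta structure $\beta_0$, as the Schr\"odinger representation on the basis of partial theta functions $\vartheta_{m,\nu}(\tau,z)$, whose restrictions to $z=0$ are precisely the theta constants $\vartheta_{m,\nu}(\tau)$; and the trivialization of $\cW_{1,(m)}$ recorded in Thm.~\ref{theorem:explicitWeilRep} is, again by construction, the one compatible with this Schr\"odinger model. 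Hence the trivialization $\theta_{\mathrm{null},(m)}$ intertwines the theta group action with the Schr\"odinger action, and $\iota_{\rm an}$ is $G(\delta)$-equivariant as well.

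\emph{Steps 2 and 3: from equivariance to $\pm 1$.} By Thm.~\ref{theorem:SVN} applied with base the Stein manifold $\mathfrak h$, the Schr\"odinger representation $\cV(\delta)$ is an irreducible weight-one $G(\delta)$-module with $\Hom_{G(\delta)}(\cV(\delta),\cV(\delta))=\cO_{\mathfrak h}$, so $\iota_{\rm an}\circ\iota_{\mathrm{alg},\CC}^{-1}$ is multiplication by a unique $f\in\Gamma(\mathfrak h,\cO^{\times}_{\mathfrak h})$. Both isomorphisms descend along $\pr$: $\iota_{\rm an}$ to $\sA^{\rm an}_{1,1}$ by the classical transformation laws quoted above, and $\iota_{\mathrm{alg},\CC}$ to the analytification $[\Gamma(m,2m)\backslash\mathfrak h]$ of $\sA_{1,(m)}(\Theta,(m))$ (Prop.~\ref{proposition:thetaStructureModuliStack}) since it arises from an isomorphism over that stack; hence $f$ is $\Gamma(m,2m)$-invariant and descends to a holomorphic function on the open modular curve $\Gamma(m,2m)\backslash\mathfrak h$. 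Finally $f$ extends holomorphically over the cusps: the components $\vartheta_{m,\nu}(\tau)$ have $q$-expansions with constant term $1$ for $\nu=0$ and $0$ otherwise, the entries of the Weil representation are constants, and the algebraic isomorphism of Thm.~\ref{theorem:IdealTheorem} extends over the boundary of the Mumford compactification (the Tate curve), where the trivializing section $\vartheta(\tau)$ of $\cJ_{g,1}^{-1}$ also has constant term $1$. Thus $f$ extends to a holomorphic function on the compact modular curve, hence $f\equiv c$ is constant; comparing the constant terms of the two normalizations at the cusp $\tau\to i\infty$ — equivalently, comparing the algebraic vector of theta constants $\theta_{\mathrm{null},g,\delta}$ with the analytic one, which agree up to the $\pm1$-ambiguity inherited from \cite{C1}, Thm.~5.1 — forces $c^2=1$, i.e. $c=\pm1$.

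\emph{Main obstacle.} The one substantive step is Step~1: verifying that the concretely defined $\iota_{\rm an}$, assembled from the two ``coordinate'' trivializations (theta constants on one side, the standard matrices of the Weil representation on the other), really is equivariant for the theta group action. This is the classical compatibility between the Heisenberg action on theta functions and the normalization of the Weil representation, and the delicate point is matching the conventions of Thm.~\ref{theorem:explicitWeilRep} with those of \cite{Mumford:EqAbv2}, \S6 that underlie Thm.~\ref{theorem:IdealTheorem}. Once this equivariance is established, Steps~2 and~3 are essentially formal, the only mild care being the extension of the algebraic isomorphism over the boundary.
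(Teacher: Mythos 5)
Your core reduction is the same as the paper's: both proofs use the Stone--von Neumann theorem (Thm.~\ref{theorem:SVN}) to observe that $G(\cL_m)$-equivariant isomorphisms between $\cW_{1,(m)}\otimes\un{\omega}^{1/2}$ and $\cJ^{\vee}_{1,(m)}$ are controlled by isomorphisms between the rank-one spaces of $\sigma H$-invariants, i.e.\ between $\un{\omega}^{1/2}_{\Theta}$ and $\cJ^{\vee}_{1,1}$. You also correctly isolate the one substantive computation, namely the equivariance of $\iota_{\rm an}$ for the theta-group action in the Schr\"odinger model: this is exactly the paper's ``quick computation with theta functions.'' Where you diverge is the endgame. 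The paper uses that computation to identify $\iota_{\rm an}$, under the Stone--von Neumann bijection, with the specific degree-one isomorphism induced by the functional equation of $\vartheta(\tau)$, and then simply cites \cite{C1}, \S 6.1, where that isomorphism was already shown to agree with the algebraic one up to $\pm 1$. You instead leave the identification implicit and argue that the ratio $f=\iota_{\rm an}\circ\iota_{\mathrm{alg},\CC}^{-1}$ is an invertible function on the open modular curve $\Gamma(m,2m)\backslash\mathfrak h$ which extends over the cusps and is therefore constant, then evaluate at $i\infty$. You are right that the descent step alone does not suffice (open modular curves have plenty of non-constant units), so the cusp extension is genuinely needed in your route.

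Two points in your final paragraph need repair. First, the assertion that ``the algebraic isomorphism of Thm.~\ref{theorem:IdealTheorem} extends over the boundary of the Mumford compactification'' is doing real work and is not available in the paper's framework: the paper explicitly defers all compactification/growth-condition issues, and justifying the $q$-expansion of the \emph{algebraic} theta-null and of the key-formula isomorphism at the Tate curve requires the theory of degenerating abelian schemes, which is precisely what the paper's citation of \cite{C1}, \S 6.1 lets it avoid. Second, the sentence ``comparing the algebraic vector of theta constants with the analytic one, which agree up to the $\pm1$-ambiguity inherited from \cite{C1}, Thm.~5.1 --- forces $c^2=1$'' is circular as stated: the agreement of the two theta-null vectors up to sign is essentially the statement being proved (in the $\delta_0$-component), while the $\pm1$ in \cite{C1}, Thm.~5.1 is an ambiguity in the \emph{definition} of $\iota_{\rm alg}$, not a previously established comparison. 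To close your argument you would need an actual computation of the constant term of the algebraic side at the cusp; alternatively, follow the paper and convert your Step~1 computation into the identification of $\iota_{\rm an}$ with the functional-equation isomorphism, at which point \cite{C1}, \S 6.1 finishes the proof without any compactification.
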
 

\begin{proof}
As in the proof of Thm. \ref{theorem:IdealTheorem},  we have that the $G(\cL_m)$-invariant isomorphisms between $\cW_{1,(m)}\otimes\un{\omega}^{1/2}$ and $\cJ^{\vee}_{1,(m)}$ are in bijection with the isomorphisms between $\un{\omega}_{\Theta}^{1/2}$ and  $\cJ^{\vee}_{1,1}$ (this is how $\iota_{\rm{alg}}$ is defined in Thm. \ref{theorem:IdealTheorem}). A quick computation with theta functions then shows that the isomorphism $\iota_{\rm{an}}$ is the one induced by the functional equation of the Riemann-Jacobi theta function $\vartheta(\tau) = \sum_{n\in\ZZ} e^{\pi i n^2\tau}$, as in \cite{C1}, \S 6.1, so the result follows from the analogous result in \em{loc. cit.} 
\end{proof}

\subsection{Jacobi forms and determinants} In this final section we briefly discuss two further applications of Thm. \ref{theorem:IdealTheorem}. The first interesting application is to the theory of Jacobi forms. As shown in \cite{Kramer}, \S 2, the class of $\cL_m$ in $\Pic(\sE(1))$ can be represented by the cocycle
$$
[\smalltwobytwo{a}{b}{c}{d}, \lambda = \lambda_1\tau + \lambda_2]\mapsto e^{2\pi i m \left(\lambda_1^2\tau + 2\lambda_1 z - \frac{c(z + \lambda_1\tau + \lambda_2)^2}{c\tau + d}\right)}.
$$
The global sections of $\cL_m\otimes\left(\Omega^1_{\sE(1)/\sA^{\rm{an}}_{1,1}}\right)^{\otimes k}$ over $\sE(1)$ can thus be identified with {\em Jacobi forms} of index $m/2$ and weight $k\in\ZZ$. Taking global sections of the isomorphism of \eqref{equation:algebraicEichlerZagier} and tensoring with $\CC$ then specializes to the following well-known theorem of Eichler-Zagier (\cite{EichlerZagier}, Thm. 5.1): 

\begin{corollary}
There is a canonical isomorphism between index $m/2$, weight $k$ Jacobi forms and $\cW^{\vee}_{1,(m)}$-valued modular forms of weight $k-1/2$ (Def. \ref{definition:vvaluedModularForms}).  
\end{corollary}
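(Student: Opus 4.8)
The plan is to obtain the stated identification from the isomorphism \eqref{equation:algebraicEichlerZagier} by pushing forward along the universal elliptic curve and unwinding definitions. First I would specialize \eqref{equation:algebraicEichlerZagier} to $g=1$, $\delta=(m)$. Recalling $\cW_{1,(m)} = \cV((m))^{\vee}\otimes\Des^*\cM_{\Theta}^{1/2}$, the left-hand side of \eqref{equation:algebraicEichlerZagier} is precisely $\cW_{1,(m)}^{\vee}$, so \eqref{equation:algebraicEichlerZagier} reads
$$
\cW_{1,(m)}^{\vee} \stackrel{\simeq}\longrightarrow \cJ_{1,(m)}\otimes\un{\omega}^{1/2}
$$
over the metaplectic stack $\sA^{\rm{an}}_{1,1/2} = [\Mp_2(\ZZ)\backslash\mathfrak{h}]$, canonically up to $\pm 1$. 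Tensoring by $\un{\omega}^{(2k-1)/2}$ yields a canonical (up to sign) isomorphism $\cW_{1,(m)}^{\vee}\otimes\un{\omega}^{(2k-1)/2}\simeq\cJ_{1,(m)}\otimes\un{\omega}^{k}$. The left-hand side, by Def. \ref{definition:vvaluedModularForms} applied to the dual bundle, has for global sections exactly the $\cW_{1,(m)}^{\vee}$-valued modular forms of weight $k-1/2$; the right-hand side is pulled back from $\sA^{\rm{an}}_{1,1}$ (both $\cJ_{1,(m)}=\pi^{\rm{un}}_*\cL_m$ and $\un{\omega}^{k}$ are defined there), so the space of such forms is canonically $H^0(\sA^{\rm{an}}_{1,1},\cJ_{1,(m)}\otimes\un{\omega}^{k})$.

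Next I would identify the same group with Jacobi forms. For $g=1$ the relative differentials satisfy $\Omega^1_{\sE(1)/\sA^{\rm{an}}_{1,1}} = \pi^{\rm{un}*}\un{\omega}$, so the projection formula gives
$$
\pi^{\rm{un}}_*\!\left(\cL_m\otimes(\Omega^1_{\sE(1)/\sA^{\rm{an}}_{1,1}})^{\otimes k}\right) \;=\; \cJ_{1,(m)}\otimes\un{\omega}^{k}.
$$
Taking global sections and using $H^0(\sE(1),\cF) = H^0(\sA^{\rm{an}}_{1,1},\pi^{\rm{un}}_*\cF)$, the space of Jacobi forms of index $m/2$ and weight $k$ is canonically $H^0(\sA^{\rm{an}}_{1,1},\cJ_{1,(m)}\otimes\un{\omega}^{k})$. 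Combining the two paragraphs produces the asserted canonical isomorphism; equivalently, one takes $H^0$ of \eqref{equation:algebraicEichlerZagier} tensored with $\un{\omega}^{(2k-1)/2}$ and with $\CC$.

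Finally, to recognize this as the Eichler--Zagier isomorphism of \cite{EichlerZagier}, Thm. 5.1, I would pull everything back along $\pr:\mathfrak{h}\to\sA^{\rm{an}}_{1,1}$ and substitute the explicit trivializations already on record: the theta-constant trivialization $\theta_{\mathrm{null},(m)}$ of $\pr^*\cJ_{1,(m)}^{\vee}$, the Weil-representation trivialization of $\pr^*\cW_{1,(m)}$ of Thm. \ref{theorem:explicitWeilRep}, and the Kramer cocycle for $\cL_m$ over $\sE(1)$ quoted above. Under these, the isomorphism \eqref{equation:algebraicEichlerZagier} becomes coordinate-wise the classical theta decomposition $\phi(\tau,z)=\sum_{\nu\in\ZZ/m\ZZ} h_\nu(\tau)\,\vartheta_{m,\nu}(\tau,z)$ of a Jacobi form into its theta components, which is exactly Eichler--Zagier's map. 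I expect the main obstacle to be bookkeeping rather than substance: keeping straight which bundle lives over $\sA^{\rm{an}}_{1,1}$ versus over the metaplectic cover, matching the half-integral weight shift $k\mapsto k-1/2$ and the index normalization $m/2$ to the conventions of \cite{EichlerZagier}, and checking that the descent of $\cW_{1,(m)}^{\vee}\otimes\un{\omega}^{(2k-1)/2}$ to $\sA^{\rm{an}}_{1,1}$ is compatible with that of $\cJ_{1,(m)}\otimes\un{\omega}^{k}$, so that no spurious metaplectic sign is introduced beyond the $\pm 1$ already present in Thm. \ref{theorem:IdealTheorem}.
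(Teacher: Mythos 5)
Your proposal is correct and follows essentially the same route as the paper: the paper likewise identifies Jacobi forms of index $m/2$ and weight $k$ with global sections of $\cL_m\otimes(\Omega^1_{\sE(1)/\sA^{\rm{an}}_{1,1}})^{\otimes k}$, recognizes the left-hand side of \eqref{equation:algebraicEichlerZagier} as $\cW_{1,(m)}^{\vee}$, and takes global sections after twisting by the appropriate power of $\un{\omega}$. Your version merely makes explicit the projection-formula step and the weight bookkeeping that the paper leaves implicit.
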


Thm. \ref{theorem:IdealTheorem}, and more precisely \eqref{equation:algebraicEichlerZagier}, can thus be viewed as a generalization of the Eichler-Zagier isomorphism in two directions: first, it generalizes to Jacobi forms of any type (index) and degree; second, it is purely algebraic. Thm. \ref{theorem:IdealTheorem} also has applications to the theory of determinants of abelian schemes. Indeed, taking determinants of both sides of \eqref{equation:algebraicEichlerZagier}, we get
$$
\det \cW^{\vee}_{g,\delta} \simeq \det \cJ_{g,\delta}\otimes\un{\omega}^{-d/2}.
$$ 
Note that since $d$ is even, both determinants are actually defined over the stack $\sA_{g,\delta}$, and not just over the metaplectic stack. Moreover we recognize the square of the right-hand side as the {\em determinant bundle} (\cite{MoretBailly:PinceauxAbv}, VIII.1.0.5, \cite{FaltingsChai}, \S 5)
$$
\det \cJ_{g,\delta}^{\otimes 2}\otimes\un{\omega}^{-d} =: \Delta_{g,\delta}
$$
over the moduli stack $\sA_{g,\delta}$. In particular, we have
\begin{corollary}
The order of $\Delta_{g,\delta}$ in $\Pic(\sA_{g,\delta})$ is the same as the order of $\left(\det \cW^{\vee}_{g,\delta}\right)^{\otimes 2}$. 
\end{corollary}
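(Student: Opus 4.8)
This is an immediate corollary of Thm.~\ref{theorem:IdealTheorem}, so the plan is merely to take determinants and keep track of the half-integral twist. First I would rewrite the dual of \eqref{equation:algebraicEichlerZagier}: since $\cW_{g,\delta}^{\vee}=\cV(\delta)\otimes\Des^{*}\cM_{\Theta}^{-1/2}$, that isomorphism reads
\[
\cW_{g,\delta}^{\vee}\;\stackrel{\simeq}{\longrightarrow}\;\cJ_{g,\delta}\otimes\un{\omega}^{1/2}
\]
over the metaplectic stack $\sA_{g,\delta,1/2}$. Taking top exterior powers of these rank-$d$ bundles, using the identity $\det(\cA\otimes\cN)\simeq\det\cA\otimes\cN^{\otimes d}$ for a line bundle $\cN$ together with the canonical trivialization $\det\cV(\delta)\simeq\cO$ afforded by the canonical basis of delta functions of the Schr\"{o}dinger representation (Def.~\ref{definition:SchrodingerRep}), I obtain exactly the isomorphism $\det\cW_{g,\delta}^{\vee}\simeq\det\cJ_{g,\delta}\otimes\un{\omega}^{-d/2}$ already recorded above.

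Next I would square this isomorphism. Because $d$ is even, $\un{\omega}^{-d}=(\un{\omega}^{-1})^{\otimes d/2}$ is an honest line bundle on $\sA_{g,\delta}$, so that
\[
\left(\det\cW_{g,\delta}^{\vee}\right)^{\otimes 2}\;\simeq\;\left(\det\cJ_{g,\delta}\right)^{\otimes 2}\otimes\un{\omega}^{-d}\;=\;\Delta_{g,\delta},
\]
the right-hand side being a line bundle on $\sA_{g,\delta}$ and the left-hand side its pullback along $p_{1/2}\colon\sA_{g,\delta,1/2}\to\sA_{g,\delta}$ (equivalently, as observed above, the half-integral ambiguity already cancels in $\det\cW_{g,\delta}^{\vee}$, so both sides descend to $\sA_{g,\delta}$ and the comparison takes place there).

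Finally I would deduce the equality of orders. It suffices to know that $p_{1/2}^{*}\colon\Pic(\sA_{g,\delta})\to\Pic(\sA_{g,\delta,1/2})$ is injective: since $\sA_{g,\delta,1/2}\to\sA_{g,\delta}$ is a $\mu_{2}$-gerbe one has $p_{1/2,*}\cO_{\sA_{g,\delta,1/2}}=\cO_{\sA_{g,\delta}}$, and the projection formula then gives $p_{1/2,*}p_{1/2}^{*}\cN\simeq\cN$ for every line bundle $\cN$; hence $\Delta_{g,\delta}^{\otimes n}$ is trivial on $\sA_{g,\delta}$ if and only if $p_{1/2}^{*}\Delta_{g,\delta}^{\otimes n}\simeq\left(\det\cW_{g,\delta}^{\vee}\right)^{\otimes 2n}$ is trivial on the metaplectic stack, so the two orders coincide (both finite of the same value, or both infinite). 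There is no genuine obstacle here: all the substance is in Thm.~\ref{theorem:IdealTheorem}, and the only point deserving a line of care is the observation that $d$ even is exactly what makes the half-form twist square to a bundle defined over $\sA_{g,\delta}$ — which is why the corollary speaks of $\left(\det\cW_{g,\delta}^{\vee}\right)^{\otimes 2}$ rather than of $\det\cW_{g,\delta}^{\vee}$ itself.
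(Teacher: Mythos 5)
Your proposal is correct and follows essentially the same route as the paper: take determinants of \eqref{equation:algebraicEichlerZagier}, use the canonical triviality of $\det\cV(\delta)$ to get $\det\cW_{g,\delta}^{\vee}\simeq\det\cJ_{g,\delta}\otimes\un{\omega}^{-d/2}$, square, and observe that evenness of $d$ makes both sides descend to $\sA_{g,\delta}$, where the square of the right-hand side is $\Delta_{g,\delta}$ by definition. Your extra remark on the injectivity of $p_{1/2}^{*}$ on Picard groups via $p_{1/2,*}\cO=\cO$ is a small, harmless addition of care that the paper leaves implicit.
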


The question of finding the precise order of $\Delta_{g,\delta}$ has been studied in \cite{FaltingsChai}, Thm. 5.1, \cite{Polishchuck:Determinants}, \cite{Kouvidakis}  and \cite{Maillot-Rossler}. In particular, our approach to the question can be viewed as the algebraic version of \cite{Kouvidakis}, where the transformation laws of theta functions are employed to compute the order of $\Delta_{g,\delta}$.

\renewcommand\refname{References}

\bibliographystyle{plain}
\bibliography{thetaPaper}

\end{document}